\documentclass[11pt,final]{amsart}

\usepackage{booktabs}
\usepackage[toc,page]{appendix}
\usepackage{enumerate}
\usepackage{mathrsfs}
\usepackage{fancyhdr}
\usepackage{latexsym, amsfonts, amssymb, amsmath,  amsthm}
\usepackage{amsopn, amstext, amscd,pifont}
\usepackage{amssymb,bm, cite}
\usepackage[all]{xy}
\usepackage{color}
\usepackage{graphicx,hyperref}
\usepackage{float}
\usepackage{listings}
\usepackage[T1]{fontenc}

\usepackage{manfnt}

\usepackage{xcolor}
\hypersetup{
    colorlinks, linkcolor={red!80!black},
    citecolor={blue!80!black}, urlcolor={blue}
}
\usepackage[color]{showkeys}
\definecolor{refkey}{gray}{.75}
\definecolor{labelkey}{gray}{.75}

\usepackage{graphicx,pgf}
\usepackage{float}

\usepackage{caption}
\usepackage{subcaption}

\newtheorem{thm}{Theorem}[section]
\newtheorem{lem}[thm]{Lemma}
\newtheorem{pro}[thm]{Proposition}

\newtheorem{cor}[thm]{Corollary}

\newtheorem{rem}[thm]{Remark}

\numberwithin{equation}{section}


%
%
%


\numberwithin{equation}{section}
\numberwithin{figure}{section}

\setlength{\hoffset}{-.5in}
\setlength{\textwidth}{6.0in}

\setlength{\headheight}{15.2pt}
\pagestyle{fancy}
\fancyhf{}

\let\oldtocsection=\tocsection
\let\oldtocsubsection=\tocsubsection
\let\oldtocsubsubsection=\tocsubsubsection
\renewcommand{\tocsection}[2]{\hspace{0em}\oldtocsection{#1}{#2}}
\renewcommand{\tocsubsection}[2]{\hspace{2em}\oldtocsubsection{#1}{#2}}
\renewcommand{\tocsubsubsection}[2]{\hspace{4.5em}\oldtocsubsubsection{#1}{#2}}



\fancyhead[LE,RO]{\thepage}


\allowdisplaybreaks

\begin{document}
\title{Scalar Green function bounds for instantaneous shock location and one-dimensional stability of viscous shock waves}
\author{Yingwei Li}
\address{Yingwei Li, Indiana University, 831 East Third Street, Rawles Hall, Bloomington, Indiana 47405, U.S.A.}
\email{YL37@umail.iu.edu}
\date{\today}

\begin{abstract}
In this paper, we investigate and prove the nonlinear stability of viscous shock wave solutions of a scalar viscous conservation law \eqref{0.1}, using the methods developed for general systems of conservation laws by Howard, Mascia, Zumbrun and others, based on instantaneous tracking of the location of the perturbed viscous shock wave. In some sense, this paper extends the treatment in a previous expository work of Zumbrun ["Instantaneous shock location ..."] on Burgers equation to the general case, giving an exposition of these methods in the simplest setting of scalar equations. In particular we give by a rescaling argument a simple treatment of nonlinear stability in the small-amplitude case.
\end{abstract}

\maketitle



\section{Introduction}

The problem we consider here is the nonlinear stability of viscous shock wave solutions of scalar viscous conservation laws. With full details, we use the methods developed for general systems of conservation laws in \cite{ZH,MaZ2,MaZ3} in the simplest setting of a scalar conservation law \eqref{0.1}. We also extend these methods to the small-amplitude limiting case, recovering by a simple rescaling argument uniform bounds obtained by Liu and Zeng (\cite{LZe}, system case). This is also in a sense a sequel to the paper \cite{Z1} in which these techniques were exposed for Burgers equations. The work \cite{Z1} focused on the nonlinear stability argument and in particular the method of approximately locating the shock profile, restricting to Burgers equation for which explicit linearized Green function bounds are available through the Hopf-Cole transform. The current paper effectively completes the exposition by showing how similar Green function bounds can be obtained for general scalar equations by the use of inverse Laplace transform and complex stationary phase estimates.

Our treatment extends the study of scalar shock stability in \cite{Ho} by related techniques.  The main new aspects here are the formulation of the resolvent kernel formula in terms of dual and forward modes in a way generalizing conveniently to the system case, and improved bounds on ``excited'' and $y$-derivative terms sufficient to close the ``tracking-'' type  argument that has evolved to treat nonlinear stability in the system case.  These modifications allow the treatment of more general data (merely $L^1\cap L^\infty$ as compared to algebraically decaying data in $x$) than was treated in \cite{Ho}, and, in particular, the sharp treatment of the small-amplitude limit.

\subsection{Problem and equations}

We consider the \textit{scalar viscous conservation law}
\begin{equation} \label{0.1}
u_t + f(u)_x = u_{xx},
\end{equation}
where $u \in C^2(\mathbb{R}^2 \to \mathbb{R}), f \in C^2(\mathbb{R} \to \mathbb{R})$, $u=u(x,t)$.

Let us consider the standing wave solution
$\bar{u} = \bar{u}(x), \lim_{x \to \pm \infty} \bar{u}(x) = \bar{u}_{\pm}$
satisfying the Lax condition
$f^{\prime}(\bar{u}_+)<0<f^{\prime}(\bar{u}_-)$.
Thus $\bar{u}$ satisfies
\begin{eqnarray} \label{0.3}
f(\bar{u}(x))_x = \bar{u}_{xx}
\end{eqnarray}
or equivalently $f^{\prime}(\bar{u})\bar{u}_x = \bar{u}_{xx}$.
Linearizing \eqref{0.1}, about the solution $\bar{u}$, we obtain
\begin{eqnarray} \label{1.1}
v_t=Lv:=v_{xx}-(f^{\prime}(\bar{u})v)_x,\quad v(x,0)=g(x).
\end{eqnarray}
We call this the \textit{homogeneous linearized equation}.

\subsection{Main Result I: Pointwise Green Function Bounds}

Equation \eqref{1.1} can be solved explicitly by Hopf-Cole transformation, to give an exact solution formula
\begin{equation*}
e^{Lt}g=\int_{-\infty}^{+\infty}G(x,t;y)g(y)dy
\end{equation*}
where $G(x,t;y)$ is the Green function associated with the linearized evolution equation \eqref{1.1}.
Following \cite{MaZ} and \cite{ZH}, $G(x,t;y)$ has the following representation.
\begin{equation*}
\begin{aligned}
G(x,t;y)=\frac{1}{2\pi i}\mathrm{P.V.}\int_{\eta-i\infty}^{\eta+i\infty}e^{\lambda t}G_{\lambda}(x,y)d\lambda
        =\frac{1}{2\pi i}\lim_{T \to \infty}\int_{\eta-iT}^{\eta+iT}e^{\lambda t}G_{\lambda}(x,y)d\lambda
\end{aligned}
\end{equation*}
which is valid for $\eta$ sufficiently large.

\begin{thm}[Pointwise Green function bounds] \label{greenfunctionbounds}
Assuming the Lax condition, the Green function $G(x,t;y)$ associated with the linearized evolution equation \eqref{1.1} may be decomposed as
 \begin{equation*}
 G(x,t;y)=E(x,t;y)+S(x,t;y)+R(x,t;y)
 \end{equation*}
where for $y \leq 0:$
 \begin{equation} \label{8.4}
 E(x,t;y)=C\bar{u}^{\prime}(x)\left(\mathrm{errfn}\left(\frac{y+f^{\prime}(\bar{u}_{-})t}{\sqrt{4t}}\right)
 -\mathrm{errfn}\left(\frac{y-f^{\prime}(\bar{u}_{-})t}{\sqrt{4t}}\right)\right),
 \end{equation}
 \begin{equation} \label{8.5}
 \begin{aligned}
 S(x,t;y)
 =(4\pi t)^{-\frac{1}{2}}e^{-\frac{\left(x-y-f^{\prime}(\bar{u}_{-})t\right)^2}{4t}}\left(\frac{e^{-x}}{e^x+e^{-x}}\right)
 +(4\pi t)^{-\frac{1}{2}}e^{-\frac{\left(x-y+f^{\prime}(\bar{u}_{-})t\right)^2}{4t}}\left(\frac{e^{x}}{e^x+e^{-x}}\right),
 \end{aligned}
 \end{equation}
 \begin{equation} \label{8.6}
 R(x,t;y)
 =\mathbf{O}\left(e^{-\eta t}e^{-\frac{|x-y|^2}{Mt}}\right)
 +\mathbf{O}\left((t+1)^{-\frac{1}{2}}e^{-\eta x^+}+e^{-\eta |x|}\right)t^{-\frac{1}{2}}e^{-\frac{(x-y-f^{\prime}(\bar{u}_{-})t)^2}{Mt}},
 \end{equation}
 \begin{equation}
 R_{y}(x,t;y)
 =\mathbf{O}\left(e^{-\eta t}e^{-\frac{|x-y|^2}{Mt}}\right)
 +\mathbf{O}\left((t+1)^{-\frac{1}{2}}e^{-\eta x^+}+e^{-\eta |x|}\right)t^{-1}e^{-\frac{(x-y-f^{\prime}(\bar{u}_{-})t)^2}{Mt}}.
 \end{equation}
\end{thm}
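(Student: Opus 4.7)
The plan is to build $G(x,t;y)$ from its resolvent kernel $G_\lambda(x,y)$ via the inverse Laplace representation already displayed in the excerpt, then deform the contour and extract the various pieces by low-frequency (pole/expansion) and large-frequency (saddle point) analysis. First I would construct $G_\lambda$ explicitly. Since \eqref{1.1} is a second-order ODE in $x$ for each fixed $\lambda$, the resolvent equation $(L-\lambda)G_\lambda = -\delta_y$ can be solved by choosing two linearly independent homogeneous solutions: a ``forward'' solution $\phi_+(x;\lambda)$ decaying at $+\infty$ and a ``dual'' solution $\tilde \phi_-(x;\lambda)$ decaying at $-\infty$, and gluing them across $x=y$ using the Wronskian to get the jump in the derivative. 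The resulting kernel can be written symmetrically as $G_\lambda(x,y)= W(\lambda)^{-1}\phi_\pm(x;\lambda)\tilde\phi_\mp(y;\lambda)$ according to the ordering of $x$ and $y$, a form which generalizes cleanly to systems and is one of the technical improvements emphasized in the introduction.

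Next I would study the $\lambda$-dependence. On compact $\lambda$-sets away from the origin, standard ODE perturbation theory gives $\phi_+$ and $\tilde\phi_-$ analytically in $\lambda$ with asymptotic behavior governed by the characteristic roots $\mu^\pm_\pm(\lambda)=\tfrac12\bigl(f'(\bar u_\pm)\pm\sqrt{f'(\bar u_\pm)^2+4\lambda}\bigr)$ of the constant-coefficient problem at $\pm\infty$. The translation eigenfunction $\bar u'$ produces a simple zero of $W(\lambda)$ at $\lambda=0$, so $G_\lambda$ has a simple pole there whose residue factors as a constant times $\bar u'(x)$ times a dual mode in $y$. Deforming the contour in the inverse Laplace integral leftward to a parabolic path $\Gamma$ passing through $\lambda=0$, I would split
\[
G(x,t;y)=\mathop{\mathrm{Res}}_{\lambda=0}\bigl(e^{\lambda t}G_\lambda\bigr)\cdot[\text{errfn cutoff}] + \frac{1}{2\pi i}\int_\Gamma e^{\lambda t}G_\lambda(x,y)\,d\lambda,
\]
where the error function factor in \eqref{8.4} arises from completing the residue contribution against the part of the contour which crosses $\{\mathrm{Re}\,\lambda=0\}$; this is exactly the ``instantaneous shock location'' mechanism and produces $E(x,t;y)$.

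The remaining contour integral splits further into a bounded-$\lambda$ (low frequency) piece and a large-$\lambda$ (high frequency) piece. For the low-frequency piece I would expand $\mu^\pm_\pm(\lambda)$ about $\lambda=0$, isolate the leading convective-diffusive exponential $e^{\mu x-\mu y+\lambda t}$ on each side of the shock, and apply complex stationary phase / saddle point in $\lambda$ along $\Gamma$; the saddle occurs at $\lambda=\lambda_*(x-y,t)$, producing the Gaussian factor $(4\pi t)^{-1/2}e^{-(x-y-f'(\bar u_-)t)^2/4t}$ in the first term of \eqref{8.5}, while the reflected mode across the shock produces the second term, with the $e^{\pm x}/(e^x+e^{-x})$ weights emerging from the limiting values at $\pm\infty$ of the transmitted/reflected forward modes (normalized so that the profile $\bar u'$ is recovered in the residue). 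Remaining corrections, together with $y$-derivatives of the boundary layer factors near the shock, form the $R$ and $R_y$ bounds in \eqref{8.6}. The high-frequency part is handled by closing the contour into the right half-plane and using standard parabolic resolvent estimates $\|G_\lambda\|\lesssim |\lambda|^{-1/2}e^{-\sqrt{\lambda}|x-y|}$; this contributes the short-time Gaussian term $\mathbf{O}\bigl(e^{-\eta t}e^{-|x-y|^2/(Mt)}\bigr)$.

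The main obstacle is the saddle point analysis: one must track the two characteristic roots and their analytic continuations across the essential spectrum, pick the correct contour $\Gamma$ (a parabola tangent to the essential spectrum at $\lambda=0$), verify uniform bounds on the non-Gaussian prefactors, and account for the shock's boundary layer — this is where the extra factors $(t+1)^{-1/2}e^{-\eta x^+}+e^{-\eta|x|}$ in $R$ and $R_y$ come from, reflecting localized corrections to the constant-coefficient modes. The $R_y$ bound follows by differentiating the integrand in $y$, which on the Gaussian saddle produces an extra factor of $\mu^-_-(\lambda_*)=O(t^{-1/2})$ on the relevant piece, while the exponentially localized boundary-layer parts lose an $\eta$-independent constant only; this accounts for the $t^{-1/2} \to t^{-1}$ change between the last terms of the $R$ and $R_y$ bounds.
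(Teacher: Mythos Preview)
Your proposal is correct and follows essentially the same route as the paper: construct $G_\lambda$ from forward and dual decaying modes, identify the simple pole at $\lambda=0$ and extract it as the $E$ term via a residue plus an errfn-producing principal-value contour integral, then apply saddle-point/Fourier evaluation to the low-frequency resolvent expansion (the paper's Proposition~\ref{Pro6.5}) to produce $S$ and $R$, with the high-frequency resolvent bounds (Proposition~\ref{Pro7.1}) supplying the short-time Gaussian. One small clarification: the weights $e^{\pm x}/(e^x+e^{-x})$ in \eqref{8.5} are not read off from the transmitted/reflected modes as you suggest, but are inserted by hand as smooth cutoffs interpolating between the regions $x\leq 0$ and $x\geq 0$, the discrepancy being absorbable in $R$ since $1-\tfrac{e^{-x}}{e^x+e^{-x}}\leq e^{-\alpha|x|}$ for $x\leq 0$.
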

Please refer to section \ref{sectiongreenfunctionbounds} for more detailed discussion.

\subsection{Main Result II: Nonlinear Stability of Viscous Shock Solutions}

With the Green function bounds established, we can deduce the nonlinearly orbitally asymptotically stability of the viscous
shock solutions, i.e., a perturbation $\tilde{u}=\bar{u}+u$ remains close to a set of translates of $\bar{u}$. The translation,
$\alpha(t)$, will be defined later on.

\begin{thm}[Stability of viscous shock solutions] \label{mainthm}
Viscous shock solutions $\bar{u}(x)$ of \eqref{0.1} are nonlinearly stable in $L^1\cap L^{\infty}$ and nonlinearly orbitally
asymptotically stable in $L^p$, $p>1$, with respect to initial perturbations $u_0$ that are sufficiently small in
$L^1\cap L^{\infty}$. More precisely, for some $C>0$ and $\alpha \in W^{1,\infty}(t)$,
 \begin{eqnarray*}
 |\tilde{u}(x,t)-\bar{u}(x-\alpha(t))|_{L^p(x)}&\leq&C(1+t)^{-\frac{1}{2}(1-\frac{1}{p})}
 |\tilde{u}-\bar{u}|_{L^1\cap L^{\infty}}|_{t=0},\\
 |\dot{\alpha}(t)|&\leq&C(1+t)^{-\frac{1}{2}}|\tilde{u}-\bar{u}|_{L^1\cap L^{\infty}}|_{t=0},\\
 |\alpha(t)|&\leq&C|\tilde{u}-\bar{u}|_{L^1\cap L^{\infty}}|_{t=0},\\
 |\tilde{u}-\bar{u}|_{L^1\cap L^{\infty}}(t)&\leq&C|\tilde{u}-\bar{u}|_{L^1\cap L^{\infty}}|_{t=0},
 \end{eqnarray*}
for all $t\geq 0$, $1\leq p\leq\infty$, for solutions $\tilde{u}$ of \eqref{0.1} with
$|\tilde{u}-\bar{u}|_{L^1\cap L^{\infty}}|_{t=0}$ sufficiently small.
\end{thm}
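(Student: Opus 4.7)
The plan is to combine Duhamel's principle with the Green function decomposition of Theorem~\ref{greenfunctionbounds} and close a nonlinear iteration by continuous induction, along the lines of \cite{MaZ2,Z1}. Write $u:=\tilde u-\bar u$ and $Q(u):=f(\bar u+u)-f(\bar u)-f'(\bar u)u$, so that $|Q(u)|\leq C|u|^2$ and the perturbation equation reads $u_t-Lu=-Q(u)_x$; integrating by parts in $y$, Duhamel's formula becomes
\[
u(x,t)=\int G(x,t;y)u_0(y)\,dy+\int_0^t\!\!\int G_y(x,t-s;y)\,Q(u(y,s))\,dy\,ds.
\]

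Because the excited kernel $E(x,t;y)$ in \eqref{8.4} factors as $\bar u'(x)$ times a function $e(y,t)$, I define the shift
\[
\alpha(t):=\int e(y,t)u_0(y)\,dy+\int_0^t\!\!\int e_y(y,t-s)\,Q(u(y,s))\,dy\,ds
\]
(absorbing the constant $C$ into $e$), so that $v:=u-\alpha\bar u'$ is given by the same Duhamel identity with $E$ replaced by $S+R$. Integrating against the convected-Gaussian form of $S$ in \eqref{8.5} and the remainder bounds \eqref{8.6} on $R$ and $R_y$ then gives the linear estimates
\[
|v_{\mathrm{lin}}(t)|_{L^p}\leq C(1+t)^{-\tfrac12(1-\tfrac1p)}|u_0|_{L^1\cap L^p},\quad |\alpha_{\mathrm{lin}}(t)|\leq C|u_0|_{L^1},\quad |\dot\alpha_{\mathrm{lin}}(t)|\leq C(1+t)^{-\tfrac12}|u_0|_{L^1},
\]
with an extra $(t-s)^{-1/2}$ factor on the nonlinear integrals coming from the $y$-derivative bound.

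Setting $\varepsilon:=|\tilde u-\bar u|_{L^1\cap L^\infty}|_{t=0}$, I introduce the continuous-induction variable
\[
\zeta(t):=\varepsilon^{-1}\sup_{0\leq s\leq t}\!\Big\{(1+s)^{\tfrac12}|v(s)|_{L^\infty}+|v(s)|_{L^1}+|\alpha(s)|+(1+s)^{\tfrac12}|\dot\alpha(s)|\Big\},
\]
substitute the bootstrap bound $|u(y,s)|\leq C\varepsilon\zeta(t)\bigl((1+s)^{-1/2}+|\bar u'(y)|\bigr)$ into $|Q(u)|\leq C|u|^2$, and estimate each resulting integral by Theorem~\ref{greenfunctionbounds}. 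The fact that both the kernel and the pointwise $u$-template are anchored on the outgoing characteristic $y+f'(\bar u_-)t=\mathrm{const}$ allows the time-convolution to reproduce the ambient decay rate, and yields a closed inequality of the schematic form $\zeta(t)\leq C_0+C_1\varepsilon\,\zeta(t)^2$; for $\varepsilon$ sufficiently small this forces $\zeta(t)\leq 2C_0$ for all $t$. Intermediate $L^p$, $1<p<\infty$, bounds follow by $L^1$--$L^\infty$ interpolation, and the orbital estimate for $\tilde u-\bar u(\cdot-\alpha)$ follows from the Taylor expansion $\bar u(x-\alpha)=\bar u(x)-\alpha\bar u'(x)+O(\alpha^2)|\bar u''|$ together with $|\alpha|\leq C\varepsilon$.

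The hard part will be controlling the nonlinear integral $\int_0^t\!\int(S+R)_y(x,t-s;y)Q(u(y,s))\,dy\,ds$ in $L^\infty$: the $(t-s)^{-1}$ singularity in the bound on $R_y$ is borderline for time integrability, so I will need to align the convected-Gaussian factor of the kernel with the two distinct Gaussians inherited by the $v$-template from the $x\gtrless 0$ splitting of $S$ in \eqref{8.5} in order to extract the correct $(1+t)^{-1/2}$ rate without a logarithmic loss. A secondary subtlety is the ``flat'' direction $|\alpha|\leq C\varepsilon$ (no time decay), where absolute convergence of the defining integral for $\alpha(t)$ must be shown uniformly in $t$; this uses decisively the $\mathrm{errfn}$ form of $e(y,t)$.
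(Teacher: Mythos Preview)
Your overall strategy---Duhamel plus Green function decomposition plus continuous induction---matches the paper, but there is a genuine gap in the way you set up the perturbation variable, and it prevents the iteration from closing.

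You take $u:=\tilde u-\bar u$ unshifted, then subtract the excited piece \emph{linearly} by setting $v:=u-\alpha\bar u'$. The Duhamel identity for $v$ then has source $Q(u)=Q(v+\alpha\bar u')$. Since $|\alpha(t)|$ is merely bounded (no decay), the term $\alpha(t)^2|\bar u'(y)|^2$ inside $|Q(u)|\leq C|u|^2$ is a time-independent, spatially localized source. Feeding this through $\tilde G_y$ in $L^\infty(x)$ gives, on $[0,t/2]$, a contribution of order $\varepsilon^2\zeta(t)^2\int_0^{t/2}(t-s)^{-1}\,ds\sim\varepsilon^2\zeta(t)^2$, and on $[t/2,t]$ a contribution $\sim\varepsilon^2\zeta(t)^2\,t^{1/2}$. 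Neither carries the $(1+t)^{-1/2}$ decay you need for $(1+t)^{1/2}|v|_{L^\infty}$ to stay bounded, so $\zeta(t)$ grows. More conceptually: $v$ differs from the true centered perturbation $\tilde u-\bar u(\cdot-\alpha)$ by the Taylor remainder $\bar u(\cdot-\alpha)-\bar u+\alpha\bar u'=O(\alpha^2)|\bar u''|$, which is $O(\varepsilon^2)$ but does \emph{not} decay in time; hence $|v(t)|_{L^\infty}$ cannot decay to zero and your bootstrap quantity blows up like $t^{1/2}$.

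The paper avoids this by \emph{pre-shifting}: it defines $u(x,t):=\tilde u(x+\alpha(t),t)-\bar u(x)$, so that the iterate $u$ itself is expected to decay. The price is an extra term $\dot\alpha(t)(u_x+\bar u_x)$ in the perturbation equation; after Duhamel the $\dot\alpha\bar u_x$ piece contributes exactly $\alpha(t)\bar u'(x)$ (since $e^{Lt}\bar u'=\bar u'$), and $\alpha$ is then chosen to cancel the $E$-part of the Green function. The nonlinear source becomes $N(u)+\dot\alpha\,u$ with $N(u)=O(u^2)$, and now \emph{both} factors decay: $|\dot\alpha|\leq C(1+t)^{-1/2}$ and $|u|_{L^\infty}\leq C(1+t)^{-1/2}$. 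The continuous-induction variable is simply
\[
\zeta(t)=\sup_{0\le s\le t,\;1\le p\le\infty}\Bigl\{|u(s)|_{L^p}(1+s)^{\frac12(1-\frac1p)}+|\dot\alpha(s)|(1+s)^{1/2}\Bigr\},
\]
and the closing estimate $\zeta\le C(E_0+\zeta^2)$ follows from the abstract $L^p$ bounds of Proposition~\ref{Pro8.7} and the $[0,t/2]$--$[t/2,t]$ split (using the $L^1$ norm of the source on the first half, the $L^p$ norm on the second). No pointwise alignment of convected Gaussians is needed at this stage; the singularity you flag as ``borderline'' is absorbed precisely by this split. Your final paragraph worries about the wrong difficulty; the real issue is the non-decaying $\alpha^2|\bar u'|^2$ piece created by linearly centering instead of nonlinearly shifting.
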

The proof is given in section \ref{sectionstability}.

\subsection{Main Result III: Nonlinear Stability of Viscous Shock Solutions in the Small-amplitude Setting}

Here we we consider the nonlinear stability of weak shocks(small-amplitude), i.e. viscous shock solutions with shock strength
$|u^{\epsilon}_{+}-u^{\epsilon}_{-}|=\epsilon \to 0$.

\begin{thm}[Stability of small amplitude viscous shock solutions] \label{mainthmsmall}
For $0<\epsilon\leq 1$, viscous shock solutions $\bar{u}^{\epsilon}(x)$ of \eqref{0.1} are nonlinearly stable in
$L^1\cap L^{\infty}$ and nonlinearly orbitally asymptotically stable in $L^p$, $p>1$, with respect to initial
perturbations $u_0$ with $L^1\cap L^{\infty}$ norm less than of equal to $\eta_0>0$ sufficiently small, where $\eta_0$ is
independent of $0<\epsilon\leq 1$. More precisely, for some $C>0$ independent of $0<\epsilon\leq 1$, there is
$\alpha \in W^{1,\infty}(t)$ such that,
 \begin{eqnarray*}
 |\tilde{u}(x,t)-\bar{u}^{\epsilon}(x-\alpha(t))|_{L^p(x)}&\leq&C(1+t)^{-\frac{1}{2}(1-\frac{1}{p})}
 |\tilde{u}-\bar{u}^{\epsilon}|_{L^1\cap L^{\infty}}|_{t=0},\\
 |\dot{\alpha}(t)|&\leq&C(1+t)^{-\frac{1}{2}}|\tilde{u}-\bar{u}^{\epsilon}|_{L^1\cap L^{\infty}}|_{t=0},\\
 |\alpha(t)|&\leq&C\epsilon^{-1}|\tilde{u}-\bar{u}^{\epsilon}|_{L^1\cap L^{\infty}}|_{t=0},\\
 |\tilde{u}-\bar{u}^{\epsilon}|_{L^1\cap L^{\infty}}(t)&\leq&C|\tilde{u}-\bar{u}^{\epsilon}|_{L^1\cap L^{\infty}}|_{t=0},
 \end{eqnarray*}
for all $t\geq 0$, $1\leq p\leq\infty$, for solutions $\tilde{u}$ of \eqref{0.1} with
$|\tilde{u}-\bar{u}^{\epsilon}|_{L^1\cap L^{\infty}}|_{t=0}\leq \eta_0$.
\end{thm}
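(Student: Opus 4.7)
The plan is a rescaling argument that reduces Theorem \ref{mainthmsmall} to the amplitude-one Theorem \ref{mainthm}. Fix a reference state $u_*^\epsilon := \tfrac12(u_+^\epsilon + u_-^\epsilon)$ and introduce rescaled variables $\tilde x := \epsilon x$, $\tilde t := \epsilon^2 t$, and unknown $V(\tilde x,\tilde t) := \epsilon^{-1}(u(\epsilon^{-1}\tilde x,\epsilon^{-2}\tilde t) - u_*^\epsilon)$. A direct computation shows that $V$ satisfies a scalar viscous conservation law of the same form, $V_{\tilde t} + F_\epsilon(V)_{\tilde x} = V_{\tilde x\tilde x}$, with
\[
F_\epsilon(V) := \epsilon^{-2}\bigl[f(u_*^\epsilon+\epsilon V) - f(u_*^\epsilon) - \epsilon f'(u_*^\epsilon) V\bigr] = \tfrac12 f''(u_*^\epsilon)V^2 + O(\epsilon V^3),
\]
a smooth $C^2$ family of fluxes depending continuously on $\epsilon\in[0,1]$ with Burgers-type limit as $\epsilon\to 0^+$. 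Under this rescaling, the weak shock $\bar u^\epsilon$ transforms into an $O(1)$-amplitude Lax shock $\bar V^\epsilon$ of the rescaled equation, and the pair $(F_\epsilon,\bar V^\epsilon)$ varies continuously in $\epsilon$.

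Next I revisit the proofs of Theorems \ref{mainthm} and \ref{greenfunctionbounds} and verify that all constants depend only on $C^2$-type norms of the flux and on a finite set of profile characteristics, each uniformly controlled along the family $(F_\epsilon,\bar V^\epsilon)$. Applying the resulting $\epsilon$-uniform stability statement to $V$ yields $\tilde\alpha\in W^{1,\infty}(\tilde t)$ with
\[
\|V - \bar V^\epsilon(\cdot - \tilde\alpha(\tilde t))\|_{L^p(\tilde x)} \le C(1+\tilde t)^{-\frac12(1-1/p)}\|w_0\|_{L^1\cap L^\infty(\tilde x)},
\]
plus the analogous bounds for $\tilde\alpha$ and $\dot{\tilde\alpha}$. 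Setting $\alpha(t) := \epsilon^{-1}\tilde\alpha(\epsilon^2 t)$, the change-of-variables identities $\|u-\bar u^\epsilon(\cdot-\alpha)\|_{L^p(x)} = \epsilon^{1-1/p}\|V-\bar V^\epsilon(\cdot-\tilde\alpha)\|_{L^p(\tilde x)}$ and $|\dot\alpha(t)| = \epsilon|\dot{\tilde\alpha}(\epsilon^2 t)|$, combined with the elementary inequality $\epsilon^2/(1+\epsilon^2 t) = (\epsilon^{-2}+t)^{-1}\le(1+t)^{-1}$ valid for $\epsilon\le 1$, convert the rescaled prefactors $\epsilon^{1-1/p}(1+\epsilon^2 t)^{-(1-1/p)/2}$ and $\epsilon(1+\epsilon^2 t)^{-1/2}$ into the claimed $(1+t)^{-(1-1/p)/2}$ and $(1+t)^{-1/2}$ rates; the $\epsilon^{-1}$ factor in the $|\alpha|$ bound comes directly from the relation $\alpha = \epsilon^{-1}\tilde\alpha$, and the uniform-in-$t$ bound on $\|\tilde u - \bar u^\epsilon\|_{L^1\cap L^\infty}$ follows by combining the corresponding shifted estimate with the triangle inequality $\|\bar u^\epsilon(\cdot-\alpha)-\bar u^\epsilon\|_{L^1\cap L^\infty}\lesssim |\alpha|\cdot(\mathrm{TV}(\bar u^\epsilon),\|(\bar u^\epsilon)'\|_{L^\infty})\lesssim(\epsilon,\epsilon^2)|\alpha|$, which cancels the $\epsilon^{-1}$ in $|\alpha|$.

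The hard part is the mismatch in norm scalings at the initial time: although $\|w_0\|_{L^1(\tilde x)}=\|u_0\|_{L^1(x)}$, one has $\|w_0\|_{L^\infty(\tilde x)} = \epsilon^{-1}\|u_0\|_{L^\infty(x)}$, so the rescaled $L^\infty$ norm is unbounded as $\epsilon\to 0^+$ and the naive $L^1\cap L^\infty$ threshold of Theorem \ref{mainthm} would shrink the admissible $\eta_0$ by a factor of $\epsilon$. To extract a threshold $\eta_0$ independent of $\epsilon$, the uniformity check in the previous step must be strengthened to show that the proof of Theorem \ref{mainthm} requires genuine smallness only on $\|w_0\|_{L^1(\tilde x)}$, while the $L^\infty$ part need only be bounded---even if large in $\epsilon^{-1}$. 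This is plausible because the linear decomposition in Theorem \ref{greenfunctionbounds} yields $L^p$ decay of the form $\|e^{Lt}g\|_{L^p}\lesssim t^{-(1-1/p)/2}\|g\|_{L^1}$ by Young's inequality applied to the Gaussian components $S,R$ and the rank-one structure of the excited term $E$, and the nonlinear Duhamel iteration enters the $L^\infty$ norm only through quadratic products of the form $\|w\|_{L^1}\|w\|_{L^\infty}$ integrated against parabolic smoothing factors $t^{-1/2}$, which absorb any fixed power of $\epsilon^{-1}$ at the cost of a universal constant. Carrying out this absorption cleanly, and verifying that the resulting smallness threshold and stability constants depend only on the shock family $(F_\epsilon, \bar V^\epsilon)$ uniformly in $\epsilon\in(0,1]$, is where the bulk of the technical work lies.
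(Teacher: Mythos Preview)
Your rescaling idea is the same as the paper's, but you apply it at the wrong level, and the resulting gap is exactly the one you flag at the end. The paper does \emph{not} rescale the nonlinear problem and then invoke Theorem~\ref{mainthm} as a black box. Instead it rescales only the \emph{linear} Green function bounds of Theorem~\ref{greenfunctionbounds}: writing $G^\epsilon(x,t;y)=\Lambda\epsilon\,G(\Lambda\epsilon x,\Lambda^2\epsilon^2 t;\Lambda\epsilon y)$ and pulling the estimates of Proposition~\ref{Pro8.7} and Lemma~\ref{Lem8.8} back to the original small-amplitude variables, one finds (Proposition~\ref{Pro_Ge}, Lemma~\ref{Lem_e}) that the bounds on $\tilde G^\epsilon$, $\tilde G^\epsilon_y$, $e^\epsilon_t$, $e^\epsilon_{yt}$ are \emph{identical} to the fixed-amplitude bounds, with no $\epsilon$-dependence; only $e^\epsilon$ and $e^\epsilon_y$ acquire a factor $\epsilon^{-1}$. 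Since the continuous-induction quantity $\zeta(t)$ in Section~\ref{sectionstability} and the representation for $\dot\alpha$ involve only the former group, the entire $\zeta\le C(E_0+\zeta^2)$ argument runs verbatim in the original variables with $E_0=|u_0|_{L^1\cap L^\infty}$ genuinely small and $\epsilon$-independent constants. The $\epsilon^{-1}$ then appears only in the $|\alpha|$ bound, exactly as claimed.

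Your route, by contrast, creates the artificial obstruction $\|w_0\|_{L^\infty}\sim\epsilon^{-1}$ and then tries to argue it away. The specific heuristic you offer---that parabolic smoothing factors $t^{-1/2}$ ``absorb any fixed power of $\epsilon^{-1}$''---is not correct: the $\zeta$-iteration is a continuous induction requiring $\zeta(0)$ small, and $\zeta(0)\ge\|w_0\|_{L^\infty}\sim\epsilon^{-1}$ already violates this; moreover, for $\tilde t\in[0,1]$ the Duhamel integral $\int_0^{\tilde t}(\tilde t-s)^{-1/2}|N(V)|_{L^1}\,ds$ contains no smoothing to offset $|N(V)|_{L^1}\lesssim|V|_{L^1}|V|_{L^\infty}\sim\epsilon^{-1}$. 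One could try to repair this by inserting $\epsilon$-dependent weights into $\zeta$, but carrying that out is exactly equivalent to undoing the rescaling and working in the original variables---which is what the paper does from the start.
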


\subsection{Discussion and future directions}

The nonlinear stability of weak shocks in the general system case was established using more detailed methods in \cite{LZe}; here we use our pointwise Green function bounds and some rescaling arguments to give a new simplified proof in the scalar case. See section \ref{sectionstabilitysmall} for more details. Our approach gives very simple proofs of stability with sharp rates in the small-amplitude limit, completes the treatment of \cite{Z1}. To investigate the small-amplitude nonlinear stability of the system case using our approach would be very interesting.

\section{The Asymptotic Eigenvalue Equations}

The eigenvalue equation $Lw=\lambda w$ associated with \eqref{1.1} is,
\begin{eqnarray} \label{2.1}
w^{\prime\prime}-(f^{\prime}(\bar{u})w)^{\prime} = \lambda w.
\end{eqnarray}
Written as first-order system in the variable $W=(w,w^{\prime})^{t}$, this becomes
\begin{eqnarray} \label{2.2}
W^{\prime}=\mathbb{A}(x;\lambda)W,
\end{eqnarray}
where
$
\mathbb{A}(x;\lambda):=
  \begin{pmatrix}
   0 & 1\\
   \lambda +f^{\prime\prime}(\bar{u})\bar{u}_x & f^{\prime}(\bar{u})
  \end{pmatrix}.
$
We begin by studying the limiting, constant coefficient systems $L_{\pm}w=\lambda w$ of \eqref{2.1} at $\pm \infty$,
$w^{\prime\prime}-(f^{\prime}(\bar{u}_{\pm})w)^{\prime} = \lambda w$. Or, written as a first-order system,
\begin{eqnarray} \label{2.5}
W^{\prime}=\mathbb{A}_{\pm}(\lambda)W,
\end{eqnarray}
where
$
\mathbb{A}_{\pm}(\lambda):=
  \begin{pmatrix}
   0 & 1\\
   \lambda & f^{\prime}(\bar{u}_{\pm})
  \end{pmatrix}
$,
since $\bar{u}^{\prime}_{\pm}=0$. The normal modes of \eqref{2.5} are $V_j^{\pm}e^{\mu_j^{\pm}x}$, $j=1,2$, where $\mu_j^{\pm},V_j^{\pm}$ are the eigenvalues and eigenvectors of $\mathbb{A}_{\pm}$; they are easily seen to satisfy
$
V_j^{\pm}=
 \begin{pmatrix}
 v_j^{\pm}\\
 \mu_j^{\pm}v_j^{\pm}
 \end{pmatrix}, v_j^{\pm} \in \mathbb{C}
$
and
$(\mu_j^{\pm})^2-f^{\prime}(\bar{u}_{\pm})\mu_j^{\pm}-\lambda=0$.
Then we can solve for $\mu_j^{\pm}$ as
$\mu_j^{\pm}(\lambda)=\frac{f^{\prime}(\bar{u}_{\pm})\pm \sqrt{(f^{\prime}(\bar{u}_{\pm}))^2+4\lambda}}{2}$.

\begin{lem} \label{Lem2.1}
Let $\mu_1(\lambda)=\frac{a-\sqrt{a^2+4\lambda}}{2}$ and $\mu_2(\lambda)=\frac{a+\sqrt{a^2+4\lambda}}{2}$ be two solutions to the equation $\mu^2-a\mu-\lambda=0$, where $a \in \mathbb{R},\lambda \in \mathbb{C}$. Then $\mathrm{Re}\mu_1(\lambda)<0<\mathrm{Re}\mu_2(\lambda)$ or $\mathrm{Re}\mu_1(\lambda)>0>\mathrm{Re}\mu_2(\lambda)$ (i.e. real parts of two solutions have different signs) if and only if $\lambda \in \Lambda$, where
$\Lambda=\left\{\lambda \in \mathbb{C}:a^2\mathrm{Re}(\lambda)+(\mathrm{Im}(\lambda))^2>0\right\}$.
\end{lem}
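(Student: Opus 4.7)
The plan is to reduce the two-sign condition on $\mathrm{Re}\,\mu_1,\mathrm{Re}\,\mu_2$ to an explicit algebraic condition on $\mathrm{Re}\,\lambda,\mathrm{Im}\,\lambda$ by parametrizing the complex square root. Concretely, I would set $\sqrt{a^2+4\lambda}=p+iq$ with $p,q\in\mathbb{R}$. Then $\mu_1=\tfrac12(a-p)-\tfrac{i}{2}q$ and $\mu_2=\tfrac12(a+p)+\tfrac{i}{2}q$, so $\mathrm{Re}\,\mu_1\cdot\mathrm{Re}\,\mu_2=(a^2-p^2)/4$. The real parts are nonzero and of opposite sign if and only if $p^2>a^2$, so the whole task reduces to comparing $p^2$ with $a^2$.

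Next I would solve for $p^2$ from the two real identities obtained by squaring the defining relation $(p+iq)^2=a^2+4\lambda$: namely $p^2-q^2=a^2+4\mathrm{Re}(\lambda)$ and $2pq=4\mathrm{Im}(\lambda)$. The latter (with $p^2-q^2$) gives the modulus $p^2+q^2=|a^2+4\lambda|=\sqrt{(a^2+4\mathrm{Re}(\lambda))^2+16(\mathrm{Im}(\lambda))^2}$, and adding yields
\[
p^2=\tfrac12\Bigl[(a^2+4\mathrm{Re}(\lambda))+\sqrt{(a^2+4\mathrm{Re}(\lambda))^2+16(\mathrm{Im}(\lambda))^2}\Bigr].
\]

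Finally I would rearrange $p^2>a^2$ into
\[
\sqrt{(a^2+4\mathrm{Re}(\lambda))^2+16(\mathrm{Im}(\lambda))^2}\;>\;a^2-4\mathrm{Re}(\lambda),
\]
and split on the sign of the right-hand side. If $a^2-4\mathrm{Re}(\lambda)\le 0$, the inequality is automatic and, for $a\ne 0$, the condition $a^2\mathrm{Re}(\lambda)+(\mathrm{Im}(\lambda))^2>0$ is immediate from $\mathrm{Re}(\lambda)\ge a^2/4>0$. Otherwise both sides are nonnegative and I may square; a direct expansion shows the cross terms $\pm 8a^2\mathrm{Re}(\lambda)$ leave exactly $16a^2\mathrm{Re}(\lambda)+16(\mathrm{Im}(\lambda))^2>0$, which is the defining inequality of $\Lambda$. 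Reversing the chain of equivalences closes the ``only if'' direction.

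I do not expect a serious obstacle: the argument is a routine manipulation of a complex square root, and the only care needed is to justify the squaring step by tracking signs and to note the boundary case $p^2=a^2$ corresponds precisely to $a^2\mathrm{Re}(\lambda)+(\mathrm{Im}(\lambda))^2=0$, i.e.\ the boundary $\partial\Lambda$, where one of $\mathrm{Re}\,\mu_j$ vanishes and the strict sign condition fails.
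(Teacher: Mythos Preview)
Your argument is correct and complete. The paper itself states Lemma~\ref{Lem2.1} without proof, so there is nothing to compare against; your parametrization $\sqrt{a^2+4\lambda}=p+iq$, the reduction to $p^2>a^2$, and the case split on the sign of $a^2-4\mathrm{Re}(\lambda)$ constitute exactly the standard elementary verification one would expect, and you have correctly flagged the implicit hypothesis $a\neq 0$ (which holds in the paper's application by the Lax condition $f'(\bar u_\pm)\neq 0$).
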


From this lemma we know that the \textit{region of consistent splitting} $\Lambda$ of system \eqref{2.5} contains the  intersection of the following two sets
$
\Lambda_{\pm}=\left\{\lambda \in \mathbb{C}:(f^{\prime}(\bar{u}_{\pm}))^2 \mathrm{Re}(\lambda)+(\mathrm{Im}(\lambda))^2>0\right\},
$
then the Lax condition
$f^{\prime}(\bar{u}_+)<0<f^{\prime}(\bar{u}_-)$
and Lemma \ref{Lem2.1} gives the two eigenvalues of $\mathbb{A}_{\pm}$ as following,
$\mathrm{Re}\mu_1(\lambda)<0<\mathrm{Re}\mu_2(\lambda)$.

\section{Asymptotic Behavior of the Stationary Solution $\bar{u}$} \label{sectionasy}

The stationary wave solution $\bar{u}(x)$ satisfies \eqref{0.3}: $f(\bar{u}(x))_x = \bar{u}_{xx}$ and the asymptotic conditions
$$
\lim_{x \to +\infty}\bar{u}(x)=\bar{u}_{+}, \quad
\lim_{x \to -\infty}\bar{u}(x)=\bar{u}_{-}, \quad
\lim_{x \to +\infty}\bar{u}^{\prime}(x)=0, \quad
\lim_{x \to -\infty}\bar{u}^{\prime}(x)=0.
$$
We integrate \eqref{0.3} from $x(>0)$ to $+\infty$, to get
$\bar{u}_x=f(\bar{u})-f(\bar{u}_{+})$, rewrite as
\begin{eqnarray} \label{3.6}
\bar{u}_x=F(\bar{u}):=f(\bar{u})-f(\bar{u}_{+}).
\end{eqnarray}
We notice that $\bar{u}=\bar{u}_+$ is a critical point of the ODE (\ref{3.6}), thus for $\bar{u}$ to be a stable solution of ODE (\ref{3.6}), it is necessary that $F^{\prime}(\bar{u}_+) \leq 0$, i.e., $F^{\prime}(\bar{u}_+)=f^{\prime}(\bar{u}_+)\leq 0$. Let $\phi=\bar{u}$, then (\ref{3.6}) reads
\begin{eqnarray} \label{3.7}
\phi_x=F(\phi):=f(\phi)-f(\bar{u}_{+})
\end{eqnarray}

\begin{lem} \label{Lem3.1}
Consider the initial value problem \eqref{3.7} with $\phi(x_0)=\bar{u}_0$. If we assume that $f^{\prime}(\bar{u}_+)<0$, then there are positive constants $\delta>0$ and $\alpha >0$ that are independent of the choice of the initial time $x_0$ such that the solution $x \mapsto \phi(x)$ of the initial value problem \eqref{3.7} satisfies
 \begin{eqnarray} \label{3.8}
 \left|\phi(x)-\bar{u}_+ \right| \leq \left|\phi(x_0)-\bar{u}_+ \right|e^{-\alpha(x-x_0)}
 \end{eqnarray}
for $x \geq x_0$ whenever $\left|\phi(x_0)-\bar{u}_+\right| \leq \delta$.
\end{lem}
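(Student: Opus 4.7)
The plan is to linearize the ODE \eqref{3.7} about its equilibrium $\bar{u}_+$ and run a standard Gronwall/energy argument on the perturbation. Since $F(\bar{u}_+) = f(\bar{u}_+) - f(\bar{u}_+) = 0$ and $F'(\bar{u}_+) = f'(\bar{u}_+) < 0$ by hypothesis, a Taylor expansion gives
\begin{equation*}
F(\phi) = f'(\bar{u}_+)(\phi - \bar{u}_+) + O\!\left((\phi-\bar{u}_+)^2\right)
\end{equation*}
uniformly on a neighborhood of $\bar{u}_+$, since $f \in C^2$. Set $\psi(x) := \phi(x) - \bar{u}_+$ and $a := -f'(\bar{u}_+) > 0$, so that
\begin{equation*}
\psi_x = -a\,\psi + r(\psi), \qquad |r(\psi)| \leq M |\psi|^2
\end{equation*}
for some constant $M>0$ and all $|\psi| \leq \delta_0$, where $\delta_0$ depends only on $f$ and $\bar{u}_+$ (not on the initial time $x_0$).

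Next I would run an $L^2$-type energy estimate on $\psi$. Computing
\begin{equation*}
\tfrac{d}{dx}\psi^2 = 2\psi\psi_x = -2a\psi^2 + 2\psi\, r(\psi) \leq -2a\psi^2 + 2M|\psi|^3 = -2\bigl(a - M|\psi|\bigr)\psi^2.
\end{equation*}
Now choose $\delta \in (0,\delta_0]$ small enough that $M\delta \leq a/2$, and set $\alpha := a/2$. Then, as long as $|\psi(x)| \leq \delta$, we have
\begin{equation*}
\tfrac{d}{dx}\psi^2 \leq -2\alpha\,\psi^2,
\end{equation*}
and Gronwall yields $|\psi(x)| \leq |\psi(x_0)|\,e^{-\alpha(x-x_0)}$ on the interval of validity.

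It remains to verify that $|\psi(x)| \leq \delta$ persists for all $x \geq x_0$ once $|\psi(x_0)| \leq \delta$. This is the only genuinely subtle step, and I would handle it by a standard continuity/bootstrap argument: let $x^* := \sup\{x \geq x_0 : |\psi(s)| \leq \delta \text{ for all } s \in [x_0,x]\}$. On $[x_0,x^*)$ the differential inequality above holds, so $|\psi(x)| \leq |\psi(x_0)|\,e^{-\alpha(x-x_0)} \leq \delta$ strictly, which by continuity forces $x^* = +\infty$. Since $\delta$, $\alpha$ depend only on $f$ and $\bar{u}_+$, they are manifestly independent of the choice of $x_0$, yielding \eqref{3.8}. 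The analogous statement for $x \leq x_0$ with $\bar{u}_-$ replacing $\bar{u}_+$ (needed later for $y \leq 0$) follows by the same argument after reversing the direction of $x$.
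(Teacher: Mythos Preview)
Your proof is correct and is essentially an explicit implementation of what the paper invokes: the paper's entire argument for this lemma is the single sentence ``This lemma can be proved with standard stable manifold techniques.'' Your linearization-plus-Gronwall argument is exactly the one-dimensional stable manifold estimate, written out in full, so the approaches coincide.
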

This lemma can be proved with standard stable manifold techniques.

Notice that we can rewrite \eqref{3.6} as
$\bar{u}_x=f(\bar{u})-f(\bar{u}_+)=f^{\prime}(\xi)(\bar{u}-\bar{u}_+)$
so we have the following estimate
$
\left|\bar{u}_x \right| \leq \left|f^{\prime} \right|_{L^{\infty}}\left|\bar{u}(x_0)-\bar{u}_+ \right|e^{-\alpha(x-x_0)}
$.
We then can rephrase Lemma \ref{Lem3.1} as the following

\begin{lem} \label{Lem3.2}
Consider the initial value problem \eqref{3.7} with $\bar{u}(x_0)=\bar{u}_0$. If we assume that $f^{\prime}(\bar{u}_+)<0$, then there are positive constants $\delta>0$ and $\alpha >0$ that are independent of the choice of the initial time $x_0$ such that the solution $x \mapsto \bar{u}(x)$ of the initial value problem \eqref{3.7} satisfies
 \begin{eqnarray*}
 \left|\bar{u}(x)-\bar{u}_+ \right| \leq \left|\bar{u}(x_0)-\bar{u}_+ \right|e^{-\alpha(x-x_0)}, \quad
 \left|\bar{u}_x \right| \leq \left|f^{\prime} \right|_{L^{\infty}}\left|\bar{u}(x_0)-\bar{u}_+ \right|e^{-\alpha(x-x_0)}
 \end{eqnarray*}
for $x \geq x_0$ whenever $\left|\bar{u}(x_0)-\bar{u}_+\right| \leq \delta$.
\end{lem}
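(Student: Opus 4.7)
The plan is to observe that Lemma~\ref{Lem3.2} is essentially a direct corollary of Lemma~\ref{Lem3.1} combined with the ODE \eqref{3.6} itself, so the work required is almost entirely bookkeeping rather than new analysis. I would first note that the first inequality,
\[
\left|\bar{u}(x)-\bar{u}_+\right| \leq \left|\bar{u}(x_0)-\bar{u}_+\right|e^{-\alpha(x-x_0)},
\]
is literally the content of Lemma~\ref{Lem3.1} applied with $\phi=\bar{u}$, since \eqref{3.6} and \eqref{3.7} are the same ODE. Thus no additional argument is needed; the constants $\delta,\alpha>0$ are inherited directly from that lemma.

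For the derivative bound, I would start from the ODE in the form $\bar{u}_x = f(\bar{u})-f(\bar{u}_+)$ and apply the mean value theorem (using $f\in C^2$) to write
\[
\bar{u}_x = f'(\xi)\bigl(\bar{u}-\bar{u}_+\bigr)
\]
for some $\xi$ between $\bar{u}(x)$ and $\bar{u}_+$. Taking absolute values and using that $\bar{u}$ takes values in a compact subset of $\mathbb{R}$ (bounded between $\bar{u}_-$ and $\bar{u}_+$), one has $|f'(\xi)|\leq |f'|_{L^\infty}$ on this range, so
\[
\left|\bar{u}_x(x)\right| \leq |f'|_{L^\infty}\left|\bar{u}(x)-\bar{u}_+\right|,
\]
and substituting the first inequality yields the second. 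The only mildly delicate point, if any, is interpreting $|f'|_{L^\infty}$: strictly speaking one uses boundedness of $f'$ on a neighborhood of $[\bar{u}_-,\bar{u}_+]$, which is automatic from $f\in C^2$, so there is no real obstacle. With these two observations the lemma follows immediately, and I would keep the written proof to a single short remark pointing out the mean value theorem step.
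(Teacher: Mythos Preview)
Your proposal is correct and follows exactly the paper's own argument: the first bound is just Lemma~\ref{Lem3.1} with $\phi=\bar{u}$, and the derivative bound comes from writing $\bar{u}_x=f(\bar{u})-f(\bar{u}_+)=f'(\xi)(\bar{u}-\bar{u}_+)$ via the mean value theorem and then substituting the first bound. The paper presents this computation in the text immediately preceding the lemma and then simply states the result as a rephrasing of Lemma~\ref{Lem3.1}.
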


Similarly, if we integrate \eqref{0.3} from $-\infty$ to some $x<0$, we get
\begin{eqnarray} \label{3.18}
\bar{u}_x=f(\bar{u})-f(\bar{u}_{-})
\end{eqnarray}
Follow some similar arguments,

\begin{lem} \label{Lem3.3}
Consider the initial value problem \eqref{3.18} with $\bar{u}(x_0)=\bar{u}_0$. If we assume that $f^{\prime}(\bar{u}_-)>0$, then there are positive constants $\delta>0$ and $\alpha >0$ that are independent of the choice of the initial time $x_0$ such that the solution $x \mapsto \bar{u}(x)$ of the initial value problem \eqref{3.18} satisfies
 \begin{eqnarray*}
 \left|\bar{u}(x)-\bar{u}_- \right| \leq \left|\bar{u}(x_0)-\bar{u}_- \right|e^{\alpha(x-x_0)}, \quad
 \left|\bar{u}_x \right| \leq \left|f^{\prime} \right|_{L^{\infty}}\left|\bar{u}(x_0)-\bar{u}_- \right|e^{\alpha(x-x_0)}
 \end{eqnarray*}
for $x \leq x_0$ whenever $\left|\bar{u}(x_0)-\bar{u}_-\right| \leq \delta$.
\end{lem}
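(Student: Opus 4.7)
The plan is to reduce Lemma~\ref{Lem3.3} to the already-established Lemma~\ref{Lem3.1} by a spatial reflection, since the two statements differ only in which end of the real line is treated and in the sign of the relevant eigenvalue. Setting $\tilde{x}:=-x$ and $\psi(\tilde{x}):=\bar{u}(-\tilde{x})$, the ODE \eqref{3.18} becomes
$$\psi_{\tilde{x}}=-\bigl(f(\psi)-f(\bar{u}_-)\bigr)=:\tilde{F}(\psi),$$
whose linearization at the critical point $\bar{u}_-$ has coefficient $\tilde{F}'(\bar{u}_-)=-f'(\bar{u}_-)<0$. Thus the hypothesis $f'(\bar{u}_-)>0$ of Lemma~\ref{Lem3.3} converts precisely into the stability hypothesis of Lemma~\ref{Lem3.1}, and the critical point $\bar{u}_-$ that is repelling for the forward $x$-flow of \eqref{3.18} becomes attracting for the forward $\tilde{x}$-flow of the reversed system.

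Applying Lemma~\ref{Lem3.1} to $\psi$ with initial data $\psi(\tilde{x}_0)=\bar{u}_0$ at $\tilde{x}_0:=-x_0$ produces positive constants $\delta,\alpha>0$, independent of $\tilde{x}_0$ and hence of $x_0$, such that
$$|\psi(\tilde{x})-\bar{u}_-|\le|\bar{u}_0-\bar{u}_-|\,e^{-\alpha(\tilde{x}-\tilde{x}_0)}\qquad(\tilde{x}\ge\tilde{x}_0)$$
whenever $|\bar{u}_0-\bar{u}_-|\le\delta$. Rewriting back in terms of $x=-\tilde{x}$, the condition $\tilde{x}\ge\tilde{x}_0$ becomes $x\le x_0$ and $-\alpha(\tilde{x}-\tilde{x}_0)=\alpha(x-x_0)$, giving the first inequality in Lemma~\ref{Lem3.3}.

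For the derivative bound, I would then reuse the mean-value factorization $f(\bar{u})-f(\bar{u}_-)=f'(\xi)(\bar{u}-\bar{u}_-)$ for some $\xi$ lying between $\bar{u}(x)$ and $\bar{u}_-$, which is valid because the previous step keeps $\bar{u}(x)$ in a $\delta$-neighborhood of $\bar{u}_-$ on $(-\infty,x_0]$. Substituting into \eqref{3.18} gives $|\bar{u}_x|\le|f'|_{L^\infty}|\bar{u}(x)-\bar{u}_-|$, and combining with the exponential decay just obtained yields the second inequality.

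I do not expect any genuine obstacle; the lemma is the mirror image of Lemma~\ref{Lem3.1} under the involution $x\mapsto -x$, and the standard stable-manifold argument sketched after Lemma~\ref{Lem3.1} could equally well be run directly for the reversed system. The only point deserving a brief mention is the uniformity of $\delta,\alpha$ in $x_0$, which survives the change of variables unchanged because Lemma~\ref{Lem3.1} already delivers this uniformity.
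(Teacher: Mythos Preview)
Your proposal is correct and matches the paper's approach: the paper simply says ``Follow some similar arguments'' before stating Lemma~\ref{Lem3.3}, and your spatial reflection $\tilde{x}=-x$ is the natural way to make that similarity precise, converting the unstable critical point $\bar{u}_-$ of \eqref{3.18} into a stable one for the reversed flow so that the stable-manifold argument behind Lemma~\ref{Lem3.1} applies verbatim. The only nitpick is that Lemma~\ref{Lem3.1} is literally stated for equation \eqref{3.7} with critical point $\bar{u}_+$, so strictly you are invoking its proof (the standard stable-manifold estimate for a scalar ODE at a hyperbolic sink) rather than the lemma as phrased; this is harmless.
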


Combine Lemma \ref{Lem3.2} and \ref{Lem3.3} we know that if we assume the Lax condition $f^{\prime}(\bar{u}_+)<0<f^{\prime}(\bar{u}_-)$ then,

\begin{pro} \label{Pro3.4}
There are positive constants $\delta>0$ and $\alpha >0$ that are independent of the choice of the initial time $x_0$ such that the stationary wave solution $x \mapsto \bar{u}(x)$ satisfies asymptotic description.
 \begin{eqnarray*}
 \left|\bar{u}(x)-\bar{u}_{\pm} \right| \leq \left|\bar{u}(x_0)-\bar{u}_{\pm} \right|e^{-\alpha\left|x-x_0\right|}, \quad
 \left|\bar{u}_x \right| \leq \left|f^{\prime} \right|_{L^{\infty}}\left|\bar{u}(x_0)-\bar{u}_{\pm} \right|e^{-\alpha\left|x-x_0\right|}
 \end{eqnarray*}
for $x \gtrless x_0$ whenever $\left|\bar{u}(x_0)-\bar{u}_{\pm}\right| \leq \delta$.
\end{pro}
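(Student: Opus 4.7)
The plan is simply to unify the two one-sided statements already established in Lemma~\ref{Lem3.2} and Lemma~\ref{Lem3.3}, which handle the decay toward $\bar{u}_+$ as $x\to +\infty$ and toward $\bar{u}_-$ as $x\to -\infty$ respectively. Since the Lax condition $f'(\bar{u}_+)<0<f'(\bar{u}_-)$ is exactly what makes $\bar{u}_+$ a stable rest point for the forward flow of \eqref{3.7} and $\bar{u}_-$ a stable rest point for the backward flow of \eqref{3.18}, both one-sided lemmas apply simultaneously.

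More concretely, I would first invoke Lemma~\ref{Lem3.2} to extract constants $\delta_+,\alpha_+>0$ such that $|\bar{u}(x)-\bar{u}_+|\le|\bar{u}(x_0)-\bar{u}_+|e^{-\alpha_+(x-x_0)}$ and $|\bar{u}_x|\le|f'|_{L^\infty}|\bar{u}(x_0)-\bar{u}_+|e^{-\alpha_+(x-x_0)}$ for $x\ge x_0$ whenever $|\bar{u}(x_0)-\bar{u}_+|\le\delta_+$. On this range $x-x_0=|x-x_0|$, matching the form in the proposition. Then I would invoke Lemma~\ref{Lem3.3} to obtain $\delta_-,\alpha_->0$ giving the analogous estimate $|\bar{u}(x)-\bar{u}_-|\le|\bar{u}(x_0)-\bar{u}_-|e^{\alpha_-(x-x_0)}$ for $x\le x_0$; since $x-x_0\le 0$ here, the exponent equals $-\alpha_-|x-x_0|$, again in the desired form.

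To merge the two sides under a single pair of constants as the proposition states, I would set $\delta:=\min(\delta_+,\delta_-)$ and $\alpha:=\min(\alpha_+,\alpha_-)$; the universal bound $|f'|_{L^\infty}$ on the derivative factor is already common to both. The derivative estimate is a direct consequence of the algebraic identity $\bar{u}_x=f(\bar{u})-f(\bar{u}_\pm)=f'(\xi)(\bar{u}-\bar{u}_\pm)$ from \eqref{3.6} and \eqref{3.18}, combined with the exponential decay of $|\bar{u}-\bar{u}_\pm|$ just obtained, exactly as already recorded inside Lemmas~\ref{Lem3.2} and \ref{Lem3.3}.

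There is no real obstacle here beyond bookkeeping; the only thing to be careful about is that the constant $\alpha$ must be taken \emph{independent} of $x_0$, which is guaranteed because the underlying one-sided lemmas provide constants depending only on the linearization $f'(\bar{u}_\pm)$ at the endpoints, not on where one starts the trajectory. The substantive content was already built into the stable-manifold argument behind Lemma~\ref{Lem3.1}; Proposition~\ref{Pro3.4} is merely the symmetric two-sided repackaging that will be convenient in later sections.
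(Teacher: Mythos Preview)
Your proposal is correct and takes essentially the same approach as the paper: the paper simply states ``Combine Lemma~\ref{Lem3.2} and \ref{Lem3.3}'' under the Lax condition and asserts Proposition~\ref{Pro3.4} without further detail, which is exactly the merging-by-minimum-constants argument you spell out.
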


\section{The Gap Lemma}

The "Gap Lemma" of \cite{ZH} consists of the idea of relating the behavior near $x=\pm \infty$ of solutions of \eqref{2.2} to that of solutions of the asymptotic systems \eqref{2.5}, in a manner that is analytic in $\lambda$. In this section, we state the general Gap Lemma for a general equation
\begin{eqnarray} \label{4.1}
W^{\prime}=\mathbb{A}(x;\lambda)W
\end{eqnarray}
with the hypothesis
$
\left| \mathbb{A}-\mathbb{A}_{\pm} \right| = \mathbf{O}(e^{-\alpha |x|}) \text{  as  } x \rightarrow \pm \infty.
$

\begin{pro} \label{Pro4.1}
In \eqref{4.1}, let $\mathbb{A}$ be $C^{0,\alpha}$ in $x$ and analytic in $\lambda$, with
$\left|\mathbb{A}(x;\lambda)-\mathbb{A}_-(\lambda)\right|=\mathbf{O}(e^{-\alpha |x|})$ as $x \to -\infty$ for $\alpha >0$, and
$0<\bar{\alpha}<\alpha$. If $V^{-}(\lambda)$ is an eigenvector of $\mathbb{A}_-$ with eigenvalue $\mu(\lambda)$, both analytic in $\lambda$, then there exists a solution $W(x;\lambda)$ of $(\ref{4.1})$ of form
 $W(x;\lambda)=V(x;\lambda)e^{\mu(\lambda) x}$
where $V$ (hence $W$) is $C^{1,\alpha}$ in $x$ and locally analytic in $\lambda$, and for each $j=0,1,2,\ldots$ satisfies
 \begin{equation} \label{4.4}
 \begin{aligned}
 \left(\frac{\partial}{\partial \lambda}\right)^j V(x;\lambda)
 &=\left(\frac{\partial}{\partial \lambda}\right)^j V^-(\lambda)
 +\mathbf{O}\left(e^{-\bar{\alpha}|x|}\left|\left(\frac{\partial}{\partial \lambda}\right)^j V^-(\lambda)\right| \right)\\
 &=\left(\frac{\partial}{\partial \lambda}\right)^j V^-(\lambda)(1+\mathbf{O}(e^{-\bar{\alpha} |x|}))
 \end{aligned}
 \end{equation}
for $x<0$. Moreover, if $\mathrm{Re} \mu(\lambda)> \mathrm{Re} \tilde{\mu}(\lambda)-\alpha$ for all eigenvalues $\tilde{\mu}(\lambda)$ of $\mathbb{A}_-(\lambda)$, then $W$ is uniquely determined by \eqref{4.4}, and \eqref{4.4} holds for $\bar{\alpha}=\alpha$.
\end{pro}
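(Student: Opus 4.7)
The plan is to substitute $W(x;\lambda) = (V^-(\lambda) + Z(x;\lambda))\,e^{\mu(\lambda)x}$ into \eqref{4.1} and reduce the claim to producing a $Z$ decaying like $e^{-\bar{\alpha}|x|}$ as $x\to -\infty$. Using $(\mathbb{A}_-(\lambda) - \mu(\lambda)I)V^-(\lambda)=0$, a direct substitution gives
$$
Z' = (\mathbb{A}_- - \mu I)Z + \Theta(x;\lambda)(V^- + Z),\qquad \Theta := \mathbb{A} - \mathbb{A}_- = \mathbf{O}(e^{-\alpha|x|}).
$$
I would convert this to a fixed-point equation via variation of parameters using the constant-coefficient fundamental matrix $\Phi(x) := e^{(\mathbb{A}_-(\lambda)-\mu(\lambda)I)x}$, together with the spectral decomposition $I = \Pi_0 + \Pi_- + \Pi_+$, where $\Pi_0$ projects onto $\mathrm{span}(V^-)$ and $\Pi_\pm$ project onto the invariant subspaces of $\mathbb{A}_-$ whose other eigenvalues $\tilde{\mu}$ satisfy $\mathrm{Re}(\tilde{\mu}-\mu)<0$ and $\mathrm{Re}(\tilde{\mu}-\mu)>0$ respectively. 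The key choice is to integrate the $\Pi_0$ and $\Pi_-$ components from $-\infty$ (where $\Phi(x-y)\Pi_{0,-}$ is bounded for $y\le x$) and the $\Pi_+$ component from a fixed base point $x_*\le 0$ (where $\Phi(x)\Pi_+$ itself decays as $x\to-\infty$).

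This produces an integral operator $\mathcal{T}$; I would show that for any $\bar{\alpha}<\alpha$ and $|x_*|$ sufficiently large, $\mathcal{T}$ is a strict contraction on the weighted Banach space
$$
\mathcal{X}_{\bar{\alpha}} := \bigl\{Z : \|Z\|_{\bar{\alpha}} := \sup_{x\le x_*}e^{\bar{\alpha}|x|}|Z(x)| < \infty\bigr\}.
$$
The contraction estimate amounts to bounding $\int |\Phi(x-y)\Theta(y)|\,e^{-\bar{\alpha}|y|}\,dy$ using the standard matrix-exponential bounds on each invariant subspace together with $|\Theta(y)|\le C e^{-\alpha|y|}$; the gap $\alpha-\bar{\alpha}>0$ is exactly what renders each of the three integrals finite, and choosing $|x_*|$ large drives the operator norm below $1$. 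The unique fixed point $Z\in\mathcal{X}_{\bar{\alpha}}$ extends to all $x\le 0$ by the smooth ODE flow, yielding \eqref{4.4} for $j=0$. Analyticity of $Z$ in $\lambda$ is automatic since $\mathcal{T}$ depends analytically on $\lambda$ on a compact neighborhood and the contraction estimate is uniform in $\lambda$. The derivative bounds in $\lambda$ follow by induction: differentiating the fixed-point equation produces a new equation for $(\partial_\lambda)^j Z$ whose forcing, after Leibniz expansion, has leading term $\Theta(y)(\partial_\lambda)^j V^-$ with remaining pieces involving already-controlled lower-order derivatives; the same contraction estimate closes it and yields the second form of \eqref{4.4}.

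For the sharp conclusion $\bar{\alpha}=\alpha$ under the spectral-gap hypothesis $\mathrm{Re}\,\mu>\mathrm{Re}\,\tilde{\mu}-\alpha$, note that the $\Pi_+$-integral produces a leading-order $\mathbf{O}(e^{\rho_+ x})$ term with $\rho_+ = \min_{\tilde{\mu}\in\Pi_+}\mathrm{Re}(\tilde{\mu}-\mu) < \alpha$; this leading term is itself a homogeneous solution $\Phi(x)\Pi_+ c$, which I would absorb by choosing the free constant suitably (equivalently, by shifting $x_*$), leaving a remainder of the sharp rate $e^{\alpha x}$. Uniqueness of $W$ then follows because any competing solution would differ from ours by some $\tilde{V}e^{\tilde{\mu}x}$, and the gap condition forces $\mathrm{Re}(\tilde{\mu}-\mu)<\alpha$, so this difference is not $o(e^{-\alpha|x|})$ and would violate \eqref{4.4}. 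I expect the main obstacle to be the careful bookkeeping across the three spectral components in the contraction estimate---verifying, in particular, that the contraction constant tends to $0$ as $|x_*|\to\infty$ uniformly in $\lambda$ on compacts---and the upgrade step, where the subtraction of the slow $\Pi_+$-mode must preserve the analyticity in $\lambda$ established earlier.
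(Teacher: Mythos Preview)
The paper does not prove this proposition; it merely states the Gap Lemma of \cite{ZH} and cites that reference. Your outline is precisely the standard argument found there: the ansatz $W=(V^{-}+Z)e^{\mu x}$, variation of parameters against $e^{(\mathbb{A}_{-}-\mu I)x}$ with the spectral splitting $\Pi_{0}\oplus\Pi_{-}\oplus\Pi_{+}$, contraction in an exponentially weighted space exploiting the margin $\alpha-\bar\alpha>0$, and analyticity in $\lambda$ via uniform contraction of an analytic map. The inductive scheme for the $\lambda$-derivative bounds is also the standard one.

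One refinement on your final paragraph. Under the gap hypothesis $\mathrm{Re}(\tilde\mu-\mu)<\alpha$ for every eigenvalue $\tilde\mu$, the $\Pi_{+}$-integral taken from $-\infty$ is already absolutely convergent, since for $0<\rho<\alpha$ one has $\int_{-\infty}^{x}e^{\rho(x-y)}e^{\alpha y}\,dy=(\alpha-\rho)^{-1}e^{\alpha x}$. It is therefore cleaner to set $x_{*}=-\infty$ in that component as well: this yields the sharp rate $\bar\alpha=\alpha$ and uniqueness simultaneously, directly from the contraction, with no free constant left over and no homogeneous mode to subtract. Your subtraction argument is equivalent in effect but, as written, would require rerunning the fixed-point argument after modifying $Z$, which you do not make explicit. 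Also, for uniqueness, note that the difference of two candidate solutions solves the \emph{variable-coefficient} equation $(\mathbb{A}-\mu I)D$, not the limiting one, so it is not literally of the form $\tilde V e^{\tilde\mu x}$; the direct route is simply that the contraction in $\mathcal{X}_{\alpha}$ has a unique fixed point once all projections are integrated from $-\infty$.
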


\section{Construction of the Resolvent Kernel} \label{sectioncon}

We now construct an explicit representation for the resolvent kernel, that is, the Green function $G_\lambda(x,y)$ associated with the elliptic operator $(L-\lambda I)$, defined by
\begin{eqnarray*}
(L-\lambda I)G_\lambda (x,y)=\delta_y(x) I,
\end{eqnarray*}
where $\delta_y$ denotes the Dirac delta distribution centered at $y$. Let $\Lambda$ be as defined in section \ref{sectionasy}. It is a standard fact that both the resolvent $(L-\lambda I)^{-1}$ and the Green function $G_{\lambda}(x,y)$ are meromorphic in $\lambda$ on $\Lambda$, with isolated poles of finite order(See \cite{He}). Using our explicit representation, we will show more, that $G_{\lambda}(x,y)$ in fact admits a meromorphic extension to a sector
$\Omega_{\theta}:=\{\lambda: \mathrm{Re}(\lambda) \geq -\theta_{1}-\theta_{2}|\mathrm{Im}(\lambda)|\}; \theta_1,\theta_2 >0.$

We now start to find the Green function for the operator $L-\lambda I$. On $\Lambda$, the subspaces spanned by
\begin{eqnarray} \label{5.10}
\phi^+(x) &=& W_1^+(x;\lambda) =V_1^+(x;\lambda) e^{\mu_1^+ x},\text{ for } x>0 \text{  and,}\\
\phi^-(x) &=& W_2^-(x;\lambda) =V_2^-(x;\lambda) e^{\mu_2^- x},\text{ for } x<0.
\end{eqnarray}
contain all solutions of \eqref{2.2} \textit{decaying} at $x=\pm \infty$. We denote the complementary subspaces of \textit{growing modes} by the subspace spanned by
\begin{eqnarray} \label{5.13}
\psi^+(x) &=& W_2^+(x;\lambda) =V_2^+(x;\lambda) e^{\mu_2^+ x},\text{ for } x>0 \text{  and,}\\
\psi^-(x) &=& W_1^-(x;\lambda) =V_1^-(x;\lambda) e^{\mu_1^- x},\text{ for } x<0.
\end{eqnarray}
where $\mu_1^+ <0<\mu_2^+$ and $\mu_1^- <0<\mu_2^-$.

Eigenfunctions, decaying at both $\pm \infty$, occur precisely when the subspaces Span$\{\phi^+\}$ and Span$\{\phi^-\}$ intersect. This intersection can be detected by the vanishing of their mutual determinant, or equivalently of the \textit{Evans function}, $D_L(\lambda):=\det(\phi^+,\phi^-)\mid_{x=0}=(\phi^+\wedge\phi^-)\mid_{x=0}$.
We now turn to the representation of the Green function $G_\lambda(x,y)$.

\begin{lem} \label{Lem5.1}
Let $H_\lambda(x,y)$ denote the Green function for the adjoint operator $(L-\lambda I)^\ast$. Then,
$G_\lambda(y,x)=H_\lambda(x,y)^\ast$. In particular, for $x \neq y$, the matrix $z=G_\lambda(x,\cdot)$ satisfies
 \begin{eqnarray} \label{5.16}
 z^{\prime\prime}=-z^{\prime}f^{\prime}(\bar{u})+\lambda z.
 \end{eqnarray}
\end{lem}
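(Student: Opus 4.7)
The plan is to prove the two assertions of the lemma by a standard duality computation. First I would compute the formal adjoint of $L$: for smooth compactly supported test functions $u,v$, integration by parts in
\[
\langle Lu,v\rangle = \int \bigl(u_{xx}-(f'(\bar u)u)_x\bigr)\,\overline{v}\,dx
\]
moves both derivatives off $u$, producing no boundary terms, and yields $L^\ast v = v_{xx}+f'(\bar u)v_x$, so that $(L-\lambda I)^\ast = L^\ast-\bar\lambda I$. The scalar adjoint equation away from the singular point is therefore $v_{xx}+f'(\bar u)v_x-\bar\lambda v=0$.

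Next I would establish the duality identity $G_\lambda(y,x)=H_\lambda(x,y)^\ast$. By definition, for any Schwartz $f$ and $v$,
\[
\bigl\langle (L-\lambda I)^{-1}f,v\bigr\rangle = \bigl\langle f,\bigl((L-\lambda I)^\ast\bigr)^{-1}v\bigr\rangle,
\]
which after writing both sides as double integrals against the respective Green functions becomes
\[
\int\!\!\int G_\lambda(x,y)f(y)\,\overline{v(x)}\,dy\,dx
=\int\!\!\int f(y)\,\overline{H_\lambda(y,x)v(x)}\,dx\,dy.
\]
Since $f,v$ are arbitrary, this forces $G_\lambda(x,y)=\overline{H_\lambda(y,x)}=H_\lambda(y,x)^\ast$, which after swapping the labels of the independent variables is exactly the claimed identity.

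Finally, I would deduce the equation for $z(\cdot)=G_\lambda(x,\cdot)$ from the equation satisfied by $H_\lambda$. By construction, for fixed $y$ the function $x\mapsto H_\lambda(x,y)$ solves $(L-\lambda I)^\ast H_\lambda(\cdot,y)=\delta_y$, so away from $x=y$ it satisfies
\[
\partial_x^2 H_\lambda(x,y)+f'(\bar u(x))\,\partial_x H_\lambda(x,y)-\bar\lambda\,H_\lambda(x,y)=0.
\]
Taking complex conjugates and then using $G_\lambda(y,x)=\overline{H_\lambda(x,y)}$ gives, for $x\neq y$,
\[
\partial_x^2 G_\lambda(y,x)+f'(\bar u(x))\,\partial_x G_\lambda(y,x)-\lambda\,G_\lambda(y,x)=0.
\]
Renaming $x\leftrightarrow y$ and reading this as an ODE in the second variable yields $z''=-z'f'(\bar u)+\lambda z$, as required.

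The only delicate point, which I would make sure to check carefully, is the bookkeeping of complex conjugation and of the sign/position of $\lambda$ versus $\bar\lambda$ in passing between $(L-\lambda I)^\ast$ and the adjoint Green function $H_\lambda$; everything else is a routine integration by parts and symbolic manipulation. Justifying the formal duality rigorously (i.e.\ that $(L-\lambda I)^{-1}$ and $((L-\lambda I)^\ast)^{-1}$ are genuine Hilbert-space adjoints) is immediate on $\Lambda$ from the meromorphic construction recalled at the beginning of Section~\ref{sectioncon} and the asymptotic behavior of $\bar u$ from Proposition~\ref{Pro3.4}.
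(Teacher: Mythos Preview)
Your proof is correct and follows essentially the same duality argument as the paper: the paper phrases it as a direct pairing $G_\lambda(x_0,y_0)=\langle\delta_{x_0},G_\lambda(\cdot,y_0)\rangle=\langle(L-\lambda I)^\ast H_\lambda(\cdot,x_0),G_\lambda(\cdot,y_0)\rangle=\cdots=H_\lambda(y_0,x_0)^\ast$, while you phrase the same identity at the resolvent level and unpack via the Green kernels. Your explicit computation of $L^\ast$ and derivation of \eqref{5.16} from the adjoint equation is more detailed than the paper, which leaves that step implicit, but the underlying approach is identical.
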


\begin{proof}
Notice that we have $(L-\lambda I)G_\lambda(x,y)=\delta_y(x)I$, and $(L-\lambda I)^\ast H_\lambda(x,y)=\delta_y(x)I$. So
 \begin{eqnarray*}
 G_\lambda(x_0,y_0)
 &=& \langle \delta_{x_0}(x) I, G_\lambda(x,y_0) \rangle_{x}\\
 &=& \langle (L-\lambda I)^\ast H_\lambda(x,x_0), G_\lambda(x,y_0) \rangle_{x}\\
 &=& \langle H_\lambda(x,x_0),(L-\lambda I)G_\lambda(x,y_0) \rangle_{x}\\
 &=& \langle H_\lambda(x,x_0), \delta_{y_0}(x) \rangle_{x}\\
 &=& H_\lambda(y_0,x_0)^\ast.
 \end{eqnarray*}
\end{proof}

The equation \eqref{5.16} is an adjoint equation of \eqref{2.1}, written as a first order system by setting $Z=(z,z^\prime)$, it becomes
\begin{eqnarray} \label{5.17}
Z^\prime=Z\tilde{\mathbb{A}}(x;\lambda),
\end{eqnarray}
where
$
\tilde{\mathbb{A}}(x;\lambda):=
  \begin{pmatrix}
   0 & \lambda\\
   1 & -f^{\prime}(\bar{u})
  \end{pmatrix}.
$
The following lemma shows a duality relation between solutions of \eqref{2.2} and solutions of \eqref{5.17}.

\begin{lem} \label{Lem5.2}
$Z$ is a solution of \eqref{5.17} if and only if $Z\mathcal{S}W\equiv$ constant for any solution $W$ of \eqref{2.2}, where
$\mathcal{S}=
  \begin{pmatrix}
   -f^{\prime}(\bar{u}) & 1\\
   -1 & 0
  \end{pmatrix}.$
\end{lem}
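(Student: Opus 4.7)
The plan is to verify the claim by direct computation: differentiate the product $Z\mathcal{S}W$ in $x$, substitute the two ODEs, and reduce to a purely algebraic identity satisfied by $\mathcal{S}$, $\mathbb{A}$, and $\tilde{\mathbb{A}}$. Concretely, for any smooth $Z(x)$ and $W(x)$ one has
\begin{equation*}
(Z\mathcal{S}W)' \;=\; Z'\mathcal{S}W + Z\mathcal{S}'W + Z\mathcal{S}W'.
\end{equation*}
So if $W$ solves \eqref{2.2} and $Z$ solves \eqref{5.17}, then
\begin{equation*}
(Z\mathcal{S}W)' \;=\; Z\bigl(\tilde{\mathbb{A}}\mathcal{S} + \mathcal{S}' + \mathcal{S}\mathbb{A}\bigr)W,
\end{equation*}
and the forward direction reduces to proving the matrix identity $\tilde{\mathbb{A}}\mathcal{S} + \mathcal{S}' + \mathcal{S}\mathbb{A} = 0$.

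Next I would check this identity by a short $2\times 2$ computation. Noting that $\mathcal{S}' = \bigl(\begin{smallmatrix}-f''(\bar{u})\bar{u}_x & 0\\ 0 & 0\end{smallmatrix}\bigr)$, one computes
\begin{equation*}
\mathcal{S}\mathbb{A}=\begin{pmatrix}\lambda+f''(\bar u)\bar u_x & 0\\ 0 & -1\end{pmatrix},\qquad
\tilde{\mathbb{A}}\mathcal{S}=\begin{pmatrix}-\lambda & 0\\ 0 & 1\end{pmatrix},
\end{equation*}
so their sum together with $\mathcal{S}'$ is the zero matrix. The cancellation of the off-diagonal entries uses exactly the form of the $(1,2)$ and $(2,1)$ entries of $\mathcal{S}$, which is the whole reason for choosing $\mathcal{S}$ as given; this step is where one verifies that the quoted $\mathcal{S}$ is the correct duality pairing between \eqref{2.2} and \eqref{5.17}.

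For the converse, I would use that $\mathcal{S}$ is invertible (indeed $\det\mathcal{S}=1$) and take $W = \Phi$ to be a $2\times 2$ fundamental matrix solution of $\Phi'=\mathbb{A}\Phi$ with $\Phi(x_0)=I$. Then the hypothesis that $Z\mathcal{S}W$ is constant for every solution $W$ implies $Z\mathcal{S}\Phi\equiv Z(x_0)\mathcal{S}$, so differentiating gives
\begin{equation*}
Z'\mathcal{S}\Phi + Z\mathcal{S}'\Phi + Z\mathcal{S}\mathbb{A}\Phi = 0;
\end{equation*}
cancelling the invertible $\Phi$ on the right, then using the identity $\mathcal{S}' + \mathcal{S}\mathbb{A} = -\tilde{\mathbb{A}}\mathcal{S}$ already established, yields $Z'\mathcal{S} = Z\tilde{\mathbb{A}}\mathcal{S}$, and invertibility of $\mathcal{S}$ gives $Z' = Z\tilde{\mathbb{A}}$, i.e.\ \eqref{5.17}.

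The only real obstacle is keeping track of left- versus right-multiplication (since $Z$ is a row and $W$ is a column, and \eqref{5.17} has $Z$ acting on the left of $\tilde{\mathbb{A}}$). There is no analytic content: once the ODEs are written out, everything is a one-line $2\times 2$ matrix verification.
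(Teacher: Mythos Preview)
Your proof is correct and takes essentially the same approach as the paper: both argue by directly differentiating $Z\mathcal{S}W$ and reducing to the two ODEs, with the paper writing out the scalar components $(Z\mathcal{S}W = -zf'(\bar u)w - z'w + zw')$ while you package the same computation as the matrix identity $\tilde{\mathbb{A}}\mathcal{S}+\mathcal{S}'+\mathcal{S}\mathbb{A}=0$. Your converse via a fundamental matrix and the invertibility of $\mathcal{S}$ is slightly more explicit than the paper's one-line ``if and only if,'' but the underlying argument is the same.
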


\begin{proof}
 \begin{eqnarray*}
 (Z\mathcal{S}W)^{\prime}
 &=& (-zf^{\prime}(\bar{u})w-z^{\prime}w+zw^{\prime})^{\prime}\\
 &=& -z^{\prime}f^{\prime}(\bar{u})w-z(f^{\prime}(\bar{u})w)^{\prime}
     -z^{\prime\prime}w-z^{\prime}w^{\prime}+z^{\prime}w^{\prime}+zw^{\prime\prime}\\
 &=& -z^{\prime}f^{\prime}(\bar{u})w-z(f^{\prime}(\bar{u})w)^{\prime}-z^{\prime\prime}w+zw^{\prime\prime}\\
 &=& z[w^{\prime\prime}-(f^{\prime}(\bar{u})w)^{\prime}]-[z^{\prime\prime}+z^{\prime}f^{\prime}(\bar{u})]w\\
 &=& z[w^{\prime\prime}-(f^{\prime}(\bar{u})w)^{\prime}-\lambda w]-[z^{\prime\prime}+z^{\prime}f^{\prime}(\bar{u})-\lambda z]w\\
 &=& [z^{\prime\prime}+z^{\prime}f^{\prime}(\bar{u})-\lambda z]w
 \end{eqnarray*}
since $w$ is a solution of \eqref{2.1}, thus $z$ is a solution of \eqref{5.16} if and only if $(Z\mathcal{S}W)^{\prime}\equiv 0$.
\end{proof}

Using Lemma \ref{Lem5.2}, we can immediately define dual bases $\tilde{W}_1^{\pm}$ and $\tilde{W}_2^{\pm}$ of solutions to \eqref{5.17} by the relation
\begin{eqnarray} \label{5.19}
\tilde{W}_j^{\pm}\mathcal{S}W_k^{\pm}=\delta_{jk},
\end{eqnarray}
where
$
\delta_{jk}=
 \left\{
  \begin{array}{l l}
    1 & \quad \text{if $j=k$}\\
    0 & \quad \text{otherwise}
  \end{array}\right.
$,
denotes the Kronecker delta function. Note that we have defined the $\tilde{W}_j^{\pm}$ as row vectors, and $\tilde{W}_j^{\pm}$
has the form $\tilde{W}_j^{\pm}(x;\lambda)=\tilde{V}_j^{\pm}(x;\lambda)e^{\tilde{\mu}_j^{\pm}x}$.

According to \eqref{5.19}, we have $\tilde{\mu}_j^{\pm}=-\mu_j^{\pm}$ and $\tilde{V}_j^{\pm}\mathcal{S}V_k^{\pm}=\delta_{jk}$,
for all $\lambda$. In accordance with \eqref{5.10}-\eqref{5.13}, we define the dual subspace spanned by
\begin{eqnarray*}
\tilde{\phi}^+(x) &=& \tilde{W}_1^+(x;\lambda) =\tilde{V}_1^+(x;\lambda) e^{\tilde{\mu}_1^+ x}
=\tilde{V}_1^+(x;\lambda) e^{-\mu_1^+ x},\text{ for } x>0 \\
\tilde{\phi}^-(x) &=& \tilde{W}_2^-(x;\lambda) =\tilde{V}_2^-(x;\lambda) e^{\tilde{\mu}_2^- x}
=\tilde{V}_2^-(x;\lambda) e^{-\mu_2^- x},\text{ for } x<0
\end{eqnarray*}
as the \textit{growing subspace} and the dual subspace spanned by
\begin{eqnarray*}
\tilde{\psi}^+(x) &=& \tilde{W}_2^+(x;\lambda) =\tilde{V}_2^+(x;\lambda) e^{\tilde{\mu}_2^+ x}
=\tilde{V}_2^+(x;\lambda) e^{-\mu_2^+ x},\text{ for } x>0 \\
\tilde{\psi}^-(x) &=& \tilde{W}_1^-(x;\lambda) =\tilde{V}_1^-(x;\lambda) e^{\tilde{\mu}_1^- x}
=\tilde{V}_1^-(x;\lambda) e^{-\mu_1^- x},\text{ for } x<0
\end{eqnarray*}
as the \textit{decaying subspace}. We may define dual exponentially decaying and growing solutions $\tilde{\phi}^{\pm}$ and
$\tilde{\psi}^{\pm}$ via
\begin{eqnarray} \label{5.25}
 \begin{array}{l l}
 \tilde{\phi}^{\pm}\mathcal{S}\phi^{\pm}=1; & \tilde{\phi}^{\pm}\mathcal{S}\psi^{\pm}=0,\\
 \tilde{\psi}^{\pm}\mathcal{S}\phi^{\pm}=0; & \tilde{\psi}^{\pm}\mathcal{S}\psi^{\pm}=1.
 \end{array}
\end{eqnarray}
or written as matrix form
$
 \begin{pmatrix}
 \tilde{\phi}^{\pm}\\
 \tilde{\psi}^{\pm}
 \end{pmatrix}\mathcal{S}
 \left( \phi^{\pm}, \psi^{\pm} \right)
=I.
$

From $(L-\lambda I)G_\lambda(x,y)=\delta_y(x)I$, $(L-\lambda I)^\ast H_\lambda(x,y)=\delta_y(x)I$ and Lemma \ref{Lem5.1} we know that
$\begin{pmatrix}
G_\lambda(x,y)\\
G_{\lambda,x}(x,y)
\end{pmatrix}$
viewed as a function of $x$ satisfies \eqref{2.2}(differentiating with respect to $x$), while $(G_\lambda(x,y),G_{\lambda,y}(x,y))$ viewed as function of $y$ satisfies \eqref{5.17}(differentiating with respect to $y$). Furthermore, note that both $G_\lambda(x,\cdot)$ and $G_\lambda(\cdot,y)$ decay at $\pm\infty$ for $\lambda$ on the resolvent set, since $|(L-\lambda I)^{-1}|<\infty$ and $|(L-\lambda I)^{\ast -1}|<\infty$ imply $\|G_\lambda(\cdot,y)\|_{L^{1}(x)}<\infty$ and $\|G_\lambda(x,\cdot)\|_{L^{1}(y)}<\infty$ respectively. Combining, we have the representation
\begin{eqnarray} \label{5.26}
 \begin{pmatrix}
 G_\lambda & G_{\lambda,y}\\
 G_{\lambda,x} & G_{\lambda,xy}
 \end{pmatrix}
=\left\{
 \begin{array}{l l}
 \phi^+(x;\lambda)m^+(\lambda)\tilde{\psi}^-(y;\lambda) \quad \text{for $x>y$;}\\
 -\phi^-(x;\lambda)m^-(\lambda)\tilde{\psi}^+(y;\lambda) \quad \text{for $x<y$,}
 \end{array}
 \right.
\end{eqnarray}
where numbers $m^{\pm}(\lambda)$ are to be determined.

\begin{lem} \label{Lem5.3}
 \begin{eqnarray*}
  \begin{bmatrix}
   G_\lambda & G_{\lambda,y}\\
   G_{\lambda,x} & G_{\lambda,xy}
  \end{bmatrix}_{(y)}
 =\begin{pmatrix}
   0 & -1\\
   1 & -f^{\prime}(\bar{u})
  \end{pmatrix}
 =\mathcal{S}^{-1},
 \end{eqnarray*}
where $[f(x)]_{(y)}$ denotes the jump in $f(x)$ at $x=y$, and $\mathcal{S}$ is as in $\mathrm{Lemma}$ $\ref{Lem5.2}$.
\end{lem}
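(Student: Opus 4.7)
My plan is to read off the four scalar entries of the jump matrix from the defining relation $(L - \lambda I) G_\lambda(\cdot, y) = \delta_y(x)$ and its $y$-derivative, matching distributional singularities at $x = y$ entry by entry.

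Writing out $L - \lambda I$, the defining equation becomes
\[
G_{\lambda,xx} - f'(\bar u) G_{\lambda,x} - f''(\bar u)\bar u_x\, G_\lambda - \lambda G_\lambda = \delta_y(x).
\]
The absence of a $\delta'$-term on the right forces $G_\lambda$ to be continuous in $x$ at $x = y$, so $[G_\lambda]_{(y)} = 0$. Integrating the equation from $y - \varepsilon$ to $y + \varepsilon$ and letting $\varepsilon \to 0$ then yields $[G_{\lambda,x}]_{(y)} = 1$, as all the non-delta contributions vanish in the limit.

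For the second column, I would differentiate the defining identity in $y$; since $L - \lambda I$ acts only in $x$ and $\partial_y \delta(x - y) = -\partial_x \delta(x - y)$, one obtains
\[
(L - \lambda I)\, G_{\lambda,y}(\cdot, y) = -\delta'_y(x).
\]
Set $\alpha := [G_{\lambda,y}]_{(y)}$ and $\beta := [G_{\lambda,xy}]_{(y)}$. Treating $G_{\lambda,y}$ as a piecewise smooth function with a jump of size $\alpha$ at $x = y$ whose classical $x$-derivative jumps by $\beta$, the distributional derivatives produce singular contributions $\alpha\,\delta'_y + \beta\,\delta_y$ from $\partial_x^2 G_{\lambda,y}$ and $-f'(\bar u(y))\,\alpha\,\delta_y$ from $-\partial_x(f'(\bar u)\,G_{\lambda,y})$; the lower-order terms $-f''(\bar u)\bar u_x\,G_{\lambda,y}$ and $-\lambda G_{\lambda,y}$ contribute nothing singular. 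Matching singular parts against $-\delta'_y(x)$ forces $\alpha = -1$ (from the $\delta'$ coefficient) and $\beta - f'(\bar u(y))\,\alpha = 0$, so $\beta = -f'(\bar u(y))$.

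Assembling the four entries yields
\[
\left[\begin{pmatrix} G_\lambda & G_{\lambda,y} \\ G_{\lambda,x} & G_{\lambda,xy} \end{pmatrix}\right]_{(y)} = \begin{pmatrix} 0 & -1 \\ 1 & -f'(\bar u(y)) \end{pmatrix},
\]
and a direct computation (using $\det \mathcal{S} = 1$) identifies this with $\mathcal{S}^{-1}$. The only delicate step is the distributional bookkeeping in the second column, namely tracking which derivative of $G_{\lambda,y}$ carries the $\delta'$ versus $\delta$ singularity. An independent sanity check is available through Lemma \ref{Lem5.1}: viewing $G_\lambda(x,\cdot)$ as a solution of the adjoint equation \eqref{5.16} with source at $y = x$ and integrating in $y$ reproduces the jumps $[G_\lambda]=0$, $[G_{\lambda,y}]=1$ in the $y$-direction, which upon reversing the crossing direction recover the $x$-direction values above.
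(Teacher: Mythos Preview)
Your argument is correct and reaches the same conclusion, but by a different route than the paper. For the first column you and the paper agree: read off $[G_\lambda]_{(y)}=0$ and $[G_{\lambda,x}]_{(y)}=1$ directly from the delta source. For the second column the paper does \emph{not} differentiate the defining equation in $y$; instead it differentiates the scalar identity $[G_\lambda]_{(y)}=G_\lambda^{x>y}(y,y)-G_\lambda^{x<y}(y,y)$ (as a function of $y$) once to get $[G_{\lambda,x}]_{(y)}+[G_{\lambda,y}]_{(y)}=0$, hence $[G_{\lambda,y}]_{(y)}=-1$, and then a second time to obtain $[G_{\lambda,xy}]_{(y)}=-\tfrac12\bigl([G_{\lambda,xx}]_{(y)}+[G_{\lambda,yy}]_{(y)}\bigr)$. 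It then evaluates the two second-derivative jumps by invoking the forward ODE \eqref{2.1} and the adjoint ODE \eqref{5.16} separately, each giving $f'(\bar u(y))$.

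Your method---apply $\partial_y$ to the source equation and match the $\delta'$ and $\delta$ coefficients---is more compact and never needs the adjoint equation explicitly; the distributional bookkeeping you flagged is indeed the only point requiring care, and you have it right. The paper's approach, on the other hand, makes visible the symmetric role of the forward and dual problems (Lemmas \ref{Lem5.1}--\ref{Lem5.2}), which is the organizing principle of the whole section. Both are sound; yours is shorter, theirs more thematic.
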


\begin{proof}
Expanding $\delta_y(x)=(L-\lambda I)G_\lambda=G_{\lambda,xx}-(f^{\prime}(\bar{u})G_\lambda)_x-\lambda G_\lambda$, and comparing orders of singularity, we find that
$(f^{\prime}(\bar{u})G_\lambda)_x+\lambda G_\lambda=0 \quad \text{and} \quad G_{\lambda,xx}=\delta_{y}(x)$,
giving, respectively,
$[G_{\lambda}]_{(y)}=0 \quad \text{and} \quad [G_{\lambda,x}]_{(y)}=1$.
Note further that we can expand $[G_{\lambda}]_{(y)}$ as
\begin{eqnarray} \label{5.27}
[G_\lambda]_{(y)}=[G_\lambda(\cdot,y)]_{(y)}=G_{\lambda}^{x>y}(y,y)-G_{\lambda}^{x<y}(y,y),
\end{eqnarray}
where $G_{\lambda}^{x>y}$ and $G_{\lambda}^{x<y}$ are the smooth functions denoting the value of $G_{\lambda}$ on the regions $x>y$ and $x<y$, respectively, i.e.
$$G_{\lambda}(x,y)=\left\{
\begin{array}{l l}
G_{\lambda}^{x>y}(x,y) \quad \text{for $x>y$;}\\
G_{\lambda}^{x<y}(x,y) \quad \text{for $x<y$,}
\end{array}\right.
$$
Differentiating \eqref{5.27} in $y$, we obtain
\begin{eqnarray*}
0=\frac{d}{dy}[G_{\lambda}]_{(y)}
&=& G_{\lambda,x}^{x>y}(y,y)+G_{\lambda,y}^{x>y}(y,y)
   -G_{\lambda,x}^{x<y}(y,y)-G_{\lambda,y}^{x<y}(y,y)\\
&=&[G_{\lambda,x}]_{(y)}+[G_{\lambda,y}]_{(y)}
\end{eqnarray*}
hence
$[G_{\lambda,y}]_{(y)}=-1$.

Differentiating $0=[G_{\lambda,x}]_{(y)}+[G_{\lambda,y}]_{(y)}$ a second time, we then find
\begin{eqnarray*}
0&=&[G_{\lambda,xx}]_{(y)}+[G_{\lambda,xy}]_{(y)}+[G_{\lambda,yx}]_{(y)}+[G_{\lambda,yy}]_{(y)}\\
 &=&[G_{\lambda,xx}]_{(y)}+[G_{\lambda,yy}]_{(y)}+2[G_{\lambda,xy}]_{(y)}
\end{eqnarray*}
Solve for $[G_{\lambda,xy}]_{(y)}$ to find that
\begin{eqnarray} \label{5.28}
[G_{\lambda,xy}]_{(y)}=-\frac{1}{2}\left([G_{\lambda,xx}]_{(y)}+[G_{\lambda,yy}]_{(y)}\right).
\end{eqnarray}
Finally, we can determine $[G_{\lambda,xx}]_{(y)}$ and $[G_{\lambda,yy}]_{(y)}$ by solving the ODE \eqref{2.1} and \eqref{5.16} to express
\begin{eqnarray*}
G_{\lambda,xx}&=&f^{\prime}(\bar{u})G_{\lambda,x}+(f^{\prime}(\bar{u}))_x G_\lambda;\\
G_{\lambda,yy}&=&-f^{\prime}(\bar{u})G_{\lambda,y}+\lambda G_{\lambda}.
\end{eqnarray*}
and then
\begin{eqnarray*}
\left[G_{\lambda,xx}\right]_{(y)}&=&f^{\prime}(\bar{u}(y))\left[G_{\lambda,x}\right]_{(y)}
        +f^{\prime\prime}(\bar{u}(y))\bar{u}^{\prime}(y) \left[G_\lambda\right]_{(y)}=f^{\prime}(\bar{u}(y));\\
\left[G_{\lambda,yy}\right]_{(y)}&=&-f^{\prime}(\bar{u}(y))\left[G_{\lambda,y}\right]_{(y)}
        +\lambda \left[G_{\lambda}\right]_{(y)}=f^{\prime}(\bar{u}(y)).
\end{eqnarray*}
so finally \eqref{5.28} gives
$\left[G_{\lambda,xy}\right]_{(y)}=-f^{\prime}(\bar{u}(y))$.

Then
 \begin{eqnarray*}
  \begin{bmatrix}
   G_\lambda & G_{\lambda,y}\\
   G_{\lambda,x} & G_{\lambda,xy}
  \end{bmatrix}_{(y)}
 =\begin{pmatrix}
   0 & -1\\
   1 & -f^{\prime}(\bar{u})
  \end{pmatrix}
 =\mathcal{S}^{-1},
 \end{eqnarray*}
as claimed.
\end{proof}

Combining Lemma \ref{Lem5.3} with \eqref{5.26}, we have
\begin{eqnarray*}
\left(\phi^+(y),\phi^-(y)\right)
\begin{pmatrix}
m^+(\lambda) &             0\\
           0 & m^-(\lambda)
\end{pmatrix}
\begin{pmatrix}
\tilde{\psi}^-(y)\\
\tilde{\psi}^+(y)
\end{pmatrix}
=\mathcal{S}^{-1}, \quad \text{or}
\end{eqnarray*}
\begin{eqnarray*}
 \begin{pmatrix}
  m^+(\lambda) &             0\\
             0 & m^-(\lambda)
 \end{pmatrix}
&=& \left(\phi^+(y),\phi^-(y)\right)^{-1}\mathcal{S}^{-1}
  \begin{pmatrix}
  \tilde{\psi}^-(y)\\
  \tilde{\psi}^+(y)
  \end{pmatrix}^{-1}\\
&=& \left(
  \begin{pmatrix}
  \tilde{\psi}^-(y)\\
  \tilde{\psi}^+(y)
  \end{pmatrix}
  \mathcal{S}
  \left(\phi^+(y),\phi^-(y)\right)
    \right)^{-1}\\
&=& \begin{pmatrix}
    \tilde{\psi}^-\mathcal{S}\phi^+ & \tilde{\psi}^-\mathcal{S}\phi^-\\
    \tilde{\psi}^+\mathcal{S}\phi^+ & \tilde{\psi}^+\mathcal{S}\phi^-
    \end{pmatrix}^{-1}(y)\\
&=& \begin{pmatrix}
    \tilde{\psi}^-\mathcal{S}\phi^+ &                               0\\
                                  0 & \tilde{\psi}^+\mathcal{S}\phi^-
    \end{pmatrix}^{-1}(y)\\
&=& \begin{pmatrix}
    \frac{1}{\tilde{\psi}^-\mathcal{S}\phi^+} &                                         0\\
                                            0 & \frac{1}{\tilde{\psi}^+\mathcal{S}\phi^-}
    \end{pmatrix},
\end{eqnarray*}
so
\begin{eqnarray*}
m^+(\lambda)&=& \frac{1}{\tilde{\psi}^-\mathcal{S}\phi^+}
             =  \frac{1}{\tilde{V}_1^-(x;\lambda)\mathcal{S}V_1^+(x;\lambda)e^{(\mu_1^{+}-\mu_1^{-})x}},\\
m^-(\lambda)&=&\frac{1}{\tilde{\psi}^+\mathcal{S}\phi^-}
             = \frac{1}{\tilde{V}_2^+(x;\lambda)\mathcal{S}V_2^-(x;\lambda)e^{(\mu_2^{-}-\mu_2^{+})x}}.
\end{eqnarray*}
We now introduce the notations,
\begin{equation*}
\begin{aligned}
\Phi:=\left(\phi^+,\phi^-\right), \quad
\Psi:=\left(\psi^-,\psi^+\right), \quad
\tilde{\Psi}:=\begin{pmatrix}
                 \tilde{\psi}^-\\
                 \tilde{\psi}^+
                \end{pmatrix}, \quad
\tilde{\Phi}:=\begin{pmatrix}
                 \tilde{\phi}^+\\
                 \tilde{\phi}^-
                 \end{pmatrix}.
\end{aligned}
\end{equation*}

\begin{pro} \label{Pro5.4}
The resolvent kernel may be expressed as
\begin{eqnarray*}
 \begin{pmatrix}
 G_\lambda & G_{\lambda,y}\\
 G_{\lambda,x} & G_{\lambda,xy}
 \end{pmatrix}
=\left\{
 \begin{array}{l l}
 \phi^+(x;\lambda)m^+(\lambda)\tilde{\psi}^-(y;\lambda) \quad \mathrm{for} \quad x>y;\\
 -\phi^-(x;\lambda)m^-(\lambda)\tilde{\psi}^+(y;\lambda) \quad \mathrm{for} \quad x<y,
 \end{array}
 \right.
\end{eqnarray*}
where
$M(\lambda):=\mathrm{diag}(m^+(\lambda),m^-(\lambda))=\Phi^{-1}(z;\lambda)\mathcal{S}^{-1}(z)\tilde{\Psi}^{-1}(z;\lambda)$.
\end{pro}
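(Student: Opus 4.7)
The plan is to assemble Proposition~\ref{Pro5.4} from three ingredients already in place: the structural ansatz \eqref{5.26}, the jump computation of Lemma~\ref{Lem5.3}, and the duality relations \eqref{5.25}. First I would justify the ansatz \eqref{5.26} rigorously. Viewed as a function of $x$ for $x\neq y$, the column $(G_\lambda,G_{\lambda,x})^t$ satisfies the homogeneous equation \eqref{2.2}, and the resolvent bound $\|G_\lambda(\cdot,y)\|_{L^1(x)}<\infty$ forces decay at $\pm\infty$; hence for $x>y$ this column lies in $\operatorname{Span}\{\phi^+\}$ and for $x<y$ in $\operatorname{Span}\{\phi^-\}$. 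Viewed as a function of $y$, the row $(G_\lambda,G_{\lambda,y})$ satisfies the dual system \eqref{5.17} and likewise decays, so for $x>y$ (i.e.\ $y<x$) it lies in $\operatorname{Span}\{\tilde{\psi}^-\}$ and for $x<y$ in $\operatorname{Span}\{\tilde{\psi}^+\}$. These two constraints force a rank-one tensor form in each half, yielding \eqref{5.26} with scalar coefficients $m^\pm(\lambda)$.

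Next I would pin down $m^\pm$. Taking the jump of \eqref{5.26} across $x=y$ and invoking Lemma~\ref{Lem5.3} gives
\begin{equation*}
\phi^+(y;\lambda)\,m^+(\lambda)\,\tilde{\psi}^-(y;\lambda)+\phi^-(y;\lambda)\,m^-(\lambda)\,\tilde{\psi}^+(y;\lambda)=\mathcal{S}^{-1}(y),
\end{equation*}
which in terms of $\Phi=(\phi^+,\phi^-)$ and $\tilde{\Psi}=(\tilde{\psi}^-,\tilde{\psi}^+)^t$ reads $\Phi(y)\,M(\lambda)\,\tilde{\Psi}(y)=\mathcal{S}^{-1}(y)$, and therefore $M(\lambda)=\Phi^{-1}(y)\,\mathcal{S}^{-1}(y)\,\tilde{\Psi}^{-1}(y)$. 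The duality relations \eqref{5.25} tell us that $\tilde{\Psi}\mathcal{S}\Phi$ is diagonal with entries $\tilde{\psi}^-\mathcal{S}\phi^+$ and $\tilde{\psi}^+\mathcal{S}\phi^-$, confirming that $M$ is indeed diagonal with
\begin{equation*}
m^+(\lambda)=\frac{1}{\tilde{\psi}^-\mathcal{S}\phi^+},\qquad m^-(\lambda)=\frac{1}{\tilde{\psi}^+\mathcal{S}\phi^-}.
\end{equation*}

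Finally, I would verify that the right-hand side of the formula for $M(\lambda)$ is independent of the evaluation point $z$. This is where Lemma~\ref{Lem5.2} does its work: each scalar product $\tilde{W}_j^\pm \mathcal{S}W_k^\pm$ is constant in $x$, so the denominators in $m^\pm$ — and consequently $\Phi^{-1}\mathcal{S}^{-1}\tilde{\Psi}^{-1}$ — are independent of the variable at which we evaluate. Plugging in the explicit exponential forms $V_j^\pm e^{\mu_j^\pm x}$ and $\tilde{V}_j^\pm e^{-\mu_j^\pm x}$ makes this cancellation manifest and produces the displayed expressions in the statement.

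The main obstacle I anticipate is not any single calculation but rather keeping the row/column conventions and the dual pairings bookkeeping-consistent: one must be careful that the decomposition \eqref{5.26} is read as (column in $x$) $\times$ (scalar) $\times$ (row in $y$), that the off-diagonal pairings $\tilde{\psi}^-\mathcal{S}\phi^-$ and $\tilde{\psi}^+\mathcal{S}\phi^+$ genuinely vanish (which requires \eqref{5.25} together with the Gap Lemma control of $\phi^\pm,\tilde{\psi}^\pm$ across the full line, not just near $\pm\infty$), and that the $x$-independence from Lemma~\ref{Lem5.2} is applied to the correct combinations. Once the bookkeeping is in order, the proof is essentially the algebra above.
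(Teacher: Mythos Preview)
Your proposal is correct and follows essentially the same route as the paper: the paper's argument is precisely the combination of the ansatz \eqref{5.26} (justified by decay and the forward/dual ODEs), the jump relation of Lemma~\ref{Lem5.3}, and the algebraic inversion $\Phi M\tilde\Psi=\mathcal{S}^{-1}$ with \eqref{5.25} forcing the off-diagonal entries of $\tilde\Psi\mathcal{S}\Phi$ to vanish. One small quibble: the vanishing of $\tilde\psi^\pm\mathcal{S}\phi^\pm$ is immediate from the defining relations \eqref{5.25} alone and does not require any Gap Lemma input beyond the existence of the modes.
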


From Proposition \ref{Pro5.4}, we obtain the following scattering decomposition,

\begin{cor} \label{Cor5.5}
On $\Lambda\cap\rho(L)$,
\begin{eqnarray}
 \begin{pmatrix}
 G_\lambda & G_{\lambda,y}\\
 G_{\lambda,x} & G_{\lambda,xy}
 \end{pmatrix}
=m^+(\lambda)\phi^+(x;\lambda)\tilde{\psi}^-(y;\lambda)
\end{eqnarray}
for $y \leq 0 \leq x$,
\begin{eqnarray} \label{5.38}
 \begin{pmatrix}
 G_\lambda & G_{\lambda,y}\\
 G_{\lambda,x} & G_{\lambda,xy}
 \end{pmatrix}
=d^+(\lambda)\phi^-(x;\lambda)\tilde{\psi}^-(y;\lambda)+\psi^-(x;\lambda)\tilde{\psi}^-(y;\lambda)
\end{eqnarray}
for $y \leq x \leq 0$, where
\begin{equation} \label{5.40}
m^+=(1,0)(\phi^+,\phi^-)^{-1}\psi^-, \quad d^+=-(0,1)(\phi^+,\phi^-)^{-1}\psi^{-}
\end{equation}
\begin{eqnarray}
 \begin{pmatrix}
 G_\lambda & G_{\lambda,y}\\
 G_{\lambda,x} & G_{\lambda,xy}
 \end{pmatrix}
=-m^-(\lambda)\phi^-(x;\lambda)\tilde{\psi}^+(y;\lambda)
\end{eqnarray}
for $x \leq 0 \leq y$,
\begin{eqnarray}
 \begin{pmatrix}
 G_\lambda & G_{\lambda,y}\\
 G_{\lambda,x} & G_{\lambda,xy}
 \end{pmatrix}
=d^-(\lambda)\phi^-(x;\lambda)\tilde{\psi}^-(y;\lambda)-\phi^-(x;\lambda)\tilde{\phi}^-(y;\lambda)
\end{eqnarray}
for $x \leq y \leq 0$, where
\begin{equation*}
m^-=\tilde{\phi}^- \begin{pmatrix}\tilde{\psi}^- \\ \tilde{\psi}^+ \end{pmatrix}^{-1}
\begin{pmatrix}0 \\ 1\end{pmatrix}
, \quad
d^-(\lambda)=\tilde{\phi}^{-}\begin{pmatrix}\tilde{\psi}^- \\ \tilde{\psi}^+ \end{pmatrix}^{-1}
             \begin{pmatrix}1 \\ 0 \end{pmatrix}
\end{equation*}
\end{cor}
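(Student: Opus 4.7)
The plan is to derive the four region-wise formulas of the corollary directly from Proposition \ref{Pro5.4}, by expanding either $\phi^+$ (in the space variable $x$) or a dual row-mode (in $y$) into whichever basis of solutions is natural on the left half-line, and then using biorthogonality \eqref{5.19}--\eqref{5.25} to reconcile the new scattering-coefficient scalars $m^\pm$ of the corollary with the Wronskian-type scalars $m^\pm$ of Proposition \ref{Pro5.4}. The two ``mixed-sign'' cases are immediate: for $y\leq 0\leq x$ one has $x>y$ and for $x\leq 0\leq y$ one has $x<y$, so the asserted formulas are simply the two branches of Proposition \ref{Pro5.4} read off verbatim, with no further work needed. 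Only the two ``same-sign'' regions require a linear-algebra manipulation.

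For $y\leq x\leq 0$ we still have $x>y$, so Proposition \ref{Pro5.4} writes the kernel as $\phi^+(x;\lambda)\,m^+(\lambda)\,\tilde{\psi}^-(y;\lambda)$. Since $\phi^+$ solves \eqref{2.2} on all of $\mathbb{R}$, on the left half-line it admits an expansion in the basis $\{\phi^-,\psi^-\}$. Writing $(\phi^+,\phi^-)^{-1}\psi^- = \binom{m^+}{-d^+}$ in accordance with \eqref{5.40} rearranges to $\psi^- = m^+\phi^+ - d^+\phi^-$, hence $m^+\phi^+ = \psi^- + d^+\phi^-$, and substituting into the kernel formula produces \eqref{5.38} exactly.

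For $x\leq y\leq 0$ we instead have $x<y$, and Proposition \ref{Pro5.4} gives $-\phi^-(x;\lambda)\,m^-(\lambda)\,\tilde{\psi}^+(y;\lambda)$. Here the adjustment occurs on the $y$-side: the row-vector solution $\tilde{\phi}^-$ of \eqref{5.17} lives on all of $\mathbb{R}$, and on the left half-line it may be expanded in the dual basis $\{\tilde{\psi}^-,\tilde{\psi}^+\}$. The definitions of $m^-$ and $d^-$ in the statement of the corollary amount exactly to the expansion $\tilde{\phi}^- = d^-\tilde{\psi}^- + m^-\tilde{\psi}^+$, so $m^-\tilde{\psi}^+ = \tilde{\phi}^- - d^-\tilde{\psi}^-$; substitution then yields $d^-\phi^-(x)\tilde{\psi}^-(y) - \phi^-(x)\tilde{\phi}^-(y)$, as asserted.

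The main obstacle I expect is purely notational: the symbols $m^\pm$ are overloaded, since Proposition \ref{Pro5.4} defined them as Wronskian-type inverses $1/(\tilde{\psi}^-\mathcal{S}\phi^+)$ and $1/(\tilde{\psi}^+\mathcal{S}\phi^-)$, while the corollary re-defines them as scattering coefficients. The reconciliation is a single application of the duality pairing \eqref{5.25}: multiplying $\psi^- = m^+\phi^+ - d^+\phi^-$ on the left by $\tilde{\psi}^-\mathcal{S}$ and using $\tilde{\psi}^-\mathcal{S}\psi^-=1$, $\tilde{\psi}^-\mathcal{S}\phi^-=0$ recovers $m^+ = 1/(\tilde{\psi}^-\mathcal{S}\phi^+)$, and the analogous dualization on the right (using $\tilde{\phi}^-\mathcal{S}\phi^-=1$ and $\tilde{\psi}^-\mathcal{S}\phi^-=0$) recovers $m^- = 1/(\tilde{\psi}^+\mathcal{S}\phi^-)$. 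With these identifications in place, no further analytic content is required and the proof reduces to biorthogonality bookkeeping.
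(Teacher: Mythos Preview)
Your proposal is correct and follows essentially the same approach as the paper: both start from Proposition~\ref{Pro5.4} and, for the same-sign regions, expand $\phi^+$ (respectively a dual mode) in the $\{\phi^-,\psi^-\}$ basis on the left half-line, then identify the resulting coefficients with the formulas~\eqref{5.40}. The only cosmetic difference is that you read the relation $\psi^- = m^+\phi^+ - d^+\phi^-$ directly off the matrix definition~\eqref{5.40} and substitute, whereas the paper introduces intermediate expansion coefficients $a^+,b^+$ with $\phi^+ = a^+\phi^- + b^+\psi^-$, sets $d^+=a^+m^+$, $e^+=b^+m^+$, and then verifies $\binom{d^+}{e^+}=\binom{0}{1}-\begin{pmatrix}0&1\\0&0\end{pmatrix}(\phi^+,\phi^-)^{-1}\psi^-$ by an explicit matrix computation; your route is slightly more direct but the content is identical.
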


\begin{proof}
We may express $m^+$ using the duality relation \eqref{5.25} as
\begin{equation*}
\begin{aligned}
m^+&=(1,0)(\phi^+,\phi^-)^{-1}\mathcal{S}^{-1}
 \begin{pmatrix}
 \tilde{\psi}^- \\
 \tilde{\phi}^-
 \end{pmatrix}^{-1}
 \begin{pmatrix}
 1 \\
 0
 \end{pmatrix}
 =(1,0)(\phi^+,\phi^-)^{-1}\left(
 \begin{pmatrix}
 \tilde{\psi}^- \\
 \tilde{\phi}^-
 \end{pmatrix}
 \mathcal{S}
 \right)^{-1}
 \begin{pmatrix}
 1 \\
 0
 \end{pmatrix}\\
&=(1,0)(\phi^+,\phi^-)^{-1}(\psi^-,\phi^-)
 \begin{pmatrix}
 1 \\
 0
 \end{pmatrix}
 =(1,0)(\phi^+,\phi^-)^{-1}\psi^-
\end{aligned}
\end{equation*}

Next, expressing $\phi^+(x;\lambda)$ as a linear combination of basis elements at $-\infty$,
$
\phi^+(x;\lambda)=a^+(\lambda)\phi^-(x;\lambda)+b^+(\lambda)\psi^-(x;\lambda)
=(\phi^-,\psi^-)
 \begin{pmatrix}
 a^+ \\
 b^+
 \end{pmatrix}
$,
so we get
$
 \begin{pmatrix}
 a^+ \\
 b^+
 \end{pmatrix}
=(\phi^-,\psi^-)^{-1}\phi^{+}(x;\lambda)
=\begin{pmatrix}
 \tilde{\phi}^- \\
 \tilde{\psi}^-
 \end{pmatrix} \mathcal{S}\phi^+
$,
then we can represent $ \begin{pmatrix}
 G_\lambda & G_{\lambda,y}\\
 G_{\lambda,x} & G_{\lambda,xy}
 \end{pmatrix}$ as
\begin{align*}
 \begin{pmatrix}
 G_\lambda & G_{\lambda,y}\\
 G_{\lambda,x} & G_{\lambda,xy}
 \end{pmatrix}
&= m^+(\lambda)\phi^+(x;\lambda)\tilde{\psi}^-(y;\lambda)\\
&= m^+(\lambda)[a^+(\lambda)\phi^-(x;\lambda)+b^+(\lambda)\psi^-(x;\lambda)]\tilde{\psi}^-(y;\lambda)\\
&= a^+(\lambda)m^+(\lambda)\phi^-(x;\lambda)\tilde{\psi}^-(y;\lambda)
  +b^+(\lambda)m^+(\lambda)\psi^-(x;\lambda)\tilde{\psi}^-(y;\lambda)\\
&= d^+(\lambda)\phi^-(x;\lambda)\tilde{\psi}^-(y;\lambda)
  +e^+(\lambda)\psi^-(x;\lambda)\tilde{\psi}^-(y;\lambda)
\end{align*}
where $d^+=a^+ m^+$ and $e^+=b^+ m^+$, and can be computed as follows,
\begin{align*}
 \begin{pmatrix}
 d^+ \\
 e^+
 \end{pmatrix}
&=
 \begin{pmatrix}
 a^+ \\
 b^+
 \end{pmatrix}m^+
 =
 \begin{pmatrix}
 a^+ \\
 b^+
 \end{pmatrix}(1,0)(\phi^+,\phi^-)^{-1}\psi^-\\
&= \begin{pmatrix}
 \tilde{\phi}^- \\
 \tilde{\psi}^-
 \end{pmatrix} \mathcal{S}\phi^+ (1,0)(\phi^+,\phi^-)^{-1}\psi^-
 =\begin{pmatrix}
 \tilde{\phi}^- \\
 \tilde{\psi}^-
 \end{pmatrix} \mathcal{S} (\phi^+,0)(\phi^+,\phi^-)^{-1}\psi^-\\
&= (\phi^-,\psi^-)^{-1} (\phi^+,0)(\phi^+,\phi^-)^{-1}\psi^-\\
&= (\phi^-,\psi^-)^{-1}\left( I_2-(0,\phi^-)(\phi^+,\phi^-)^{-1}\right)\psi^-\\
&= (\phi^-,\psi^-)^{-1}\psi^{-} - (\phi^-,\psi^-)^{-1}(0,\phi^-)(\phi^+,\phi^-)^{-1}\psi^-\\
&= (\phi^-,\psi^-)^{-1}(\phi^-,\psi^-)\begin{pmatrix} 0 \\ 1 \end{pmatrix}
   -(\phi^-,\psi^-)^{-1}(\phi^-,\psi^-)\begin{pmatrix}0 & 1 \\ 0 & 0 \end{pmatrix}(\phi^+,\phi^-)^{-1}\psi^-\\
&= \begin{pmatrix} 0 \\ 1 \end{pmatrix} - \begin{pmatrix}0 & 1 \\ 0 & 0 \end{pmatrix}(\phi^+,\phi^-)^{-1}\psi^-
\end{align*}
yielding \eqref{5.38} and \eqref{5.40}.
\end{proof}

\section{Low-frequency expansions}

\begin{lem}
For $\lambda \in \Lambda$, the matrix $\mathbb{A}_{\pm}(\lambda)$ in \eqref{2.5} has eigenvalues
$\mu_1^{\pm}(\lambda)<0<\mu_2^{\pm}(\lambda)$ (with ordering referring to real parts) such that the eigenspaces
$S^{\pm}(\lambda)$ and $U^{\pm}(\lambda)$ associated to $\mu_1^{\pm}(\lambda)$ and $\mu_2^{\pm}(\lambda)$ respectively depend
analytically on $\lambda$. Furthermore, for each $j=1,2$, there is an analytic extension of $\mu_j^{\pm}(\lambda)$ to some
neighborhood $N$ of $\lambda=0$. For $\lambda \in N$ there also exists an analytic choice of an individual eigenvector
$V_j^{\pm}=\begin{pmatrix} v_j^{\pm}\\ \mu_j^{\pm}v_j^{\pm} \end{pmatrix}$ corresponding to each eigenvalue $\mu_j^{\pm}(\lambda)$, and they satisfy the following asymptotic descriptions:
\begin{eqnarray*}
\mu_1^{\pm}(\lambda)
=\left\{
 \begin{array}{l l}
 -\frac{\lambda}{f^{\prime}(\bar{u}_{\pm})}+\frac{\lambda ^2}{(f^{\prime}(\bar{u}_{\pm}))^3}+\mathbf{O}(\lambda^3),& \quad \mathrm{if} \quad f^{\prime}(\bar{u}_{\pm})>0;\\
  & \\
 f^{\prime}(\bar{u}_{\pm})+\mathbf{O}(\lambda),& \quad \mathrm{if} \quad f^{\prime}(\bar{u}_{\pm})<0,
 \end{array}
 \right. \\
\mu_2^{\pm}(\lambda)
=\left\{
 \begin{array}{l l}
 -\frac{\lambda}{f^{\prime}(\bar{u}_{\pm})}+\frac{\lambda ^2}{(f^{\prime}(\bar{u}_{\pm}))^3}+\mathbf{O}(\lambda^3),& \quad \mathrm{if} \quad f^{\prime}(\bar{u}_{\pm})<0;\\
  & \\
 f^{\prime}(\bar{u}_{\pm})+\mathbf{O}(\lambda),& \quad \mathrm{if} \quad f^{\prime}(\bar{u}_{\pm})>0,
 \end{array}
 \right.
\end{eqnarray*}
\begin{eqnarray*}
V_1^{\pm}(\lambda)
=\left\{
 \begin{array}{l l}
 \begin{pmatrix} 1+\mathbf{O}(\lambda) \\ -\frac{\lambda}{f^{\prime}(\bar{u}_{\pm})}+\mathbf{O}(\lambda^2) \end{pmatrix},& \quad \mathrm{if} \quad f^{\prime}(\bar{u}_{\pm})>0;\\
  & \\
 \begin{pmatrix} 1 \\ f^{\prime}(\bar{u}_{\pm}) \end{pmatrix}+\mathbf{O}(\lambda),& \quad \mathrm{if} \quad f^{\prime}(\bar{u}_{\pm})<0,
 \end{array}
 \right. \\
V_2^{\pm}(\lambda)
=\left\{
 \begin{array}{l l}
 \begin{pmatrix} 1+\mathbf{O}(\lambda) \\ -\frac{\lambda}{f^{\prime}(\bar{u}_{\pm})}+\mathbf{O}(\lambda^2) \end{pmatrix},& \quad \mathrm{if} \quad f^{\prime}(\bar{u}_{\pm})<0;\\
  & \\
 \begin{pmatrix} 1 \\ f^{\prime}(\bar{u}_{\pm}) \end{pmatrix}+\mathbf{O}(\lambda),& \quad \mathrm{if} \quad f^{\prime}(\bar{u}_{\pm})>0,
 \end{array}
 \right.
\end{eqnarray*}

The spectral projection operators $P_{S^{\pm}(\lambda)}$ and $P_{U^{\pm}(\lambda)}$ associated to the subspaces
$S^{\pm}(\lambda)$ and $U^{\pm}(\lambda)$ have analytic extensions to the neighborhood
$\Omega=\{\lambda: \mathrm{Re} \lambda >0 \}\cup N$.
\end{lem}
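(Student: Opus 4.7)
The plan is to derive everything directly from the explicit quadratic formula for the characteristic equation $\mu^2 - f'(\bar u_\pm)\mu - \lambda = 0$ of $\mathbb{A}_\pm(\lambda)$, whose roots are
\[
\mu^\pm_{1,2}(\lambda) = \frac{f'(\bar u_\pm) \mp \sqrt{(f'(\bar u_\pm))^2 + 4\lambda}}{2}.
\]
Label the roots so that on $\Lambda$ we have $\mathrm{Re}\,\mu_1^\pm<0<\mathrm{Re}\,\mu_2^\pm$; this ordering is provided by Lemma~\ref{Lem2.1} under the Lax condition. The principal branch of the square root is analytic on $\Lambda$ since the discriminant $(f'(\bar u_\pm))^2+4\lambda$ stays away from the branch cut on that region.

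The key local observation is that the discriminant is nonzero at $\lambda=0$ thanks to Lax ($f'(\bar u_\pm)\neq 0$), so $\sqrt{(f'(\bar u_\pm))^2+4\lambda}$ extends analytically to a small disk $N$ about $0$, yielding analytic extensions of each $\mu_j^\pm$. Binomial expansion gives
\[
\sqrt{(f'(\bar u_\pm))^2+4\lambda} = |f'(\bar u_\pm)|\Bigl(1 + \tfrac{2\lambda}{(f'(\bar u_\pm))^2} - \tfrac{2\lambda^2}{(f'(\bar u_\pm))^4} + \mathbf O(\lambda^3)\Bigr),
\]
and splitting into the cases $f'(\bar u_\pm)\gtrless 0$ yields the stated formulas: in each case one root (the \emph{slow mode}) is $-\lambda/f'(\bar u_\pm) + \lambda^2/(f'(\bar u_\pm))^3 + \mathbf O(\lambda^3)$ and the other (the \emph{fast mode}) is $f'(\bar u_\pm) + \mathbf O(\lambda)$, with the slow/fast assignment between $\mu_1^\pm$ and $\mu_2^\pm$ swapping with the sign of $f'(\bar u_\pm)$.

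Because $\mathbb{A}_\pm(\lambda)$ is a companion matrix, each eigenvector has the form $V_j^\pm=(v_j^\pm,\mu_j^\pm v_j^\pm)^t$ for an arbitrary nonzero scalar $v_j^\pm$; the analytic choice $v_j^\pm\equiv 1$, combined with the expansions above and a possible analytic nonvanishing renormalization, produces the two forms $(1,\,-\lambda/f'(\bar u_\pm)+\mathbf O(\lambda^2))^t$ for the slow mode and $(1,\,f'(\bar u_\pm))^t+\mathbf O(\lambda)$ for the fast mode.

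For the spectral projections, the step to verify is distinctness of $\mu_1^\pm$ and $\mu_2^\pm$ throughout $\Omega=\{\mathrm{Re}\,\lambda>0\}\cup N$: on $\{\mathrm{Re}\,\lambda>0\}\subset\Lambda$ the real parts have opposite signs by Lemma~\ref{Lem2.1}, while on $N$ the roots $0$ and $f'(\bar u_\pm)$ at $\lambda=0$ are separated by $|f'(\bar u_\pm)|>0$ and remain distinct in a neighborhood. The Riesz projection
\[
P_{S^\pm(\lambda)} = \frac{1}{2\pi i}\oint_{\gamma(\lambda)}(\zeta I-\mathbb{A}_\pm(\lambda))^{-1}\,d\zeta,
\]
with $\gamma$ a small loop enclosing $\mu_1^\pm(\lambda)$ alone, then provides the analytic extension of $P_{S^\pm}$ (and symmetrically $P_{U^\pm}$) to all of $\Omega$. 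The only mildly subtle step, and what I expect to be the main ``obstacle,'' is branch matching of the square root between $N$ and $\Lambda$ so that the local expansions and the global eigenvalues describe the same analytic function; this is handled by noting that $N\cap\{\mathrm{Re}\,\lambda>0\}$ is a nonempty open set on which both extensions are continuous and agree on the positive real axis, forcing them to coincide.
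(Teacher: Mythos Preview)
Your approach is correct and is exactly the natural direct computation one would expect: expand the explicit quadratic formula for the roots of $\mu^2 - f'(\bar u_\pm)\mu - \lambda = 0$ via the binomial series for $\sqrt{(f'(\bar u_\pm))^2+4\lambda}$, split on the sign of $f'(\bar u_\pm)$, and read off eigenvectors from the companion-matrix structure. The paper does not supply a proof of this lemma at all---it is stated and then immediately specialized under the Lax condition---so there is nothing to compare against beyond noting that your argument fills in precisely the elementary verification the paper omits. Your remark about branch matching is a reasonable scruple but is not really an obstacle here: on $\Lambda$ the discriminant $(f'(\bar u_\pm))^2+4\lambda$ never hits $(-\infty,0]$ (if it did, $\lambda$ would be real and $<-(f')^2/4$, contradicting $\lambda\in\Lambda$), and near $\lambda=0$ it is close to the positive real number $(f'(\bar u_\pm))^2$, so the principal square root is globally analytic on $\Omega$ without any patching.
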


Since we have assumed the Lax condition, $f^{\prime}(\bar{u}_+)<0<f^{\prime}(\bar{u}_-)$,
so the results in above lemma simplify to read
\begin{align*}
\mu_1^{+}(\lambda)
=f^{\prime}(\bar{u}_{+})+\mathbf{O}(\lambda),& \quad
\mu_2^{+}(\lambda)
=-\frac{\lambda}{f^{\prime}(\bar{u}_{+})}+\frac{\lambda ^2}{(f^{\prime}(\bar{u}_{+}))^3}+\mathbf{O}(\lambda^3), \\
\mu_1^{-}(\lambda)
=-\frac{\lambda}{f^{\prime}(\bar{u}_{-})}+\frac{\lambda ^2}{(f^{\prime}(\bar{u}_{-}))^3}+\mathbf{O}(\lambda^3),& \quad
\mu_2^{-}(\lambda)
=f^{\prime}(\bar{u}_{-})+\mathbf{O}(\lambda),\\
V_1^{+}(\lambda)
=\begin{pmatrix} 1 \\ f^{\prime}(\bar{u}_{+}) \end{pmatrix}+\mathbf{O}(\lambda),& \quad
V_2^{+}(\lambda)
=\begin{pmatrix} 1+\mathbf{O}(\lambda) \\ -\frac{\lambda}{f^{\prime}(\bar{u}_{+})}+\mathbf{O}(\lambda^2) \end{pmatrix},\\
V_1^{-}(\lambda)
=\begin{pmatrix} 1+\mathbf{O}(\lambda) \\ -\frac{\lambda}{f^{\prime}(\bar{u}_{-})}+\mathbf{O}(\lambda^2) \end{pmatrix},& \quad V_2^{-}(\lambda)
=\begin{pmatrix} 1 \\ f^{\prime}(\bar{u}_{-}) \end{pmatrix}+\mathbf{O}(\lambda),
\end{align*}

\begin{lem} \label{Lem6.2}
For $\lambda \in \Omega \cap \{\lambda: |\lambda|<\delta\}$ and $\delta$ sufficiently small, there exist solutions
$W_j^{\pm}(x;\lambda)$ of \eqref{2.2}, $(j=1,2)$, $C^1$ in $x$ and analytic in $\lambda$, satisfying
\begin{align*}
W_j^{\pm}(x;\lambda)&=V_j^{\pm}(x;\lambda)e^{\mu_j^{\pm}(\lambda)x}, \\
\left(\frac{\partial}{\partial \lambda}\right)^k V_j^{\pm}(x;\lambda)&=\left(\frac{\partial}{\partial \lambda}\right)^k V_j^{\pm}(\lambda)
+\mathbf{O}\left(e^{-\tilde{\alpha}|x|}\left|\left(\frac{\partial}{\partial \lambda}\right)^k V_j^{\pm}(\lambda)\right|\right)
\end{align*}
for any $k\geq 0$ and $0<\tilde{\alpha}<\alpha$, where $\alpha$ is the rate of decay given in $\mathrm{Proposition}$ $\ref{Pro3.4}$,
$\mu_j^{\pm}(\lambda)$ and $V_j^{\pm}(\lambda)$ are as above, and $\mathbf{O}(\cdot)$ depends only on $k, \tilde{\alpha}$.
\end{lem}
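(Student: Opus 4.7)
The plan is to invoke the Gap Lemma (Proposition \ref{Pro4.1}) once for each pair $(j,\pm)$, feeding it the analytic eigenvalue/eigenvector data supplied by the preceding low-frequency lemma and the exponential decay of the coefficient matrix provided by Proposition \ref{Pro3.4}.

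First, I would verify the hypotheses of Proposition \ref{Pro4.1}. The coefficient matrix $\mathbb{A}(x;\lambda)$ of \eqref{2.2} differs from its asymptotic limit $\mathbb{A}_{\pm}(\lambda)$ only in the entries involving $f''(\bar{u})\bar{u}_x$ and $f'(\bar{u})-f'(\bar{u}_{\pm})$; by Proposition \ref{Pro3.4} and the mean value theorem, both decay like $e^{-\alpha|x|}$ as $x\to\pm\infty$, so $|\mathbb{A}(x;\lambda)-\mathbb{A}_{\pm}(\lambda)|=\mathbf{O}(e^{-\alpha|x|})$. Analyticity of $\mathbb{A}$ in $\lambda$ is automatic since $\mathbb{A}$ is affine in $\lambda$, and the required H\"older continuity in $x$ is immediate from smoothness of $\bar{u}$.

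Next, for each sign $\pm$ and each $j\in\{1,2\}$, the previous lemma furnishes an analytic branch of the eigenvalue $\mu_j^{\pm}(\lambda)$ and of the corresponding eigenvector $V_j^{\pm}(\lambda)$ on a neighborhood $N$ of $\lambda=0$ contained in $\Omega$. Feeding each of these four pairs into Proposition \ref{Pro4.1} (and its mirror version for $x\to+\infty$, which follows by the reflection $x\mapsto -x$) produces a solution $W_j^{\pm}(x;\lambda)=V_j^{\pm}(x;\lambda)e^{\mu_j^{\pm}(\lambda)x}$ of \eqref{2.2}, defined on the appropriate half-line, $C^1$ in $x$ and locally analytic in $\lambda$, with
\[
\left(\frac{\partial}{\partial\lambda}\right)^k V_j^{\pm}(x;\lambda)
=\left(\frac{\partial}{\partial\lambda}\right)^k V_j^{\pm}(\lambda)
+\mathbf{O}\!\left(e^{-\tilde{\alpha}|x|}\left|\left(\frac{\partial}{\partial\lambda}\right)^k V_j^{\pm}(\lambda)\right|\right)
\]
for any $k\geq 0$ and $0<\tilde{\alpha}<\alpha$. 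This is precisely the $j=k$ instance of the bound \eqref{4.4} already built into the statement of the Gap Lemma, so no additional induction or estimation is required.

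The only subtlety worth flagging, and the reason one cannot claim $\tilde{\alpha}=\alpha$, is the gap condition $\operatorname{Re}\mu_j^{\pm}(\lambda)>\operatorname{Re}\tilde{\mu}_k^{\pm}(\lambda)-\alpha$ required by Proposition \ref{Pro4.1} for uniqueness and the endpoint rate. From the low-frequency expansions, the gap between the two eigenvalues of $\mathbb{A}_{\pm}$ tends to $|f'(\bar{u}_{\pm})|$ as $\lambda\to 0$, and this may be smaller than $\alpha$; however, the existence statement of the Gap Lemma holds without any such hypothesis, so for the purposes of Lemma \ref{Lem6.2}, which asserts only existence with the loss $\tilde{\alpha}<\alpha$, this poses no obstruction and the claim follows.
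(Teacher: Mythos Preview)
Your proof is correct and follows exactly the paper's approach: the paper's proof consists of the single sentence ``This is a direct consequence of the Gap Lemma (Proposition \ref{Pro4.1}),'' and you have simply fleshed out the verification of its hypotheses using Proposition \ref{Pro3.4} and the preceding low-frequency lemma. Your added remark on why one only obtains $\tilde{\alpha}<\alpha$ rather than $\tilde{\alpha}=\alpha$ is a helpful elaboration not present in the paper.
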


\begin{proof}
This is a direct consequence of the Gap Lemma (Proposition \ref{Pro4.1}).
\end{proof}

Now we can classify our forward and dual modes in a greater detail:

The \textit{fast growing modes of the dual problem} \eqref{5.17} are
\begin{equation}
\tilde{\phi}^+(x) = \tilde{W}_1^+(x;\lambda) =\tilde{V}_1^+(x;\lambda) e^{-\mu_1^+(\lambda) x}, \quad
\tilde{\phi}^-(x) = \tilde{W}_2^-(x;\lambda) =\tilde{V}_2^-(x;\lambda) e^{-\mu_2^-(\lambda) x},
\end{equation}
the \textit{slow decaying modes of the dual problem} \eqref{5.17} are
\begin{equation} \label{6.17}
\tilde{\psi}^+(x) = \tilde{W}_2^+(x;\lambda) =\tilde{V}_2^+(x;\lambda) e^{-\mu_2^+(\lambda) x}, \quad
\tilde{\psi}^-(x) = \tilde{W}_1^-(x;\lambda) =\tilde{V}_1^-(x;\lambda) e^{-\mu_1^-(\lambda) x},
\end{equation}
the \textit{fast decaying modes of the forward problem} \eqref{2.2} are
\begin{equation}
\phi^+(x) = W_1^+(x;\lambda) =V_1^+(x;\lambda) e^{\mu_1^+(\lambda) x}, \quad
\phi^-(x) = W_2^-(x;\lambda) =V_2^-(x;\lambda) e^{\mu_2^-(\lambda) x},
\end{equation}
the \textit{slow growing modes of the forward problem} \eqref{2.2} are
\begin{equation}
\psi^+(x) = W_2^+(x;\lambda) =V_2^+(x;\lambda) e^{\mu_2^+(\lambda) x}, \quad
\psi^-(x) = W_1^-(x;\lambda) =V_1^-(x;\lambda) e^{\mu_1^-(\lambda) x}.
\end{equation}

Specifically, due to the special, conservative structure of the underlying evolution equations, the adjoint eigenvalue equation (dual problem \eqref{5.17}) at $\lambda=0$ can be written as
$\tilde{W}^{\prime}=\tilde{W}\begin{pmatrix} 0 & 0\\ 1 & -f^{\prime}(\bar{u}) \end{pmatrix}$,
so it admits a $1$-dimensional subspace of constant solutions $\tilde{W}\equiv \left( c,0 \right)$,
where $c$ is a constant. Thus, at $\lambda=0$, we may choose, by appropriate change of coordinates if necessary, to have slow decaying dual modes $\tilde{\psi}^{\pm}(x)$(\eqref{6.17}) identically constant. Because when we let $\lambda=0$ in \eqref{6.17}, we have
$\tilde{\psi}^+(x) = \tilde{W}_2^+(x;0) =\tilde{V}_2^+(x;0) e^{-\mu_2^+(0) x}=\tilde{V}_2^+(x;0)$, and
$\tilde{\psi}^-(x) = \tilde{W}_1^-(x;0) =\tilde{V}_1^-(x;0) e^{-\mu_1^-(0) x}=\tilde{V}_1^-(x;0)$,
and we can choose $\tilde{V}_2^+(x;0)\equiv \text{constant}$ and $\tilde{V}_1^-(x;0)\equiv \text{constant}$ according to the above observation.

\begin{lem}
With the above choice of bases at $\lambda=0$, and for $\lambda \in \Omega \cap \{\lambda: |\lambda|<\delta\}$ and $\delta$ sufficiently small, slow decaying dual modes $\tilde{W}_2^+(y;\lambda)$ and $\tilde{W}_1^-(y;\lambda)$ satisfy
 \begin{eqnarray}
 \tilde{W}_j^{\pm}(y;\lambda)=e^{-\mu_j^{\pm}(\lambda)y}\tilde{V}_j^{\pm}(0)+\lambda \tilde{\Theta}_j^{\pm}(y;\lambda),
 \end{eqnarray}
where
 \begin{equation} \label{6.25}
 \left| \tilde{\Theta}_j^{\pm} \right| \leq C\left| e^{-\mu_j^{\pm}(\lambda)y} \right|, \quad
 \left| \left(\frac{\partial}{\partial y}\right)\tilde{\Theta}_j^{\pm} \right|
 \leq C\left| e^{-\mu_j^{\pm}(\lambda)y} \right|\left( |\lambda|+ e^{-\alpha |y|} \right),
 \end{equation}
$\alpha >0$ is the rate of decay given in $\mathrm{Proposition}$ $\ref{Pro3.4}$, as $y \to \pm\infty$, and
$\tilde{V}_j^{\pm}(y;0)=\tilde{V}_j^{\pm}(0)\equiv \text{constant}$.
Similarly, fast decaying forward modes $W_1^+(x;\lambda)$ and $W_2^-(x;\lambda)$ satisfy
 \begin{eqnarray} \label{6.27}
 W_j^{\pm}(x;\lambda) = W_j^{\pm}(x;0) + \lambda \Theta_j^{\pm}(x;\lambda),
 \end{eqnarray}
where
 \begin{equation} \label{6.28}
 \left| \Theta_j^{\pm} \right| \leq C e^{-\alpha |x|}, \quad
 \left| \left(\frac{\partial}{\partial x}\right)\Theta_j^{\pm} \right| \leq C e^{-\alpha |x|},
 \end{equation}
as $x \to \pm\infty$.
\end{lem}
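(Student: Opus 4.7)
My plan is to derive both expansions from Lemma~\ref{Lem6.2}, which already supplies analytic-in-$\lambda$ solutions $W_j^\pm=V_j^\pm(x;\lambda)e^{\mu_j^\pm x}$ and $\tilde W_j^\pm=\tilde V_j^\pm(y;\lambda)e^{-\mu_j^\pm y}$ with $V$ and $\tilde V$ uniformly bounded for $|\lambda|<\delta$; the only extra input needed is the conservative-law structure noted just before the lemma, namely that the basis can be chosen so that $\tilde V_j^\pm(y;0)\equiv\tilde V_j^\pm(0)=(c,0)$ and, crucially, this constant is a \emph{left null vector of $\tilde{\mathbb A}(y;0)$ for every $y$}, not merely at $y=\pm\infty$.

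For the slow decaying dual modes I start from
\[
\lambda\tilde\Theta_j^\pm(y;\lambda)=\tilde W_j^\pm(y;\lambda)-e^{-\mu_j^\pm y}\tilde V_j^\pm(0)=e^{-\mu_j^\pm y}\bigl[\tilde V_j^\pm(y;\lambda)-\tilde V_j^\pm(0)\bigr].
\]
The bracket is analytic in $\lambda$, vanishes at $\lambda=0$ by the basis choice, and is uniformly bounded in $y$ by Lemma~\ref{Lem6.2}; Cauchy's formula on a small disc in $\lambda$ then gives $|\tilde V_j^\pm(y;\lambda)-\tilde V_j^\pm(0)|\le C|\lambda|$, which yields the first bound $|\tilde\Theta_j^\pm|\le C|e^{-\mu_j^\pm y}|$. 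For the $y$-derivative I differentiate and use the dual ODE $\partial_y\tilde W_j^\pm=\tilde W_j^\pm\tilde{\mathbb A}(y;\lambda)$ to obtain
\[
\lambda\,\partial_y\tilde\Theta_j^\pm=e^{-\mu_j^\pm y}F(y;\lambda),\qquad F(y;\lambda):=\tilde V_j^\pm(y;\lambda)\tilde{\mathbb A}(y;\lambda)+\mu_j^\pm\tilde V_j^\pm(0).
\]
The decisive observation is that $F(y;0)=\tilde V_j^\pm(0)\tilde{\mathbb A}(y;0)=0$ \emph{identically in $y$}, so $F/\lambda$ is analytic near $\lambda=0$. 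To show $F/\lambda=O(|\lambda|+e^{-\alpha|y|})$ I split $\tilde{\mathbb A}(y;\lambda)=\tilde{\mathbb A}_\pm(\lambda)+[\tilde{\mathbb A}(y;\lambda)-\tilde{\mathbb A}_\pm(\lambda)]$: using the asymptotic relation $\tilde V_j^\pm(\lambda)\tilde{\mathbb A}_\pm(\lambda)=-\mu_j^\pm\tilde V_j^\pm(\lambda)$, the far-field piece of $F$ rearranges as $[\tilde V_j^\pm(y;\lambda)-\tilde V_j^\pm(\lambda)]\tilde{\mathbb A}_\pm(\lambda)-\mu_j^\pm[\tilde V_j^\pm(\lambda)-\tilde V_j^\pm(0)]$, and Cauchy estimates on each factor (both of which vanish at $\lambda=0$) yield $O(\lambda^2+\lambda e^{-\alpha|y|})$. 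The boundary-layer piece $\tilde V_j^\pm(y;\lambda)[\tilde{\mathbb A}(y;\lambda)-\tilde{\mathbb A}_\pm(\lambda)]$ is pointwise $O(e^{-\alpha|y|})$ by Proposition~\ref{Pro3.4}, and it too vanishes at $\lambda=0$ by the null-vector identity (since the nonzero entry of $\tilde{\mathbb A}-\tilde{\mathbb A}_\pm$ sits in the second column, against which $(c,0)$ pairs trivially), so one further Cauchy estimate gives $O(\lambda e^{-\alpha|y|})$. Combining, $F=O(|\lambda|^2+|\lambda|e^{-\alpha|y|})$ and the stated bound on $\partial_y\tilde\Theta_j^\pm$ follows.

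The fast decaying forward modes are simpler because the mode itself decays exponentially in $x$: since $\mu_1^+(0)=f'(\bar u_+)<0$ and $\mu_2^-(0)=f'(\bar u_-)>0$, shrinking $\alpha$ if necessary one has $|W_j^\pm(x;\lambda)|\le Ce^{-\alpha|x|}$ uniformly for $|\lambda|<\delta$ by Lemma~\ref{Lem6.2}. Analyticity of $W_j^\pm$ in $\lambda$ and Cauchy's integral formula then give $|\Theta_j^\pm|\le Ce^{-\alpha|x|}$, and the companion bound on $\partial_x\Theta_j^\pm$ follows either by differentiating and using $\partial_x W_j^\pm=\mathbb A(x;\lambda)W_j^\pm$, which grants $|\partial_x W_j^\pm|\le Ce^{-\alpha|x|}$ as well, or directly by Cauchy applied to $\partial_x W_j^\pm$. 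The main obstacle in the whole argument is therefore the $y$-derivative estimate on $\tilde\Theta_j^\pm$: a naive use of Lemma~\ref{Lem6.2} produces a residual term of size $|\lambda|^{-1}e^{-\alpha|y|}$, and only the uniform-in-$y$ null-vector identity $(c,0)\tilde{\mathbb A}(y;0)=0$ — a direct consequence of the conservative form of \eqref{0.1} — removes this obstruction.
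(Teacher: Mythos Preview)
Your argument is correct and reaches the same conclusions as the paper, but the route you take for the slow dual modes is genuinely different from the paper's. The paper introduces \emph{augmented} variables
\[
\tilde{\mathbb W}_j^{\pm}=(\tilde W_j^{\pm},\partial_y\tilde W_j^{\pm})=e^{-\mu_j^{\pm}y}(\tilde V_j^{\pm},-\mu_j^{\pm}\tilde V_j^{\pm}+\partial_y\tilde V_j^{\pm}),
\]
applies the Gap Lemma to the resulting $4\times4$ first-order system so as to control $\partial_\lambda(\partial_y\tilde V)$ directly, and then Taylor-expands both coordinates of $\tilde{\mathbb V}_j^{\pm}(y;\lambda)$ in $\lambda$; subtracting the Taylor expansion of the leading term produces the $|\lambda|+e^{-\alpha|y|}$ gain. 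You instead stay with the $2\times2$ dual ODE $\partial_y\tilde W=\tilde W\,\tilde{\mathbb A}(y;\lambda)$, write $\lambda\,\partial_y\tilde\Theta=e^{-\mu y}F(y;\lambda)$ with $F=\tilde V\tilde{\mathbb A}+\mu\tilde V(0)$, and exploit the explicit null-vector identity $(c,0)\,\tilde{\mathbb A}(y;0)=0$ (valid for \emph{every} $y$, not just at infinity) to kill $F(y;0)$; a far-field/boundary-layer splitting of $\tilde{\mathbb A}$ and Cauchy estimates then finish. Your approach is somewhat more elementary in that it avoids the augmented-system machinery and makes the role of the conservative structure fully explicit, at the cost of a slightly longer splitting argument; the paper's augmented-variable device, on the other hand, is the standard one in the systems literature and scales more readily to the $n\times n$ case where the relevant null space may not be so easy to identify by hand. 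For the fast forward modes the two arguments are essentially equivalent: the paper differentiates $W_j^{\pm}=V_j^{\pm}e^{\mu_j^{\pm}x}$ in $\lambda$ and absorbs the resulting $x$-factor via $|x|e^{-(\alpha+\varepsilon)|x|}\le Ce^{-\alpha|x|}$, while you reach the same bound through Cauchy's integral formula.
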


\begin{proof}
First, let us consider the augmented variables
\begin{equation*}
\begin{aligned}
\tilde{\mathbb{W}}_j^{\pm}(y;\lambda):
= \left(\tilde{W}_j^{\pm}, \tilde{W}_j^{\pm\prime} \right)(y;\lambda)
= e^{-\mu_j^{\pm}(\lambda)y}\tilde{\mathbb{V}}_j^{\pm}(y;\lambda)
= e^{-\mu_j^{\pm}(\lambda)y}\left( \tilde{V}_j^{\pm}, -\mu_j^{\pm}\tilde{V}_j^{\pm}+\tilde{V}_j^{\pm\prime} \right)(y;\lambda)
\end{aligned}
\end{equation*}
and
\begin{equation*}
\begin{aligned}
\mathbb{W}_j^{\pm}(x;\lambda):
= \begin{pmatrix}W_j^{\pm} \\ W_j^{\pm\prime} \end{pmatrix}(x;\lambda)
= e^{\mu_j^{\pm}(\lambda)x}\mathbb{V}_j^{\pm}(x;\lambda)
= e^{\mu_j^{\pm}(\lambda)x}\begin{pmatrix} V_j^{\pm} \\ \mu_j^{\pm}V_j^{\pm}+V_j^{\pm\prime}\end{pmatrix}(x;\lambda).
\end{aligned}
\end{equation*}
Note that since $W_j^{\pm}(x;\lambda)$ satisfies $W^{\prime}=\mathbb{A}(x;\lambda)W$, so
$W^{\prime\prime}=\mathbb{A}^{\prime}W+\mathbb{A}W^{\prime}$ and so
$\mathbb{W}(x;\lambda)=\begin{pmatrix} W \\ W^{\prime} \end{pmatrix}(x;\lambda)$
satisfies
$\mathbb{W}^{\prime}=\begin{pmatrix}\mathbb{A} & 0 \\ \mathbb{A}^{\prime} & \mathbb{A} \end{pmatrix}(x;\lambda) \mathbb{W}$.
Let $x \to \pm\infty$ in the coefficient matrix, we get the limiting equation
$\mathbb{W}^{\prime}=\begin{pmatrix}\mathbb{A}_{\pm} & 0 \\ 0 & \mathbb{A}_{\pm} \end{pmatrix}(\lambda) \mathbb{W}$.
If $\bar{W}_j^{\pm}(x;\lambda)=V_j^{\pm}(\lambda)e^{\mu_j^{\pm}(\lambda)x}$ is a solution of $W^{\prime}=\mathbb{A}_{\pm}W$,
then
$
 \bar{\mathbb{W}}_j^{\pm}(x;\lambda)
 =\begin{pmatrix} \bar{W}_j^{\pm}(x;\lambda) \\ \bar{W}_j^{\pm\prime}(x;\lambda) \end{pmatrix}
 =e^{\mu_j^{\pm}(\lambda)x}
  \begin{pmatrix} V_j^{\pm}(\lambda) \\ \mu_j^{\pm}(\lambda)V_j^{\pm}(\lambda)  \end{pmatrix}
$
is a solution of
$\mathbb{W}^{\prime}=\begin{pmatrix}\mathbb{A}_{\pm} & 0 \\ 0 & \mathbb{A}_{\pm} \end{pmatrix}(\lambda) \mathbb{W}$.

Now we apply the Gap Lemma to obtain bounds
\begin{equation}
\tilde{\mathbb{W}}_j^{\pm}(y;\lambda)
=\tilde{\mathbb{V}}_j^{\pm}(y;\lambda)e^{-\mu_j^{\pm}(\lambda)y},
\end{equation}
\begin{equation} \label{6.31}
\left(\frac{\partial}{\partial \lambda}\right)^{k}\tilde{\mathbb{V}}_j^{\pm}(y;\lambda)
=\left(\frac{\partial}{\partial \lambda}\right)^{k} \tilde{\mathbb{V}}_j^{\pm}(\lambda)
   +\mathbf{O} \left( e^{-\tilde{\alpha}|y|}\left|\tilde{\mathbb{V}}_j^{\pm}(\lambda) \right| \right), y \gtrless 0,
\end{equation}
and
\begin{equation}
\mathbb{W}_j^{\pm}(x;\lambda)
=\mathbb{V}_j^{\pm}(x;\lambda)e^{\mu_j^{\pm}(\lambda)x},
\end{equation}
\begin{equation}
\left(\frac{\partial}{\partial \lambda}\right)^{k}\mathbb{V}_j^{\pm}(x;\lambda)
=\left(\frac{\partial}{\partial \lambda}\right)^{k} \mathbb{V}_j^{\pm}(\lambda)
 +\mathbf{O} \left( e^{-\tilde{\alpha}|x|}\left|\mathbb{V}_j^{\pm}(\lambda) \right| \right), x \gtrless 0,
\end{equation}
$0<\tilde{\alpha}<\alpha$, analogous to Lemma \ref{Lem6.2}, valid for $\lambda \in \Omega \cap \{\lambda: |\lambda|<\delta\}$, where
$\tilde{\mathbb{V}}_j^{\pm}(\lambda)=\left(\tilde{V}_j^{\pm}(\lambda), -\mu_j^{\pm}(\lambda)\tilde{V}_j^{\pm}(\lambda) \right)$
and
$\mathbb{V}_j^{\pm}(\lambda)=\begin{pmatrix} V_j^{\pm}(\lambda) \\ \mu_j^{\pm}(\lambda)V_j^{\pm}(\lambda) \end{pmatrix}$.

By Taylor's Theorem with differential remainder, applied to $\tilde{\mathbb{V}}_j^{\pm}(y;\lambda)$ with respect to $\lambda$, we have:
\begin{equation} \label{6.34}
\tilde{\mathbb{W}}_j^{\pm}(y;\lambda)=e^{-\mu_j^{\pm}(\lambda)y}
\left( \tilde{\mathbb{V}}_j^{\pm}(y;0)+\lambda\left(\frac{\partial}{\partial\lambda}\right)\tilde{\mathbb{V}}_j^{\pm}(y;0)
+\frac{1}{2} \lambda^2 \left(\frac{\partial}{\partial\lambda}\right)^2 \tilde{\mathbb{V}}_j^{\pm}(y;\lambda_{\ast}) \right)
\end{equation}
for some $\lambda_{\ast}$ on the ray from $0$ to $\lambda$, where, recall,
$\left(\frac{\partial}{\partial\lambda}\right)\tilde{\mathbb{V}}_j^{\pm}(y;\cdot)$ and
$\left(\frac{\partial}{\partial\lambda}\right)^2 \tilde{\mathbb{V}}_j^{\pm}(y;\cdot)$ are uniformly bounded in $L^{\infty}[0,\pm\infty]$ for $\lambda \in \Omega \cap \{\lambda: |\lambda|<\delta\}$.
Together with the choice $\tilde{V}_j^{\pm}(y;0)\equiv \text{constant}$, this immediately gives the first bound in \eqref{6.25}.

Applying now the bound \eqref{6.31} with $k=1$, we may expand the second coordinate of \eqref{6.34} as
\begin{eqnarray*}
\left(\frac{\partial}{\partial y}\right)\tilde{W}_j^{\pm}(y;\lambda)
&=&e^{-\mu_j^{\pm}(\lambda)y}\left(-\mu_j^{\pm}(0)\tilde{V}_j^{\pm}(y;0)+ \tilde{V}_j^{\pm\prime}(y;0)\right.\\
& &\left. -\lambda \left(\left(\frac{\partial}{\partial\lambda}\right)(\mu_j^{\pm}\tilde{V}_j^{\pm})(0)+\mathbf{O}(e^{-\alpha|y|}) \right) +\mathbf{O}(\lambda^2)\right)\\
&=&e^{-\mu_j^{\pm}(\lambda)y} \left( -\lambda \left(\left(\frac{\partial}{\partial\lambda}\right)\mu_j^{\pm}(0)\tilde{V}_j^{\pm}(0)+\mathbf{O}(e^{-\alpha|y|}) \right) +\mathbf{O}(\lambda^2) \right),
\end{eqnarray*}
and subtracting off the corresponding Taylor expansion
\begin{align*}
\left(\frac{\partial}{\partial y}\right)\left(e^{-\mu_j^{\pm}(\lambda)y}\tilde{V}_j^{\pm}(y;0)\right)
&=-\mu_j^{\pm}(\lambda)e^{-\mu_j^{\pm}(\lambda)y}\tilde{V}_j^{\pm}(y;0)\\
&=e^{-\mu_j^{\pm}(\lambda)y}\left( -\mu_j^{\pm}(0)\tilde{V}_j^{\pm}(y;0)-\lambda\left(\frac{\partial}{\partial\lambda}\right)
  \mu_j^{\pm}(0)\tilde{V}_j^{\pm}(y;0)+\mathbf{O}(\lambda^2)  \right)\\
&=e^{-\mu_j^{\pm}(\lambda)y}\left( -\lambda\left(\frac{\partial}{\partial\lambda}\right)
  \mu_j^{\pm}(0)\tilde{V}_j^{\pm}(0)+\mathbf{O}(\lambda^2) \right)
\end{align*}
we obtain
\begin{eqnarray*}
\lambda\left(\frac{\partial}{\partial y}\right)\tilde{\Theta}_j^{\pm}(y;\lambda)
=e^{-\mu_j^{\pm}(\lambda)y}\left( \lambda\mathbf{O}(e^{-\alpha|y|})+\mathbf{O}(\lambda^2) \right)
\end{eqnarray*}
so
\begin{eqnarray*}
\left(\frac{\partial}{\partial y}\right)\tilde{\Theta}_j^{\pm}(y;\lambda)
=e^{-\mu_j^{\pm}(\lambda)y}\left( \mathbf{O}(e^{-\alpha|y|})+\mathbf{O}(\lambda) \right)
\leq C\left| e^{-\mu_j^{\pm}(\lambda)y} \right|\left( |\lambda|+ e^{-\alpha |y|} \right),
\end{eqnarray*}
as claimed.

To obtain the estimates \eqref{6.28} we use the Taylor's theorem on
$W_j^{\pm}(x;\lambda)=W_j^{\pm}(x;0)+\lambda \frac{\partial}{\partial\lambda}W_j^{\pm}(x;\lambda_{\ast})$,
where $\lambda_{\ast}$ is some number on the ray from $0$ to $\lambda$. The
$\frac{\partial}{\partial\lambda}W_j^{\pm}(x;\lambda_{\ast})$ term can be written as
$
\frac{\partial}{\partial\lambda}W_j^{\pm}(x;\lambda_{\ast})
=\left(\frac{d\mu_j^{\pm}(\lambda_{\ast})}{d\lambda}\right)xe^{\mu_j^{\pm}(\lambda_{\ast})x}V_j^{\pm}(x;\lambda_{\ast})
+e^{\mu_j^{\pm}(\lambda_{\ast})x}\left[\frac{\partial}{\partial \lambda}V_j^{\pm}(x;\lambda_{\ast})\right]
$
together with the observation that $|xe^{-(\alpha+\varepsilon) |x|}|\leq Ce^{-\alpha |x|}$ for some $\varepsilon>0$ small,
we can derive \eqref{6.28}.
\end{proof}

We now turn to the estimation of scattering coefficients $m^{\pm}$, $d^{\pm}$ in Corollary \ref{Cor5.5}.

\begin{lem}
For $|\lambda|$ sufficiently small, $|m^{\pm}|, |d^{\pm}| \leq C\lambda^{-1}$.
Moreover, $\mathrm{Res}_{\lambda=0}m^{\pm}=\mathrm{Res}_{\lambda=0}d^{\pm}$.
\end{lem}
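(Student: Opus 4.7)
The plan is to exploit translation invariance to show that $D(\lambda):=\det(\phi^+,\phi^-)|_{x=0}$ has a simple zero at $\lambda=0$, then read off the pole bound from the formulas (\ref{5.40}), and finally extract the residue equality from the continuity of $G_\lambda(x,y)$ in $x$ across $x=0$ (respectively in $y$ across $y=0$).

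First I would observe that $\bar{u}'(x)$ solves $Lw=0$ with exponential decay at both $\pm\infty$, so both $\phi^+(\,\cdot\,;0)$ and $\phi^-(\,\cdot\,;0)$ must be scalar multiples of $\bar{u}'$. Using the freedom to rescale the Gap Lemma bases by a $\lambda$-analytic factor, I normalize so that $\phi^\pm(x;0)=\bar{u}'(x)$. Then the columns of $(\phi^+,\phi^-)(0;\lambda)$ coincide at $\lambda=0$, forcing $D(0)=0$. Using the expansion $\phi^\pm(x;\lambda)=\bar{u}'(x)+\lambda\,\Theta^\pm(x;\lambda)$ from \eqref{6.27}--\eqref{6.28}, I compute $D(\lambda)=c_0\lambda+O(\lambda^2)$ with
$c_0=\det\bigl(\bar{u}'(0),\bar{u}''(0);\Theta^-_1(0;0),\Theta^-_2(0;0)\bigr)-\det\bigl(\Theta^+_1(0;0),\Theta^+_2(0;0);\bar{u}'(0),\bar{u}''(0)\bigr)$, and verify $c_0\neq 0$ from the leading $\lambda$-derivatives of the asymptotic modes $V_j^\pm(\lambda)$ combined with the asymptotic decay of $\bar{u}'$ at $\pm\infty$. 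With the simple zero of $D$ in hand, the bound $|m^+|,|d^+|\leq C/|\lambda|$ is immediate from (\ref{5.40}): $(\phi^+,\phi^-)^{-1}=\operatorname{adj}(\phi^+,\phi^-)/D$, and both the adjugate and $\psi^-(0;\lambda)$ remain bounded as $\lambda\to 0$ by Lemma \ref{Lem6.2}. The analogous statement for $m^-,d^-$ uses the dual matrix $\bigl(\begin{smallmatrix}\tilde\psi^-\\ \tilde\psi^+\end{smallmatrix}\bigr)(0;\lambda)$: since the preceding normalization yields $\tilde\psi^\pm(x;0)\equiv(\text{const},0)$, the second column of this matrix vanishes identically at $\lambda=0$, so its determinant has a simple zero in $\lambda$ and the same reasoning applies.

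For the residue equality I would use that $G_\lambda(x,y)$ from Proposition \ref{Pro5.4} is continuous across $x=0$ when $y<0$ (the only jumps occur at $x=y$). Matching the $(1,1)$ entries of the two formulas in Corollary \ref{Cor5.5} at $x=0^+$ and $x=0^-$ yields
\begin{equation*}
m^+(\lambda)\,\phi^+_1(0;\lambda)=d^+(\lambda)\,\phi^-_1(0;\lambda)+\psi^-_1(0;\lambda).
\end{equation*}
Since $\psi^-_1(0;\lambda)$ is analytic at $\lambda=0$ and $\phi^\pm_1(0;0)=\bar{u}'(0)$, the principal parts of the two sides must agree, yielding $\bar{u}'(0)\bigl(\mathrm{Res}_{\lambda=0}m^+-\mathrm{Res}_{\lambda=0}d^+\bigr)=0$, and hence $\mathrm{Res}_{\lambda=0}m^+=\mathrm{Res}_{\lambda=0}d^+$. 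A symmetric argument using continuity of $G_\lambda$ in $y$ across $y=0$ (and the corresponding two cases of Corollary \ref{Cor5.5}) gives $\mathrm{Res}_{\lambda=0}m^-=\mathrm{Res}_{\lambda=0}d^-$.

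The main obstacle will be the transversality check $D'(0)\neq 0$: this amounts to showing that the $\lambda$-derivatives of the decaying modes at $\lambda=0$ are linearly independent from $\bar{u}'$ modulo each other, which requires tracking the $O(\lambda)$ corrections supplied by the Gap Lemma and is the one computational step that cannot be shortcut. The pole bound itself and the residue equality then drop out cleanly from the determinant analysis and continuity of $G_\lambda$, respectively; the rest of the work is ensuring the basis normalizations are consistent across the forward and dual problems.
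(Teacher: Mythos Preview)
Your proposal is correct and, for the pole bound, follows the same skeleton as the paper: both write $m^+$ via Cramer's rule as a ratio with the Evans function $D(\lambda)=\det(\phi^+,\phi^-)|_{x=0}$ in the denominator, and both need the numerator bounded and $D$ vanishing to exactly first order at $\lambda=0$. The difference is one of style. The paper computes explicitly, plugging the low-frequency expansions of $V_j^\pm(\lambda)$ and $\mu_j^\pm(\lambda)$ from Lemma~\ref{Lem6.2} into $c^+=\det(\psi^-,\phi^-)$ and $D$ to read off $c^+=f'(\bar u_-)+\mathbf{O}(1)$ and $D=\mathbf{O}(\lambda)$; it treats only $m^+$ in detail and does not spell out the residue statement. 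Your argument is more structural: you identify the zero of $D$ with the translation eigenfunction $\bar u'$ and isolate the transversality $D'(0)\neq 0$ as the one nontrivial computation. Both routes ultimately hinge on that same transversality; your flagging it as the ``main obstacle'' is accurate, and the paper's explicit asymptotics are precisely one way to verify it.

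Your treatment of the residue equality is a genuine addition: matching the two representations of Corollary~\ref{Cor5.5} at $x=0$ to get $m^+\phi^+(0;\lambda)=d^+\phi^-(0;\lambda)+\psi^-(0;\lambda)$, then comparing principal parts using $\phi^\pm(0;0)=(\bar u'(0),\bar u''(0))^T$ and $\bar u'(0)\neq 0$, is clean and not present in the paper's proof. One small caution on the dual side: your claim that the second column of $\bigl(\begin{smallmatrix}\tilde\psi^-\\ \tilde\psi^+\end{smallmatrix}\bigr)(0;\lambda)$ vanishes at $\lambda=0$ is correct given the normalization $\tilde\psi^\pm(\,\cdot\,;0)\equiv(\text{const},0)$, but showing that determinant has a \emph{simple} zero again reduces to the same transversality computation as for $D$, so you have not actually bypassed the obstacle there---you should note that the two determinants are linked through $M=\Phi^{-1}\mathcal{S}^{-1}\tilde\Psi^{-1}$ in Proposition~\ref{Pro5.4}, which lets you transfer the simple-zero information from one to the other rather than redoing it.
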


\begin{proof}
Expanding $m^+=(1,0)(\phi^+,\phi^-)^{-1}\psi^-$ using Cramer's rule, and setting $x=y=0$ in $\phi^{\pm}$ and $\psi^-$, we obtain
 \begin{eqnarray*}
 m^+
 &=&(1,0)(\phi^+,\phi^-)^{-1}\psi^-
 =\frac{1}{D}(1,0)(\phi^+,\phi^-)^{\mathrm{adj}}\psi^-=\frac{1}{D}c^+\\
 &=&\frac{\det(\psi^-,\phi^-)}{\det(\phi^+,\phi^-)}=\frac{\det(\psi^-,\phi^-)}{D}
 \end{eqnarray*}
where $D=\det(\phi^+,\phi^-)$, $c^+=(1,0)(\phi^+,\phi^-)^{\mathrm{adj}}\psi^-$,
so
 \begin{eqnarray*}
 c^+
 &=&Dm^+=\det(\psi^-,\phi^-)\\
 &=&\det\left( V_1^{-}(x;\lambda)e^{\mu_1^{-}(\lambda)x},V_2^{-}(y;\lambda)e^{\mu_2^{-}(\lambda)y} \right)\\
 &=&\det\left( \left[V_1^{-}(\lambda)+\mathbf{O}(e^{-\tilde{\alpha}|x|})\right]e^{\left(-\frac{\lambda}{f^{\prime}(\bar{u}_{-})}+\frac{\lambda ^2}{(f^{\prime}(\bar{u}_{-}))^3}+\mathbf{O}(\lambda^3)\right)x},\right.\\
 & & \left.V_2^{-}(y,0)e^{f^{\prime}(\bar{u}_{-})y}
     +\lambda\mathbf{O}(e^{-\alpha|y|}) \right)\\
 &=&\det(V_1^{-}(\lambda)+\mathbf{O}(1),V_2^{-}(0)+\lambda\mathbf{O}(1))\\
 &=&\det\begin{pmatrix} 1+\mathbf{O}(\lambda)+\mathbf{O}(1) & 1+\mathbf{O}(\lambda) \\ -\frac{\lambda}{f^{\prime}(\bar{u}_{-})}+\mathbf{O}(\lambda^2)+\mathbf{O}(1) & f^{\prime}(\bar{u}_{-})+\mathbf{O}(\lambda) \end{pmatrix}\\
 &=&f^{\prime}(\bar{u}_{-})+\frac{\lambda}{f^{\prime}(\bar{u}_{-})}+\mathbf{O}(1)+\mathbf{O}(\lambda)+\mathbf{O}(\lambda^2)
    +\mathbf{O}(\lambda^3)\\
 &=&f^{\prime}(\bar{u}_{-})+\mathbf{O}(1)
 \end{eqnarray*}
and
$D=\det(\phi^+,\phi^-)=\det(V_1^{+}(0;\lambda),V_2^{-}(0;\lambda))=\mathbf{O}(\lambda)$
together we have
$m^+=\frac{c^+}{D}=\frac{f^{\prime}(\bar{u}_{-})+\mathbf{O}(1)}{\mathbf{O}(\lambda)}$
or equivalently $|m^+|\leq C/\lambda$.
\end{proof}

\begin{pro} \label{Pro6.5}
For $\lambda \in \Omega \cap \{\lambda: |\lambda|<\delta\}$ and $\delta$ sufficiently small, the resolvent kernel $G_\lambda$ has a meromorphic extension onto $\{\lambda: |\lambda|<\delta\}$, which may be decomposed as
 \begin{eqnarray}
 G_\lambda=E_\lambda+S_\lambda +R_\lambda=E_\lambda+S_\lambda +R_\lambda^E+R_\lambda^S
 \end{eqnarray}

For $y \leq 0 \leq x$:
 \begin{equation} \label{6.38}
 \begin{pmatrix}
 E_\lambda & E_{\lambda,y}\\
 E_{\lambda,x} & E_{\lambda,xy}
 \end{pmatrix}:=
 C_1\lambda^{-1} W_1^{+}(x;0)\tilde{V}_1^{-}(0)e^{\left(\frac{\lambda}{f^{\prime}(\bar{u}_{-})}-\frac{\lambda ^2}{(f^{\prime}(\bar{u}_{-}))^3}\right)y}
 \end{equation}
 \begin{eqnarray}
 S_\lambda=0
 \end{eqnarray}
and
 \begin{equation}
 \begin{pmatrix}
 R_\lambda & R_{\lambda,y}\\
 R_{\lambda,x} & R_{\lambda,xy}
 \end{pmatrix}:=
 e^{-\alpha|x|}e^{\left(\frac{\lambda}{f^{\prime}(\bar{u}_{-})}-\frac{\lambda ^2}{(f^{\prime}(\bar{u}_{-}))^3}
 \right)y}\left(\mathbf{O}(\lambda^{-1})\left(e^{\mathbf{O}(\lambda^3)y}-1\right)+\mathbf{O}(1)\right)
 \end{equation}

For $y \leq x \leq 0$:
 \begin{equation} \label{6.41}
 \begin{pmatrix}
 E_\lambda & E_{\lambda,y}\\
 E_{\lambda,x} & E_{\lambda,xy}
 \end{pmatrix}:=
 C_2\lambda^{-1} W_2^{-}(x;0)\tilde{V}_1^{-}(0)e^{\left(\frac{\lambda}{f^{\prime}(\bar{u}_{-})}-\frac{\lambda ^2}{(f^{\prime}(\bar{u}_{-}))^3}\right)y}
 \end{equation}
 \begin{equation} \label{6.42}
 \begin{pmatrix}
 S_\lambda & S_{\lambda,y}\\
 S_{\lambda,x} & S_{\lambda,xy}
 \end{pmatrix}:=
 V_1^{-}(0)\tilde{V}_1^{-}(0)e^{\left(-\frac{\lambda}{f^{\prime}(\bar{u}_-)}+\frac{\lambda^2}{(f^{\prime}(\bar{u}_-))^3}\right)(x-y)}
 \end{equation}
 \begin{equation} \label{6.43}
 \begin{pmatrix}
 R_\lambda^E & R_{\lambda,y}^E\\
 R_{\lambda,x}^E & R_{\lambda,xy}^E
 \end{pmatrix}:=
 e^{-\alpha|x|}e^{\left(\frac{\lambda}{f^{\prime}(\bar{u}_{-})}-\frac{\lambda ^2}{(f^{\prime}(\bar{u}_{-}))^3}
    \right)y}\left(\mathbf{O}(\lambda^{-1})\left(e^{\mathbf{O}(\lambda^3)y}-1\right)+\mathbf{O}(1)\right)
 \end{equation}
 \begin{equation} \label{6.44}
 \begin{pmatrix}
 R_\lambda^S & R_{\lambda,y}^S\\
 R_{\lambda,x}^S & R_{\lambda,xy}^S
 \end{pmatrix}:=
 e^{\left(-\frac{\lambda}{f^{\prime}(\bar{u}_-)}+\frac{\lambda^2}{(f^{\prime}(\bar{u}_-))^3}\right)(x-y)}
    \left[\mathbf{O}\left(e^{\mathbf{O}(\lambda^3)(x-y)}-1\right)+\mathbf{O}(\lambda)+\mathbf{O}\left(e^{-\tilde{\alpha}|x|}\right)\right]
 \end{equation}
In fact, the derivatives of $R_{\lambda}$ can have better bounds.

For $y \leq 0 \leq x$,
 \begin{eqnarray} \label{6.45}
 R_{\lambda,y}=e^{-\alpha|x|}e^{\left(\frac{\lambda}{f^{\prime}(\bar{u}_{-})}-\frac{\lambda ^2}{(f^{\prime}(\bar{u}_{-}))^3}
 \right)y}\left(\mathbf{O}\left(e^{\mathbf{O}(\lambda^3)y}-1\right)+\mathbf{O}(\lambda)\right)
 \end{eqnarray}

For $y \leq x \leq 0$,
 \begin{eqnarray} \label{6.46}
 R_{\lambda,y}^E=e^{-\alpha|x|}e^{\left(\frac{\lambda}{f^{\prime}(\bar{u}_{-})}-\frac{\lambda ^2}{(f^{\prime}(\bar{u}_{-}))^3}
    \right)y}\left(\mathbf{O}\left(e^{\mathbf{O}(\lambda^3)y}-1\right)+\mathbf{O}(\lambda)\right)
 \end{eqnarray}
 \begin{equation} \label{6.47}
 R_{\lambda,y}^S=e^{\left(-\frac{\lambda}{f^{\prime}(\bar{u}_-)}+\frac{\lambda^2}{(f^{\prime}(\bar{u}_-))^3}\right)(x-y)}
    \left[\lambda\mathbf{O}\left(e^{\mathbf{O}(\lambda^3)(x-y)}-1\right)
    +\mathbf{O}(\lambda)+\lambda\mathbf{O}\left(e^{-\tilde{\alpha}|x|}\right)\right]
 \end{equation}

For the case when $x\leq y$, there hold some similar estimates.
\end{pro}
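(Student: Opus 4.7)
The plan is to substitute the low-frequency mode expansions developed in the preceding lemmas directly into the scattering decomposition of Corollary \ref{Cor5.5} and separate the resulting products into the polar contribution $E_\lambda$, the convected-scattering contribution $S_\lambda$, and higher-order remainders. The three ingredients I will use repeatedly are: (i) the Laurent expansion $m^{\pm}(\lambda)=C_\pm\lambda^{-1}+(\text{analytic})$ coming from Lemma 6.4 together with the residue identity $\mathrm{Res}_{\lambda=0}m^{\pm}=\mathrm{Res}_{\lambda=0}d^{\pm}$; (ii) the refined mode expansions $W_j^{\pm}(x;\lambda)=W_j^{\pm}(x;0)+\lambda\Theta_j^{\pm}(x;\lambda)$ and $\tilde W_j^{\pm}(y;\lambda)=e^{-\mu_j^{\pm}(\lambda)y}\tilde V_j^{\pm}(0)+\lambda\tilde\Theta_j^{\pm}(y;\lambda)$ with the pointwise bounds \eqref{6.25}, \eqref{6.28}; and (iii) the quadratic Taylor expansion of $\mu_1^-(\lambda)$ about $\lambda=0$ from the first lemma of this section, whose cubic remainder will be isolated through the algebraic identity $e^{\mu_1^-(\lambda)y}=e^{(-\lambda/f'(\bar u_-)+\lambda^2/(f'(\bar u_-))^3)y}\bigl(1+(e^{\mathbf{O}(\lambda^3)y}-1)\bigr)$.

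For the case $y\leq 0\leq x$, I insert these expansions into $m^+(\lambda)\phi^+(x;\lambda)\tilde\psi^-(y;\lambda)$. The product of the polar part $C_1\lambda^{-1}$ of $m^+$ with the $\lambda=0$ parts $W_1^+(x;0)$ and $\tilde V_1^-(0)e^{-\mu_1^-(\lambda)y}$ is declared to be $E_\lambda$ as in \eqref{6.38}; everything else gets assigned to $R_\lambda$. The bound on $R_\lambda$ is read off by distributing the products: the $\lambda\Theta_1^+$ and $\lambda\tilde\Theta_1^-$ factors produce $\mathbf{O}(1)$ terms against $m^+=\mathbf{O}(\lambda^{-1})$, while the cubic piece of $\mu_1^-(\lambda)y$ in the exponential becomes the $\mathbf{O}(\lambda^{-1})(e^{\mathbf{O}(\lambda^3)y}-1)$ contribution; the prefactor $e^{-\alpha|x|}$ is exactly the decay of $W_1^+(x;0)=\phi^+(x;0)$ supplied by Proposition \ref{Pro3.4}.

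For the case $y\leq x\leq 0$, I start from the splitting $\bigl(G_\lambda\bigr) =d^+\phi^-(x)\tilde\psi^-(y)+\psi^-(x)\tilde\psi^-(y)$ in \eqref{5.38}. The first summand is analyzed exactly as in the previous paragraph, yielding \eqref{6.41} and producing $R_\lambda^E$ in \eqref{6.43}; the residue identity of Lemma 6.4 guarantees that the $E$-piece on $y\leq x\leq 0$ continuously matches the $E$-piece on $y\leq 0\leq x$ across $x=0$, giving the decomposition a coherent meromorphic meaning. The second summand $\psi^-\tilde\psi^-=V_1^-(x;\lambda)\tilde V_1^-(y;\lambda)e^{\mu_1^-(\lambda)(x-y)}$ is the source of $S_\lambda$: combining $V_1^-(x;\lambda)=V_1^-(0)+\mathbf{O}(\lambda+e^{-\tilde\alpha|x|})$ with $\tilde V_1^-(y;\lambda)=\tilde V_1^-(0)+\mathbf{O}(\lambda)$ and the expansion of $\mu_1^-(\lambda)(x-y)$ through quadratic order gives the constant-coefficient exponential factor of \eqref{6.42}, and collecting the resulting error terms produces $R_\lambda^S$ in \eqref{6.44}, with the cubic exponential remainder again captured by $e^{\mathbf{O}(\lambda^3)(x-y)}-1$.

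The improved $y$-derivative bounds \eqref{6.45}--\eqref{6.47} follow by redoing the same bookkeeping after differentiating in $y$: applying $\partial_y$ to the exponential $e^{\mu_1^-(\lambda)(x-y)}$ or $e^{-\mu_1^-(\lambda)y}$ pulls out a factor $\mu_1^-(\lambda)=\mathbf{O}(\lambda)$, and the derivative $\partial_y\tilde\Theta_j^\pm$ is controlled by the second inequality of \eqref{6.25}, so the $\mathbf{O}(1)$ terms in $R_\lambda$ are upgraded to $\mathbf{O}(\lambda)$ in $R_{\lambda,y}$. The main obstacle I anticipate is purely bookkeeping: organizing the many products of Taylor tails into the minimal two-part remainder structure $\mathbf{O}(\lambda^{-1})(e^{\mathbf{O}(\lambda^3)y}-1)+\mathbf{O}(1)$ (and its analogues) so that the cubic-in-$\lambda$ corrections in the exponent are cleanly separated from genuine $\mathbf{O}(1)$ analytic remainders, since only under that separation do the subsequent inverse Laplace/stationary-phase estimates leading to Theorem \ref{greenfunctionbounds} go through.
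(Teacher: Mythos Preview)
Your proposal is correct and follows essentially the same approach as the paper: you substitute the low-frequency expansions of the forward and dual modes (Lemma~6.3) and of the scattering coefficients $m^+,d^+$ (Lemma~6.4) into the scattering decomposition of Corollary~\ref{Cor5.5}, then separate the polar-in-$\lambda$ piece (giving $E_\lambda$), the constant-coefficient convected exponential from $\psi^-\tilde\psi^-$ (giving $S_\lambda$), and the Taylor/cubic-exponent remainders (giving $R_\lambda^E,R_\lambda^S$), with the improved $y$-derivative bounds coming from the second estimate in \eqref{6.25}. Your remark that the residue identity $\mathrm{Res}_{\lambda=0}m^+=\mathrm{Res}_{\lambda=0}d^+$ together with $W_1^+(x;0)=W_2^-(x;0)$ effects the matching of the $E$-pieces across $x=0$ is a clean observation that the paper only records implicitly via a subsequent remark.
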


\begin{proof}
For $y \leq 0 \leq x$,
 \begin{eqnarray*}
 & &
  \begin{pmatrix}
  G_\lambda & G_{\lambda,y}\\
  G_{\lambda,x} & G_{\lambda,xy}
  \end{pmatrix}\\
 &=&m^+(\lambda)\phi^+(x;\lambda)\tilde{\psi}^-(y;\lambda)
 =m^+(\lambda)W_1^{+}(x;\lambda)\tilde{W}_1^{-}(y;\lambda)\\
 &=&C\lambda^{-1}\left(W_1^{+}(x;0)+\lambda\mathbf{O}\left(e^{-\alpha|x|}\right)\right)
    \left(e^{\left(\frac{\lambda}{f^{\prime}(\bar{u}_{-})}-\frac{\lambda ^2}{(f^{\prime}(\bar{u}_{-}))^3}+\mathbf{O}(\lambda^3)\right)y}\tilde{V}_1^{-}(0)\right.\\
 & &\left.+\lambda\mathbf{O}\left(e^{\left(\frac{\lambda}{f^{\prime}(\bar{u}_{-})}-\frac{\lambda ^2}{(f^{\prime}(\bar{u}_{-}))^3}+\mathbf{O}(\lambda^3)\right)y}\right) \right)\\
 &=&C\lambda^{-1}\left(W_1^{+}(x;0)\tilde{V}_1^{-}(0)e^{\left(\frac{\lambda}{f^{\prime}(\bar{u}_{-})}-\frac{\lambda ^2}{(f^{\prime}(\bar{u}_{-}))^3}+\mathbf{O}(\lambda^3)\right)y}\right.\\
 & &\left.+\lambda \mathbf{O}\left(e^{-\alpha|x|}e^{\left(\frac{\lambda}{f^{\prime}(\bar{u}_{-})}-\frac{\lambda ^2}{(f^{\prime}(\bar{u}_{-}))^3}+\mathbf{O}(\lambda^3)\right)y}\right)\right.\\
 & &\left.+\lambda W_1^{+}(x;0)\mathbf{O}\left(e^{\left(\frac{\lambda}{f^{\prime}(\bar{u}_{-})}-\frac{\lambda ^2}{(f^{\prime}(\bar{u}_{-}))^3}+\mathbf{O}(\lambda^3)\right)y} \right)\right)\\
 &=&C\lambda^{-1}\left(W_1^{+}(x;0)\tilde{V}_1^{-}(0)e^{\left(\frac{\lambda}{f^{\prime}(\bar{u}_{-})}-\frac{\lambda ^2}{(f^{\prime}(\bar{u}_{-}))^3}\right)y}\right.\\
 & &\left.+W_1^{+}(x;0)\tilde{V}_1^{-}(0)e^{\left(\frac{\lambda}{f^{\prime}(\bar{u}_{-})}-\frac{\lambda ^2}{(f^{\prime}(\bar{u}_{-}))^3}+\mathbf{O}(\lambda^3)\right)y}\right.\\
 & &\left.-W_1^{+}(x;0)\tilde{V}_1^{-}(0)e^{\left(\frac{\lambda}{f^{\prime}(\bar{u}_{-})}-\frac{\lambda ^2}{(f^{\prime}(\bar{u}_{-}))^3}\right)y}\right.\\
 & &\left. +\lambda \mathbf{O}\left(e^{-\alpha|x|}e^{\left(\frac{\lambda}{f^{\prime}(\bar{u}_{-})}-\frac{\lambda ^2}{(f^{\prime}(\bar{u}_{-}))^3}+\mathbf{O}(\lambda^3)\right)y}\right) \right)\\
 &=&C_1\lambda^{-1}W_1^{+}(x;0)\tilde{V}_1^{-}(0)e^{\left(\frac{\lambda}{f^{\prime}(\bar{u}_{-})}-\frac{\lambda ^2}{(f^{\prime}(\bar{u}_{-}))^3}\right)y}\\
 & &+e^{-\alpha|x|}e^{\left(\frac{\lambda}{f^{\prime}(\bar{u}_{-})}-\frac{\lambda ^2}{(f^{\prime}(\bar{u}_{-}))^3}
 \right)y}\left(\mathbf{O}(\lambda^{-1})\left(e^{\mathbf{O}(\lambda^3)y}-1\right)+\mathbf{O}(1)\right)\\
 &=&\begin{pmatrix} E_\lambda & E_{\lambda,y}\\ E_{\lambda,x} & E_{\lambda,xy} \end{pmatrix}
    +  \begin{pmatrix} R_\lambda & R_{\lambda,y}\\ R_{\lambda,x} & R_{\lambda,xy} \end{pmatrix}
 \end{eqnarray*}
Thus we have those representations as claimed.

For $y \leq x \leq 0$,
 \begin{eqnarray*}
 & &
  \begin{pmatrix}
  G_\lambda & G_{\lambda,y}\\
  G_{\lambda,x} & G_{\lambda,xy}
  \end{pmatrix}\\
 &=&d^+(\lambda)\phi^-(x;\lambda)\tilde{\psi}^-(y;\lambda)+\psi^-(x;\lambda)\tilde{\psi}^-(y;\lambda)\\
 &=&d^+(\lambda)W_2^{-}(x;\lambda)\tilde{W}_1^-(y;\lambda)+W_1^-(x;\lambda)\tilde{W}_1^-(y;\lambda)\\
 &=&C\lambda^{-1}\left(W_2^{-}(x;0)+\lambda\mathbf{O}\left(e^{-\alpha|x|}\right)\right)
    \left(e^{\left(\frac{\lambda}{f^{\prime}(\bar{u}_{-})}-\frac{\lambda ^2}{(f^{\prime}(\bar{u}_{-}))^3}+\mathbf{O}(\lambda^3)\right)y}\tilde{V}_1^{-}(0)\right.\\
 & &\left.+\lambda\mathbf{O}\left(e^{\left(\frac{\lambda}{f^{\prime}(\bar{u}_{-})}-\frac{\lambda ^2}{(f^{\prime}(\bar{u}_{-}))^3}+\mathbf{O}(\lambda^3)\right)y}\right) \right)\\
 & &+\left(V_1^-(\lambda)+\mathbf{O}\left(e^{-\tilde{\alpha}|x|}\right)\right)
    e^{\left(-\frac{\lambda}{f^{\prime}(\bar{u}_-)}+\frac{\lambda^2}{(f^{\prime}(\bar{u}_-))^3}+\mathbf{O}(\lambda^3)\right)x}\\
 & &\left[e^{\left(\frac{\lambda}{f^{\prime}(\bar{u}_-)}-\frac{\lambda^2}{(f^{\prime}(\bar{u}_-))^3}+\mathbf{O}(\lambda^3)\right)y}\tilde{V}_1^{-}(0)
    +\lambda \mathbf{O}\left(e^{\left(\frac{\lambda}{f^{\prime}(\bar{u}_-)}-\frac{\lambda^2}{(f^{\prime}(\bar{u}_-))^3}+\mathbf{O}(\lambda^3)\right)y}\right)\right]\\
 &=&C_2\lambda^{-1}W_2^{-}(x;0)\tilde{V}_1^{-}(0)e^{\left(\frac{\lambda}{f^{\prime}(\bar{u}_{-})}-\frac{\lambda ^2}{(f^{\prime}(\bar{u}_{-}))^3}\right)y}\\
 & &+e^{-\alpha|x|}e^{\left(\frac{\lambda}{f^{\prime}(\bar{u}_{-})}-\frac{\lambda ^2}{(f^{\prime}(\bar{u}_{-}))^3}
    \right)y}\left(\mathbf{O}(\lambda^{-1})\left(e^{\mathbf{O}(\lambda^3)y}-1\right)+\mathbf{O}(1)\right)\\
 & &+\left(V_1^-(0)+\mathbf{O}(\lambda)+\mathbf{O}\left(e^{-\tilde{\alpha}|x|}\right)\right)
    e^{\left(-\frac{\lambda}{f^{\prime}(\bar{u}_-)}+\frac{\lambda^2}{(f^{\prime}(\bar{u}_-))^3}+\mathbf{O}(\lambda^3)\right)x}\\
 & &\left[e^{\left(\frac{\lambda}{f^{\prime}(\bar{u}_-)}-\frac{\lambda^2}{(f^{\prime}(\bar{u}_-))^3}+\mathbf{O}(\lambda^3)\right)y}\tilde{V}_1^{-}(0)
    +\lambda \mathbf{O}\left(e^{\left(\frac{\lambda}{f^{\prime}(\bar{u}_-)}-\frac{\lambda^2}{(f^{\prime}(\bar{u}_-))^3}+\mathbf{O}(\lambda^3)\right)y}\right)\right]\\
 &=&C_2\lambda^{-1}W_2^{-}(x;0)\tilde{V}_1^{-}(0)e^{\left(\frac{\lambda}{f^{\prime}(\bar{u}_{-})}-\frac{\lambda ^2}{(f^{\prime}(\bar{u}_{-}))^3}\right)y}\\
 & &+e^{-\alpha|x|}e^{\left(\frac{\lambda}{f^{\prime}(\bar{u}_{-})}-\frac{\lambda ^2}{(f^{\prime}(\bar{u}_{-}))^3}
    \right)y}\left(\mathbf{O}(\lambda^{-1})\left(e^{\mathbf{O}(\lambda^3)y}-1\right)+\mathbf{O}(1)\right)\\
 & &+V_1^{-}(0)\tilde{V}_1^{-}(0)e^{\left(-\frac{\lambda}{f^{\prime}(\bar{u}_-)}+\frac{\lambda^2}{(f^{\prime}(\bar{u}_-))^3}\right)(x-y)}\\
 & &+e^{\left(-\frac{\lambda}{f^{\prime}(\bar{u}_-)}+\frac{\lambda^2}{(f^{\prime}(\bar{u}_-))^3}\right)(x-y)}
    \left[\mathbf{O}\left(e^{\mathbf{O}(\lambda^3)(x-y)}-1\right)+\mathbf{O}(\lambda)+\mathbf{O}\left(e^{-\tilde{\alpha}|x|}\right)\right]\\
 &=&C_2\lambda^{-1}W_2^{-}(x;0)\tilde{V}_1^{-}(0)e^{\left(\frac{\lambda}{f^{\prime}(\bar{u}_{-})}-\frac{\lambda ^2}{(f^{\prime}(\bar{u}_{-}))^3}\right)y}\\
 & &+V_1^{-}(0)\tilde{V}_1^{-}(0)e^{\left(-\frac{\lambda}{f^{\prime}(\bar{u}_-)}+\frac{\lambda^2}{(f^{\prime}(\bar{u}_-))^3}\right)(x-y)}\\
 & &+e^{-\alpha|x|}e^{\left(\frac{\lambda}{f^{\prime}(\bar{u}_{-})}-\frac{\lambda ^2}{(f^{\prime}(\bar{u}_{-}))^3}
    \right)y}\left(\mathbf{O}(\lambda^{-1})\left(e^{\mathbf{O}(\lambda^3)y}-1\right)+\mathbf{O}(1)\right)\\
 & &+e^{\left(-\frac{\lambda}{f^{\prime}(\bar{u}_-)}+\frac{\lambda^2}{(f^{\prime}(\bar{u}_-))^3}\right)(x-y)}
    \left[\mathbf{O}\left(e^{\mathbf{O}(\lambda^3)(x-y)}-1\right)+\mathbf{O}(\lambda)
    +\mathbf{O}\left(e^{-\tilde{\alpha}|x|}\right)\right]\\
 &=& \begin{pmatrix} E_\lambda & E_{\lambda,y}\\ E_{\lambda,x} & E_{\lambda,xy} \end{pmatrix}
    +\begin{pmatrix} S_\lambda & S_{\lambda,y}\\ S_{\lambda,x} & S_{\lambda,xy} \end{pmatrix}
    +\begin{pmatrix} R_\lambda & R_{\lambda,y}\\ R_{\lambda,x} & R_{\lambda,xy} \end{pmatrix}\\
 &=& \begin{pmatrix} E_\lambda & E_{\lambda,y}\\ E_{\lambda,x} & E_{\lambda,xy} \end{pmatrix}
    +\begin{pmatrix} S_\lambda & S_{\lambda,y}\\ S_{\lambda,x} & S_{\lambda,xy} \end{pmatrix}
    +\begin{pmatrix} R_\lambda^E & R_{\lambda,y}^E\\ R_{\lambda,x}^E & R_{\lambda,xy}^E \end{pmatrix}
    +\begin{pmatrix} R_\lambda^S & R_{\lambda,y}^S\\ R_{\lambda,x}^S & R_{\lambda,xy}^S \end{pmatrix}
 \end{eqnarray*}
Then this gives \eqref{6.41}-\eqref{6.44}.

Next we derive the derivative bounds \eqref{6.45}-\eqref{6.47}. We utilize the estimates \eqref{6.26}.

For $y \leq 0 \leq x$,
 \begin{eqnarray*}
 & &
  G_{\lambda,y}(x,y)\\
 &=&\begin{pmatrix}1 & 0\end{pmatrix}m^+(\lambda)\phi^+(x;\lambda)
    \left(\frac{\partial}{\partial y}\right)\tilde{\psi}^-(y;\lambda)\begin{pmatrix}0 \\ 1\end{pmatrix}\\
 &=&\begin{pmatrix}1 & 0\end{pmatrix}m^+(\lambda)W_1^{+}(x;\lambda)
    \left(\frac{\partial}{\partial y}\right)\tilde{W}_1^{-}(y;\lambda)\begin{pmatrix}0 \\ 1\end{pmatrix}\\
 &=&C\lambda^{-1}\begin{pmatrix}1 & 0\end{pmatrix}\left(W_1^{+}(x;0)+\lambda\mathbf{O}\left(e^{-\alpha|x|}\right)\right)\\
 & &\left(e^{\left(\frac{\lambda}{f^{\prime}(\bar{u}_{-})}-\frac{\lambda ^2}{(f^{\prime}(\bar{u}_{-}))^3}
    +\mathbf{O}(\lambda^3)\right)y}\tilde{V}_1^{-}(0)
    \left(\frac{\lambda}{f^{\prime}(\bar{u}_{-})}-\frac{\lambda ^2}{(f^{\prime}(\bar{u}_{-}))^3}+\mathbf{O}(\lambda^3)\right)
    \right.\\
 & &\left.+C\lambda e^{\left(\frac{\lambda}{f^{\prime}(\bar{u}_{-})}
    -\frac{\lambda ^2}{(f^{\prime}(\bar{u}_{-}))^3}+\mathbf{O}(\lambda^3)\right)y}(|\lambda|+e^{-\alpha|y|})
    \right)\begin{pmatrix}0 \\ 1\end{pmatrix}
 \end{eqnarray*}
and the remainder term $R_{\lambda,y}(x,y)$ should be
$$R_{\lambda,y}=e^{-\alpha|x|}e^{\left(\frac{\lambda}{f^{\prime}(\bar{u}_{-})}-\frac{\lambda ^2}{(f^{\prime}(\bar{u}_{-}))^3}
\right)y}\left(\mathbf{O}\left(e^{\mathbf{O}(\lambda^3)y}-1\right)+\mathbf{O}(\lambda)\right).$$ The $R_{\lambda,y}(x,y)$ estimates for the case $y \leq x \leq 0$ is similarly derived.
\end{proof}

\begin{rem}
{\rm In $\mathrm{Proposition}$ $\ref{Pro6.5}$, in fact we can take $W_1^{+}(x;0)=W_2^{-}(x;0)
=\begin{pmatrix}\bar{u}^{\prime}(x) \\ \bar{u}^{\prime\prime}(x)\end{pmatrix}.$}
\end{rem}
\begin{rem}
{\rm The The derivative bounds \eqref{6.45}-\eqref{6.47} is valid only for Lax and over compressive case, it does not hold in under compressive case. See \cite{MaZ}.}
\end{rem}

\section{High-frequency bounds}

Now we derive the bounds for large $|\lambda|$, on any sector contained in the resolvent set.
\begin{pro} \label{Pro7.1}
Assuming the Lax condition, it follows that for some $C,\beta,R>0$, and $\theta_1,\theta_2 >0$ sufficiently small,
 \begin{equation*}
 \left|G_{\lambda}(x,y)\right| \leq C|\lambda|^{-\frac{1}{2}}e^{-\beta^{-\frac{1}{2}}|\lambda|^{\frac{1}{2}}|x-y|},
 \end{equation*}
 \begin{equation*}
 \left|G_{\lambda,x}(x,y)\right| \leq Ce^{-\beta^{-\frac{1}{2}}|\lambda|^{\frac{1}{2}}|x-y|}, \quad
 \left|G_{\lambda,y}(x,y)\right| \leq Ce^{-\beta^{-\frac{1}{2}}|\lambda|^{\frac{1}{2}}|x-y|},
 \end{equation*}
for all $\lambda \in \Omega_{\theta} \setminus B(0,R)$.\\
(Here, we may choose any
$\beta^{-\frac{1}{2}} < \min_{\lambda \in \Omega_{\theta}\cap\{\lambda:|\lambda| \geq R\}}\mathrm{Re}(\sqrt{\frac{\lambda}{|\lambda|}})$.)
\end{pro}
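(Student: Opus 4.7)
The plan is to combine the scattering decomposition of Proposition \ref{Pro5.4} with sharp high-frequency asymptotics of the forward and dual modes, obtained by a tracking argument that is uniform in $|\lambda|$ large. First I would analyze the constant-coefficient eigenvalues $\mu_j^{\pm}(\lambda)=\bigl(f'(\bar{u}_{\pm})\pm\sqrt{(f'(\bar{u}_{\pm}))^{2}+4\lambda}\bigr)/2$ for large $|\lambda|$: on the sector $\Omega_{\theta}$ with $\theta_{1},\theta_{2}$ chosen small and $|\lambda|\geq R$ for $R$ large, the dominant balance gives $\mu_{2}^{\pm}(\lambda)=\sqrt{\lambda}\,(1+o(1))$ and $\mu_{1}^{\pm}(\lambda)=-\sqrt{\lambda}\,(1+o(1))$, with $\mathrm{Re}(\pm\mu_{2/1}^{\pm})\geq \beta^{-1/2}|\lambda|^{1/2}$ for $\beta^{-1/2}$ strictly less than $\min_{\Omega_{\theta}\cap\{|\lambda|\geq R\}}\mathrm{Re}\sqrt{\lambda/|\lambda|}$. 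The matching eigenvectors satisfy $V_{2}^{\pm}(\lambda)\sim (1,\sqrt{\lambda})^{T}$ and $V_{1}^{\pm}(\lambda)\sim(1,-\sqrt{\lambda})^{T}$.

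Next, I would produce bounds on the actual forward modes $W_{j}^{\pm}(x;\lambda)$ of \eqref{2.2} and dual modes $\tilde W_{j}^{\pm}(y;\lambda)$ of \eqref{5.17} that are uniform in large $|\lambda|$. The variable-coefficient correction satisfies $|\mathbb{A}(x;\lambda)-\mathbb{A}_{\pm}(\lambda)|=\mathbf{O}(|\bar{u}_{x}|)=\mathbf{O}(e^{-\alpha|x|})$ by Proposition \ref{Pro3.4}, and is in particular $\lambda$-independent. After a $\lambda$-dependent diagonalization of $\mathbb{A}_{\pm}(\lambda)$, the spectral gap between $\mu_{1}^{\pm}$ and $\mu_{2}^{\pm}$ is of order $|\lambda|^{1/2}$, which dominates the $\mathbf{O}(e^{-\alpha|x|})$ perturbation via the Volterra iteration underlying the Gap Lemma (Proposition \ref{Pro4.1}). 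This yields $W_{j}^{\pm}(x;\lambda)=V_{j}^{\pm}(\lambda)e^{\mu_{j}^{\pm}(\lambda)x}\bigl(1+\mathbf{O}(|\lambda|^{-1/2})\bigr)$ as $x\to\pm\infty$, and an analogous expansion for $\tilde W_{j}^{\pm}$ by duality via Lemma \ref{Lem5.2}.

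With mode bounds in hand, I would insert them into the representation of Proposition \ref{Pro5.4}. For $x>y$, the entry $G_{\lambda}(x,y)=(1,0)\,m^{+}(\lambda)\phi^{+}(x;\lambda)\tilde\psi^{-}(y;\lambda)(1,0)^{T}$ carries the exponential factor $e^{\mu_{1}^{+}(\lambda)x-\mu_{1}^{-}(\lambda)y}$, whose modulus is bounded by $e^{-\beta^{-1/2}|\lambda|^{1/2}(x-y)}$ by the preceding step. The scattering coefficient satisfies $m^{+}(\lambda)^{-1}=\det(\phi^{+},\phi^{-})|_{x=0}=\det\bigl(V_{1}^{+}(\lambda),V_{2}^{-}(\lambda)\bigr)\bigl(1+\mathbf{O}(|\lambda|^{-1/2})\bigr)$, whose leading term is $\det\bigl((1,-\sqrt{\lambda})^{T},(1,\sqrt{\lambda})^{T}\bigr)=2\sqrt{\lambda}$, giving $|m^{+}|\leq C|\lambda|^{-1/2}$. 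Combining produces the claimed bound $|G_{\lambda}|\leq C|\lambda|^{-1/2}e^{-\beta^{-1/2}|\lambda|^{1/2}|x-y|}$. Differentiation in $x$ or $y$ brings down a factor $\mu_{j}^{\pm}=\mathbf{O}(|\lambda|^{1/2})$ from the exponentials, which exactly cancels the $|\lambda|^{-1/2}$ and gives the stated derivative bounds. The remaining cases $x<y$ and $x,y$ of opposite sign follow identically from the corresponding branches in Corollary \ref{Cor5.5}.

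The main obstacle is the uniform tracking step: verifying that the order-$|\lambda|^{1/2}$ spectral separation truly dominates the fixed-rate $\mathbf{O}(e^{-\alpha|x|})$ coefficient perturbation on $\Omega_{\theta}\cap\{|\lambda|\geq R\}$ and yields an error factor that is $o(1)$ rather than merely bounded. Concretely this amounts to showing that the Gap-Lemma fixed-point operator, with kernel controlled by the large spectral gap, has norm $\mathbf{O}(|\lambda|^{-1/2})$ in a suitable weighted $L^{\infty}$ space. Once this uniformity is established, the remaining estimates reduce to the explicit algebra with determinants and exponentials indicated above.
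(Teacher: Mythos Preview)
Your route is valid but genuinely different from the paper's. The paper does not feed the scattering decomposition of Proposition~\ref{Pro5.4}/Corollary~\ref{Cor5.5} with high-frequency mode asymptotics. Instead it rescales $\bar{x}=|\lambda|^{1/2}x$, $\bar{\lambda}=\lambda/|\lambda|$, reducing the eigenvalue ODE to an $\mathbf{O}(|\lambda|^{-1/2})$ perturbation of the $\lambda$-\emph{independent} system $\bar{W}'=\bar{\mathbb{B}}\bar{W}$ with $\bar{\mathbb{B}}=\bigl(\begin{smallmatrix}0&1\\ \bar{\lambda}&0\end{smallmatrix}\bigr)$, $|\bar{\lambda}|=1$. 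Energy estimates in diagonalized coordinates yield invariant cones and a uniform flow bound $|\bar{W}(x)/\bar{W}(y)|\leq Ce^{-\tilde\beta^{-1/2}|x-y|}$; the $|\lambda|^{-1/2}$ prefactor then drops out of a coordinate-free representation $\bigl(\begin{smallmatrix}G_{\lambda}\\G_{\lambda,x}\end{smallmatrix}\bigr)=\mathcal{F}^{y\to x}\Pi_{+}(y)\bigl(\begin{smallmatrix}0\\B^{-1}\end{smallmatrix}\bigr)$ via $\Pi_{+}=\mathrm{diag}(1,|\lambda|^{1/2})\,\bar{\Pi}_{+}\,\mathrm{diag}(1,|\lambda|^{-1/2})$ with $\bar{\Pi}_{+}$ bounded on the compact set $|\bar\lambda|=1$. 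This sidesteps both the region-by-region casework and the uniform-in-$\lambda$ Gap Lemma you flag as the main obstacle: uniformity is automatic once the rescaling confines the leading part to a compact parameter set. Your approach, by contrast, stays entirely inside the mode/scattering framework already built in Sections~5--6 and is more explicit, at the cost of the sharpened tracking estimate and the case analysis across Corollary~\ref{Cor5.5}.

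One bookkeeping correction: with the paper's normalization the scattering coefficient is $m^{+}=\det(\psi^{-},\phi^{-})/\det(\phi^{+},\phi^{-})$, not $1/\det(\phi^{+},\phi^{-})$; since both determinants are $\sim 2\sqrt{\lambda}$ one gets $m^{+}=\mathbf{O}(1)$, not $\mathbf{O}(|\lambda|^{-1/2})$. The missing factor instead sits in the dual mode: solving $\tilde{V}_{1}^{-}\mathcal{S}V_{k}^{-}=\delta_{1k}$ with $V_{k}^{-}\sim\bigl(1,(-1)^{k}\sqrt{\lambda}\bigr)^{T}$ gives first component of $\tilde{V}_{1}^{-}$ equal to $-1/(2\sqrt{\lambda})$, so the $(1,1)$ entry of $m^{+}\phi^{+}(x)\tilde{\psi}^{-}(y)$ carries $|\lambda|^{-1/2}$ through $\tilde{\psi}^{-}$, not through $m^{+}$. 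With this fix your argument goes through; the derivative bounds then follow exactly as you say, since the second components of $V_{1}^{+}$ and $\tilde{V}_{1}^{-}$ are $\mathbf{O}(|\lambda|^{1/2})$ and $\mathbf{O}(1)$ respectively.
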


\begin{proof}
Setting $\bar{x}=|\lambda|^{\frac{1}{2}}x, \bar{\lambda}=\frac{\lambda}{|\lambda|},\bar{w}(\bar{x})=w(\frac{\bar{x}}{|\lambda|^{\frac{1}{2}}})=w(x)$, we obtain
$\bar{w}^{\prime\prime}=\bar{\lambda}\bar{w}+\mathbf{O}(|\lambda|^{-\frac{1}{2}})(\bar{w}+\bar{w}^{\prime})$,
or
\begin{equation} \label{7.5}
\overline{W}^{\prime}=\mathbb{\bar{B}}\overline{W}+\mathbf{O}(|\lambda|^{-\frac{1}{2}})\overline{W},
\end{equation}
where $\overline{W}=(\bar{w},\bar{w}^{\prime})^{T}$, and
$\mathbb{\bar{B}}:=\begin{pmatrix}0 & 1 \\ \bar{\lambda} & 0\end{pmatrix}, \mathbb{\bar{B}}^{\prime}=\mathbf{O}(|\lambda|^{-\frac{1}{2}}), |\bar{\lambda}|=1.$
It is easily computed that the eigenvalues of $\mathbb{\bar{B}}$ are $\bar{\mu}=\mp\sqrt{\bar{\lambda}}$,
We know that there exists some $\beta>0$, such that
\begin{equation} \label{7.8}
\left|\mathrm{Re} \sqrt{\bar{\lambda}}\right| > \beta^{-\frac{1}{2}}
\end{equation}
for all $\lambda \in \Omega_{\theta}$, hence the stable and unstable subspaces of each $\mathbb{\bar{B}}{\bar{x}}$ are both of dimension $n$, and separated by a spectral gap of more than $2\beta^{-\frac{1}{2}}$. Since $\mathbb{\bar{B}}(\lambda,\bar{x})$ varies within a compact set, it follows that there are continuous eigenprojections $P_{\pm}(\mathbb{\bar{B}})$ taking $\overline{W}$ onto the stable and unstable subspaces, respectively, of $\mathbb{\bar{B}}$, with
$|P_{\pm}^{\prime}|=\mathbf{O}(|\lambda|^{-\frac{1}{2}})$.\\
Introducing new coordinates $z_{\pm}=P_{\pm}\bar{w}$, we thus obtain a diagonal system
\begin{equation} \label{7.9}
 \begin{pmatrix}z_{+} \\ z_{-} \end{pmatrix}^{\prime}=\begin{pmatrix}f^{\prime}(\bar{u}_{+}) & 0\\0 & f^{\prime}(\bar{u}_{-}) \end{pmatrix}
 \begin{pmatrix} z_{+} \\ z_{-} \end{pmatrix}+\mathbf{O}\left(|\lambda|^{-\frac{1}{2}}\right)\begin{pmatrix}z_{+} \\ z_{-} \end{pmatrix},
\end{equation}
We choose $\beta$ large enough such that
$\mathrm{Re}f^{\prime}(\bar{u}_{\pm}) \lessgtr \mp \beta^{-\frac{1}{2}}$.
and hence
\begin{equation} \label{7.11}
\frac{|\bar{w}|}{C} \leq |z| \leq C|\bar{w}|.
\end{equation}
From \eqref{7.9}, we obtain the "energy estimates"
\begin{equation} \label{7.12}
\begin{aligned}
\langle z_{\pm},z_{\pm} \rangle^{\prime}
       &=\langle z_{\pm}, 2\mathrm{Re}f^{\prime}(\bar{u}_{\pm})z_{\pm} \rangle + \mathrm{O}\left(|\lambda|^{-\frac{1}{2}}\right)
          (\langle z_{+},z_{+} \rangle+\langle z_{-},z_{-}\rangle)\\
&\lessgtr\mp \beta^{-\frac{1}{2}}\langle z_{\pm},z_{\pm} \rangle+\mathrm{O}\left(|\lambda|^{-\frac{1}{2}}\right)
          (\langle z_{+},z_{+} \rangle+\langle z_{-},z_{-}\rangle)
\end{aligned}
\end{equation}
In consequence, the ratios $r_{+}:=\frac{\langle z_{-},z_{-}\rangle}{\langle z_{+},z_{+}\rangle}$ and $r_{-}:=\frac{\langle z_{+},z_{+}\rangle}{\langle z_{-},z_{-}\rangle}$
satisfy
\begin{equation} \label{7.14}
r_{\pm}^{\prime} \gtrless \pm 4\beta^{-\frac{1}{2}}r_{\pm} \mp C|\lambda|^{-\frac{1}{2}}(1+r_{\pm}+r_{\pm}^{2})
\end{equation}
for some $C>0$.
From \eqref{7.14} it follows easily that the cones $\mathbb{K}_{\mp}:=\{0<r_{\mp}<\frac{\beta^{-\frac{1}{2}}}{C}|\lambda|^{\frac{1}{2}}\}$ are invariant under forward and backward flow, respectively, of \eqref{7.9}, provided that
$C|\lambda|^{-\frac{1}{2}}\beta^{\frac{1}{2}}<\frac{4}{3}$.
Since the stable/unstable subspaces of $\begin{pmatrix}f^{\prime}(\bar{u}_{+}) & 0\\0 & f^{\prime}(\bar{u}_{-}) \end{pmatrix}$ at $x=\pm \infty$ are precisely
$\{z_{\pm}=0\}$, we have that the stable/unstable subspaces of
$\begin{pmatrix}f^{\prime}(\bar{u}_{+}) & 0\\0 & f^{\prime}(\bar{u}_{-}) \end{pmatrix}+\mathbf{O}\left(|\lambda|^{-\frac{1}{2}}\right)$ at $x=\pm\infty$ lie within the
respective cones $\mathbb{K}_{\pm}$, provided $|\lambda|$ is sufficiently large. It follows that the stable/unstable manifolds of solutions of \eqref{7.9} lie within
$\mathbb{K}_{\pm}$ for all $x$. Plugging this information back into \eqref{7.12}, we find that
$(|z_{\pm}|^{2})^{\prime} \lessgtr \mp 2\tilde{\beta}^{-\frac{1}{2}}|z_{\pm}|^{2}$
for any solution $(z_{+},z_{-})^{T}$ decaying at $x=\pm\infty$, hence
$\frac{|z_{+}(x)|}{|z_{-}(y)|} \leq e^{-\tilde{\beta}^{-\frac{1}{2}}|x-y|},$
where $0<\tilde{\beta}<\beta$, and thus
$\frac{|z(x)|}{|z(y)|} \leq C_{1} e^{-\tilde{\beta}^{-\frac{1}{2}}|x-y|},$
for any $x \lessgtr y$, provided $|\lambda|$ is sufficiently large. This gives
\begin{equation} \label{7.18}
\frac{\overline{W}(x)}{\overline{W}(y)} \leq C_{1}C^{2}e^{-\tilde{\beta}^{-\frac{1}{2}}|x-y|},
\end{equation}
where $C$ is as in \eqref{7.11}. Further, untangling intermediate coordinate changes, we find that
\begin{pro}$(\mathbb{K})$
The stable/unstable manifolds of solutions of \eqref{7.5} lie within angle $\mathbf{O}\left(|\lambda|^{-\frac{1}{2}}\right)$ of the stable/unstable subspaces of
$\mathbb{\bar{B}}(x)$.
\end{pro}
Now, recall the coordinate-free representation of the Green function as
$\begin{pmatrix}G_{\lambda} \\ G_{\lambda,x} \end{pmatrix}=\mathcal{F}^{y \to x}\Pi_{+}(y)
 \begin{pmatrix} 0 \\  B^{-1}(y)\end{pmatrix}.$
Translating the bound \eqref{7.18} back to the original system \eqref{2.1}, we obtain
\begin{equation} \label{7.19}
|\mathcal{F}^{y \to x}| \leq C_{1}C^{2}e^{-\tilde{\beta}|\lambda|^{\frac{1}{2}}|x-y|},
\end{equation}
Likewise, the projection $\Pi_{+}$ can be related to its counterparts $\bar{\Pi}_{+}$ for the rescaled system by the factorization
$
\Pi_{+}=\begin{pmatrix}1 & 0 \\ 0 & |\lambda|^{\frac{1}{2}} \end{pmatrix}\bar{\Pi}_{+} \begin{pmatrix}1 & 0 \\ 0 & |\lambda|^{-\frac{1}{2}} \end{pmatrix},
$
and similarly for $\tilde{\Pi}_{-}$. Since the stable/unstable manifolds stay separated, by Proposition $(\mathbb{K})$, and $\bar{\lambda}$ varies within a compact set, the projections $\bar{\Pi}_{+}$ and $\tilde{\bar{\Pi}}_{-}$ are uniformly bounded. Thus, we have
$
\Pi_{+}(y)\begin{pmatrix}0 \\ B^{-1}(y)\end{pmatrix}
=\begin{pmatrix}1 & 0 \\ 0 & |\lambda|^{\frac{1}{2}}\end{pmatrix} \mathbf{O}(1) \begin{pmatrix}1 & 0 \\ 0 & |\lambda|^{-\frac{1}{2}}\end{pmatrix}
   \begin{pmatrix}0 \\ B^{-1}(y)\end{pmatrix}
=\begin{pmatrix}\mathbf{O}\left(|\lambda|^{-\frac{1}{2}}\right) \\ \mathbf{O}(1)\end{pmatrix}
$.
Combining with \eqref{7.19}, and recalling that $0<\tilde{\beta}<\beta$ was arbitrary in \eqref{7.8}, we obtain the claimed bounds on $|G_{\lambda}|$ and $|G_{\lambda,x}|$.
The bound on $|G_{\lambda,y}|$ follows by symmetric argument applied to the adjoint operator $L^{\ast}$, or, equivalently, using the symmetric representation
$
\begin{pmatrix} G_{\lambda} & G_{\lambda,y} \end{pmatrix} = \begin{pmatrix} 0 & B^{-1}(y) \end{pmatrix}
\tilde{\Pi}_{-}(x)\mathcal{\tilde{F}}^{x \to y},
$
where $\mathcal{\tilde{F}}^{x \to y}$ denotes the flow of the adjoint eigenvalue equation.
\end{proof}

\section{Pointwise Green function bounds} \label{sectiongreenfunctionbounds}

In this section, let us recall the representation in \cite{MaZ} and \cite{ZH},
\begin{equation} \label{8.1}
G(x,t;y)=\frac{1}{2\pi i}\mathrm{P.V.}\int_{\eta-i\infty}^{\eta+i\infty}e^{\lambda t}G_{\lambda}(x,y)d\lambda
        =\frac{1}{2\pi i}\lim_{T \to \infty}\int_{\eta-iT}^{\eta+iT}e^{\lambda t}G_{\lambda}(x,y)d\lambda
\end{equation}
which is valid for $\eta$ sufficiently large.
We will spend this entire section proving Theorem \ref{greenfunctionbounds}.

\begin{proof}[Proof of Theorem \ref{greenfunctionbounds}]
{\bf Case I. $\frac{|x-y|}{t}$ large.}
We first treat the trivial case that $\frac{|x-y|}{t} \geq S$, $S$ sufficiently large, the regime in which standard short-time parabolic theory applies. Set
\begin{equation} \label{8.7}
\bar{\alpha}:=\frac{|x-y|}{2\beta t}, \quad R:= \beta \bar{\alpha}^2,
\end{equation}
where $\beta$ is as in Proposition \ref{Pro7.1}, and consider again the representation of $G$:
$
G(x,t;y)=\frac{1}{2\pi i}\int_{\Gamma_1\cup \Gamma_2}e^{\lambda t} G_{\lambda}(x,y) d\lambda,
$
where $\Gamma_1:= \partial B(0,R)\cap \bar{\Omega}_\theta$ and $\Gamma_2:= \partial \Omega_\theta \setminus B(0,R)$.
Note that the intersection of $\Gamma$ with the real axis is $\lambda_{\mathrm{min}}=R=\beta \bar{\alpha}^2$.
By the large $|\lambda|$ estimates of Proposition \ref{Pro7.1}, we have for all $\lambda \in \Gamma_1\cup \Gamma_2$ that
\begin{equation} \label{8.9}
|G_{\lambda}(x,y)|\leq C |\lambda|^{-1/2} e^{-\beta^{-\frac{1}{2}}|\lambda|^{\frac{1}{2}}|x-y|}.
\end{equation}
Further, we have
\begin{eqnarray} \label{8.10}
\mathrm{Re} \lambda &\leq&  R(1- \eta\omega^2), \quad \lambda\in \Gamma_1,\\
\mathrm{Re} \lambda &\leq& \mathrm{Re}\lambda_0 - \eta (|\mathrm{Im} \lambda| - |\mathrm{Im} \lambda_0|), \quad \lambda \in \Gamma_2,
\end{eqnarray}
for $R$ sufficiently large, where $\omega$ is the argument of $\lambda$ and $\lambda_0$ and $\lambda_0^*$ are the two points of intersection of $\Gamma_1$ and $\Gamma_2$,
for some $\eta>0$ independent of $\bar{\alpha}$.

Combining \eqref{8.9},\eqref{8.10} and \eqref{8.7}, we obtain
\begin{align*}
\left|\int_{\Gamma_{1}} e^{\lambda t} G_{\lambda}(x,y)  d\lambda\right|
&\leq \int_{\Gamma_{1}}C |\lambda|^{-\frac{1}{2}} e^{(\mathrm{Re}\lambda) t -\beta^{-\frac{1}{2}} |\lambda^{\frac{1}{2}}||x-y| } d\lambda \\
&\leq C e^{-\beta \bar{\alpha}^{2}t} \int_{-M}^{+M} R^{-\frac{1}{2}}e^{-\beta R \eta \omega^2 t} R d\omega
\leq C t^{-\frac{1}{2}} e^{-\beta \bar{\alpha}^{2}t}.
\end{align*}
Likewise,
\begin{align*}
\left|\int_{\Gamma_{2}} e^{\lambda t} G_{\lambda}(x,y )d\lambda \right|
&\leq \int_{\Gamma_{2}} C |\lambda|^{-\frac{1}{2}} C e^{(\mathrm{Re}\lambda)t-\beta^{-\frac{1}{2}}|\lambda|^{\frac{1}{2}}||x-y|} d\lambda \\
&\leq C e^{(\mathrm{Re}\lambda_{0})t-\beta^{-\frac{1}{2}}|\lambda_{0}|^{\frac{1}{2}}|x-y|}
       \int_{\Gamma_{2}} |\lambda|^{-\frac{1}{2}}e^{(\mathrm{Re}\lambda-\mathrm{Re}\lambda_{0})t}|d\lambda| \\
&\leq C e^{-\beta \bar{\alpha}^{2}t}\int_{\Gamma_2}|\mathrm{Im}\lambda|^{-\frac{1}{2}}
       e^{-\eta(|\mathrm{Im}\lambda|-|\mathrm{Im}\lambda_{0}|)t}|d\mathrm{Im}\lambda| \\
&\leq Ct^{-\frac{1}{2}} e^{-\beta \bar{\alpha}^{2}t}.
\end{align*}

Combining these last two estimates, and recalling \eqref{8.7}, we have
$$
|G(x,t;y)|
\leq Ct^{-\frac{1}{2}} e^{-\frac{\beta \bar{\alpha}^{2}t}{2}} e^{-\frac{(x-y)^{2}}{8\beta t}}
\leq Ct^{-\frac{1}{2}} e^{-\eta t} e^{-\frac{(x-y)^{2}}{8\beta t}},
$$
for $\eta>0$ independent of $\bar{\alpha}$. Observing that
$
\frac{|x-y-at|}{2t} \leq \frac{|x-y|}{t} \le \frac{2|x-y-at|}{t}
$
for any bounded $a$, for $|x-y|/t$ sufficiently large, we find that $|G|$ can be absorbed in the residual term $\mathbf{O}\left(e^{-\eta t}e^{-\frac{|x-y|^2}{Mt}}\right)$ for $t\geq \epsilon$, any $\epsilon>0$, and in the residual term\\
$\mathbf{O}\left((t+1)^{-\frac{1}{2}}e^{-\eta x^+}t^{-\frac{1}{2}}e^{-\frac{(x-y-f^{\prime}(\bar{u}_{-})t)^2}{Mt}}\right)$ for $t$ small.

{\bf Case II. $\frac{|x-y|}{t}$ bounded.}
We now turn to the critical case where $\frac{|x-y|}{t} \leq S$ for some fixed $S$. In this regime, note that any contribution of order $e^{\theta t},\theta >0$, may be absorbed in the residual term $R$; we shall use this observation repeatedly. We begin by converting contour integral \eqref{8.1} into a more convenient form decomposing high, intermediate, and low frequency contributions.
\begin{lem} \label{Lem8.2}
If the Lax condition holds, we can use the following decomposition:
 \begin{equation*}
 G(x,t;y)
 = \mathbf{I} + \mathbf{II}
 = \frac{1}{2\pi i}\int_{\Gamma_{2}} e^{\lambda t} G_{\lambda}(x,y) d\lambda+\frac{1}{2\pi i}\int_{\Gamma^{\prime}} e^{\lambda t} G_{\lambda}(x,y) d\lambda
 \end{equation*}
where $\Gamma^{\prime}:=[-\eta_{1}-iR,\eta-iR]\cup[\eta-iR,\eta+iR]\cup[\eta+iR,-\eta_{1}+iR]$, and
$\Gamma_{2}:=\partial \Omega_{\theta}\setminus\Omega$
with $\Omega_{\theta}$ as defined in section \ref{sectioncon}, for any $\eta>0$ such that \eqref{8.1} holds, $R$ sufficiently large, and $\eta_{1}>0$ sufficiently small such that $\Omega \setminus B(0,r)$ is compactly contained in the set of consistent splitting $\Lambda$ for some small $r>0$ to be chosen later, where $\Omega:=\{\lambda:-\eta_{1}\leq \mathrm{Re} \lambda \}$.
\end{lem}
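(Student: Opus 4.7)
The plan is a contour-deformation argument starting from the Bromwich representation \eqref{8.1}. Since $\Gamma'$ already contains the vertical segment $[\eta-iR,\eta+iR]$ of the original Bromwich line, the deformation only needs to push the tails $\{\mathrm{Re}\lambda = \eta,\ |\mathrm{Im}\lambda|>R\}$ leftward, first onto the horizontal legs of $\Gamma'$ at $\mathrm{Im}\lambda = \pm R$ and then onto $\Gamma_2 = \partial\Omega_\theta \setminus \Omega$. The pole of $G_\lambda$ at $\lambda=0$ (visible in the $\lambda^{-1}$ factor in Proposition~\ref{Pro6.5}) is never encountered, because the middle of the contour stays fixed at $\mathrm{Re}\lambda = \eta > 0$ throughout the homotopy, while the tails are moved only at heights $|\mathrm{Im}\lambda| > R > 0$, so no intermediate contour passes through the origin.

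First I would fix $\eta_1, \theta_1, \theta_2 > 0$ and $r > 0$ small enough that $\Omega \setminus B(0,r)$ is compactly contained in the region of consistent splitting $\Lambda$, as stipulated. On this set $G_\lambda$ is meromorphic with only finitely many poles (by the general resolvent theory cited from \cite{He} in \S\ref{sectioncon}), and under the Lax condition the standard scalar Evans-function analysis forces the only zero of $D_L$ in $\overline{\Omega}_\theta \cap \{\mathrm{Re}\lambda \geq -\eta_1\}$ to be the simple translational zero at $\lambda = 0$. Consequently $G_\lambda$ is analytic throughout the swept region, and Cauchy's theorem applied to the truncation of that region at $|\mathrm{Im}\lambda| \le T$ equates the two contour integrals up to horizontal cross-cut contributions at $\mathrm{Im}\lambda = \pm T$.

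To pass $T \to \infty$, I would invoke the high-frequency bound of Proposition~\ref{Pro7.1}: on each horizontal cross-cut $|G_\lambda(x,y)| \le C|\lambda|^{-1/2}e^{-\beta^{-1/2}|\lambda|^{1/2}|x-y|}$, and $|e^{\lambda t}| \le e^{\eta t}$ since $\mathrm{Re}\lambda \le \eta$ on the cross-cut. The cross-cut has fixed horizontal length $\eta + \eta_1$ while $|\lambda| \ge T$ on it, so its contribution is $\mathbf{O}(T^{-1/2}e^{\eta t})$, which vanishes as $T \to \infty$. This finishes the deformation and yields the claimed identity.

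The principal obstacle is the spectral statement that no pole of $G_\lambda$ other than the one at $\lambda = 0$ lies in $\overline{\Omega}_\theta \cap \{\mathrm{Re}\lambda \ge -\eta_1\}$, uniformly in some $\eta_1 > 0$. In the scalar Lax setting this can be verified directly from the eigenvalue equation \eqref{2.1} via a comparison/energy argument using the sign-definiteness of $\bar u'$ (the translational zero mode), but it is this spectral claim — rather than the contour manipulation itself — that carries the real content of the lemma, since once it is in hand the Cauchy deformation and the high-frequency decay estimate are routine.
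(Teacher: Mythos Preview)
Your contour-deformation outline is correct and matches what the paper has in mind; indeed the paper gives no argument at all, declaring the proofs of this lemma and the next one ``trivial.'' The ingredients you cite --- Cauchy's theorem for the deformation and the high-frequency decay of Proposition~\ref{Pro7.1} to kill the cross-cuts at $\mathrm{Im}\lambda=\pm T$ --- are exactly the right ones.

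One point worth tightening: you identify as the ``principal obstacle'' the spectral claim that $\lambda=0$ is the only pole of $G_\lambda$ in $\overline{\Omega}_\theta\cap\{\mathrm{Re}\lambda\ge -\eta_1\}$, and you sketch an Evans-function argument for it. For \emph{this} lemma that analysis is unnecessary. As you yourself observe, the middle segment $[\eta-iR,\eta+iR]$ of the Bromwich line is left untouched, and the deformation of the tails sweeps only through the region $\{|\mathrm{Im}\lambda|\ge R\}\cap\Omega_\theta$. For $R$ chosen as in Proposition~\ref{Pro7.1}, that region lies inside $\Omega_\theta\setminus B(0,R)$, where the uniform bound $|G_\lambda|\le C|\lambda|^{-1/2}e^{-\beta^{-1/2}|\lambda|^{1/2}|x-y|}$ already forces analyticity (a bounded meromorphic function has no poles). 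So no separate Evans-function input is needed here; the spectral information near $\lambda=0$ enters only in the \emph{next} deformation (Lemma following this one and the treatment of the $E_\lambda$ term), where the contour is pushed across the origin and a residue is picked up. In short, your proof is right, but the part you flag as the hard step is in fact not needed at this stage.
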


\begin{lem}
The term $\mathbf{II}$ in Lemma \ref{Lem8.2} may be further decomposed as
 \begin{equation*}
 \mathbf{II}
 = \mathbf{\tilde{II}} + \mathbf{III}
 = \frac{1}{2\pi i}\left(\int_{-\eta_{1}-iR}^{-\eta_{1}-i\frac{r}{2}}+\int_{-\eta_{1}+i\frac{r}{2}}^{-\eta_{1}+iR}\right)e^{\lambda t}G_{\lambda}(x,y)d\lambda
 +\frac{1}{2\pi i}\int_{\tilde{\Gamma}}e^{\lambda t}G_{\lambda}(x,y)d\lambda
 \end{equation*}
and
$
\tilde{\Gamma}:=[-\eta_{1}-i\frac{r}{2},\eta-i\frac{r}{2}]\cup [\eta-i\frac{r}{2},\eta+i\frac{r}{2}]\cup [\eta+i\frac{r}{2},-\eta_{1}+i\frac{r}{2}],
$
for any $\eta,r>0$, and $\eta_{1}$ sufficiently small with respect to $r$.
\end{lem}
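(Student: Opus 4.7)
The plan is to establish this decomposition by a direct application of Cauchy's theorem to two rectangular contours in the complex $\lambda$-plane. Let
\begin{equation*}
R_{\pm} := \{\lambda \in \mathbb{C}: -\eta_1 \leq \mathrm{Re}\,\lambda \leq \eta,\ \tfrac{r}{2} \leq \pm\,\mathrm{Im}\,\lambda \leq R\}.
\end{equation*}
A routine tracing of orientations shows that $\partial R_+ \cup \partial R_-$, traversed counterclockwise, consists precisely of the contour $\Gamma'$ together with $\tilde{\Gamma}$ and the two vertical segments constituting $\tilde{\mathbf{II}}$ (the latter two with the opposite orientation from that in the statement of the lemma). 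Consequently, once $e^{\lambda t} G_\lambda(x,y)$ is holomorphic on $\overline{R_+} \cup \overline{R_-}$, Cauchy's theorem applied to each rectangle yields zero, and rearranging the pieces gives precisely $\mathbf{II} = \tilde{\mathbf{II}} + \mathbf{III}$.

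The one substantive input is therefore the holomorphy of $G_\lambda$ on $\overline{R_\pm}$. Since $e^{\lambda t}$ is entire, this reduces to showing $G_\lambda$ has no singularities inside $\overline{R_\pm}$. From Proposition \ref{Pro6.5}, $G_\lambda$ extends meromorphically to $\Omega_\theta$, and the only pole in a neighborhood of the origin is the simple pole at $\lambda = 0$ manifest in the $\lambda^{-1}$ prefactor of \eqref{6.38} and \eqref{6.41} (coming from the simple Evans zero forced by translation invariance); scalar spectral stability under the Lax condition excludes any further pole in $\bar{\Omega}_\theta$. I would therefore first choose $r > 0$ small enough that $B(0, r) \cap \Omega_\theta$ contains only the pole at the origin, and then pick $\eta_1 > 0$ sufficiently small with respect to $r$ (for instance $\eta_1 < \theta_1 + \tfrac{1}{2}\theta_2 r$) so that the vertical line $\mathrm{Re}\,\lambda = -\eta_1$ intersected with $|\mathrm{Im}\,\lambda| \geq r/2$ remains inside $\Omega_\theta$. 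With these choices, both $\overline{R_\pm}$ sit inside $\Omega_\theta \setminus B(0, r/2)$ and hence avoid all singularities of $G_\lambda$.

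Beyond this pole-counting, the argument is a pure contour deformation and the rest is bookkeeping; no new estimate is required since the rectangles are compact, so the integrand is bounded and Cauchy's theorem applies directly. The hard part, such as it is, will be only to record explicitly the constraint on $\eta_1$ relative to $r$ that keeps the left vertical side of the rectangles inside the domain of meromorphic extension of $G_\lambda$.
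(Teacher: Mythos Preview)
Your proposal is correct and is exactly the standard contour-deformation argument the paper has in mind; the paper itself simply declares the proofs of this lemma and Lemma~\ref{Lem8.2} ``trivial'' and gives no details, so your write-up via Cauchy's theorem on the two rectangles $R_\pm$ is in fact more explicit than the original.
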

The proofs of these two lemmas are trivial. We are going to estimate terms $\mathbf{I},\mathbf{\tilde{II}}$ and $\mathbf{III}$ respectively.

The term $\mathbf{I}$ may be estimated exactly as was term $\int_{\Gamma_{2}} e^{\lambda t} G_{\lambda}(x,y )d\lambda$ in the large $\frac{|x-y|}{t}$ case (Case I.), to
obtain contribution
$\mathbf{O}\left(t^{-\frac{1}{2}}e^{-\eta_{1} t}\right)$ absorbable again in the residual term
$\mathbf{O}\left(e^{-\eta t}e^{-\frac{|x-y|^2}{Mt}}\right)$ for $t \geq \epsilon$, any $\epsilon >0$, and in the residual term

$\mathbf{O}\left((t+1)^{-\frac{1}{2}}e^{-\eta x^+}t^{-\frac{1}{2}}e^{-\frac{(x-y-f^{\prime}(\bar{u}_{-})t)^2}{Mt}}\right)$ for $t$ small.
To estimate the term $\mathbf{\tilde{II}}$, we use the fact that $|G_{\lambda}(x,y)| \leq C e^{-\eta |x-y|}$ for $\lambda$ on any compact subset $K$ of $\rho(L)\cap \Lambda$, where $C>0$ and $\eta>0$ depend only on $K,L$.
\begin{eqnarray*}
\left|\mathbf{\tilde{II}}\right|
&\leq& \frac{1}{2\pi}\left(\left|\int_{-R}^{-\frac{r}{2}}e^{(-\eta_{1}+i\xi)t}e^{-\eta |x-y|}d\xi
       +\int_{\frac{r}{2}}^{R}e^{(-\eta_{1}+i\xi)t}e^{-\eta |x-y|}d\xi\right|\right)\\
&  = & \frac{1}{2\pi}e^{-\eta_{1}t}e^{-\eta |x-y|}\left|\int_{-R}^{-\frac{r}{2}}e^{i\xi t}d\xi+\int_{\frac{r}{2}}^{R}e^{i\xi t}d\xi\right|\\
&  = & \frac{1}{2\pi}e^{-\eta_{1}t}e^{-\eta |x-y|}t^{-1}\left|e^{-i\frac{rt}{2}}-e^{-iRt}+e^{iRt}-e^{i\frac{rt}{2}}\right|\\
&\leq& \frac{2}{\pi}t^{-1}e^{-\eta_{1}t}e^{-\eta |x-y|}
\end{eqnarray*}
Thus $\mathbf{\tilde{II}}$ can be absorbed in the residual term $R$.

It remains to estimate the low frequency term $\mathbf{III}=\frac{1}{2\pi i}\int_{\tilde{\Gamma}}e^{\lambda t}G_{\lambda}(x,y)d\lambda$.

{\bf Case. $t \leq 1$.}
First observe that estimates in the short-time regime $t \leq 1$ are trivial, since then $|e^{\lambda t}G_{\lambda}(x,y)|$ is uniformly bounded on the compact set $\tilde{\Gamma}$, and we have $|G(x,t;y)|\leq C\leq e^{-\theta t}$ for $\theta >0$ sufficiently small. But, likewise, $E$ and $S$ are uniformly bounded in this regime, hence time-exponentially decaying. As observed previously, all such terms are negligible, begin absorbable in the error term $R$. Thus, we may add $E+S$ and subtract $G$ to obtain the result.

{\bf Case. $t \geq 1$.}
Next, consider the critical (long-time) regime $t \geq 1$. For definiteness, take $y \leq x \leq 0$; the other two cases are similar. Decomposing,
\begin{equation*}
    \frac{1}{2\pi i}\int_{\tilde{\Gamma}}e^{\lambda t}G_{\lambda}(x,y)d\lambda
=   \frac{1}{2\pi i}\int_{\tilde{\Gamma}}e^{\lambda t}E_{\lambda}(x,y)d\lambda
   +\frac{1}{2\pi i}\int_{\tilde{\Gamma}}e^{\lambda t}S_{\lambda}(x,y)d\lambda
   +\frac{1}{2\pi i}\int_{\tilde{\Gamma}}e^{\lambda t}R_{\lambda}(x,y)d\lambda
\end{equation*}
with $E_{\lambda}$, $S_{\lambda}$ and $R_{\lambda}$ as defined in Proposition \ref{Pro6.5}, we consider in turn each of the three terms on the right-hand side.

\textit{The $E_{\lambda}$ term.} Let us first consider the dominant term
$
\frac{1}{2\pi i}\int_{\tilde{\Gamma}}e^{\lambda t}E_{\lambda}(x,y)d\lambda
$
which by (\ref{6.38}) is given by
$
C_1\bar{u}^{\prime}(x)\Xi(x,t;y),
$
where
$
\Xi(x,t;y):=\frac{1}{2\pi i}\int_{\tilde{\Gamma}}\lambda^{-1}e^{\lambda t}
e^{\left(\frac{\lambda}{f^{\prime}(\bar{u}_{-})}-\frac{\lambda ^2}{(f^{\prime}(\bar{u}_{-}))^3}\right)y}d\lambda.
$
Using Cauchy's theorem,
\begin{equation*}
\frac{1}{2\pi i}\left(\int_{\tilde{\Gamma}}+\int_{-\eta_{1}+i\frac{r}{2}}^{-\eta_{1}-i\frac{r}{2}}\right)
\lambda^{-1}e^{\lambda t}
e^{\left(\frac{\lambda}{f^{\prime}(\bar{u}_{-})}-\frac{\lambda ^2}{(f^{\prime}(\bar{u}_{-}))^3}\right)y}d\lambda
=\mathrm{Res}_{\lambda=0}\lambda^{-1}e^{\lambda t}
e^{\left(\frac{\lambda}{f^{\prime}(\bar{u}_{-})}-\frac{\lambda ^2}{(f^{\prime}(\bar{u}_{-}))^3}\right)y},
\end{equation*}
thus we may move the contour $\tilde{\Gamma}$ to obtain
\begin{align*}
\Xi(x,t;y)
&=\frac{1}{2\pi i}\int_{\tilde{\Gamma}}\lambda^{-1}e^{\lambda t}
   e^{\left(\frac{\lambda}{f^{\prime}(\bar{u}_{-})}-\frac{\lambda ^2}{(f^{\prime}(\bar{u}_{-}))^3}\right)y}d\lambda\\
&=-\frac{1}{2\pi i}\int_{-\eta_{1}+i\frac{r}{2}}^{-\eta_{1}-i\frac{r}{2}}
   \lambda^{-1}e^{\lambda t}
   e^{\left(\frac{\lambda}{f^{\prime}(\bar{u}_{-})}-\frac{\lambda ^2}{(f^{\prime}(\bar{u}_{-}))^3}\right)y}d\lambda
   +\mathrm{Res}_{\lambda=0}\lambda^{-1}e^{\lambda t}
   e^{\left(\frac{\lambda}{f^{\prime}(\bar{u}_{-})}-\frac{\lambda ^2}{(f^{\prime}(\bar{u}_{-}))^3}\right)y}\\
&=\frac{1}{2\pi i}\left(\int_{-\eta_{1}-i\frac{r}{2}}^{-i\frac{r}{2}}+\int_{i\frac{r}{2}}^{-\eta_{1}+i\frac{r}{2}}\right)
   \lambda^{-1}e^{\lambda t}e^{\left(\frac{\lambda}{f^{\prime}(\bar{u}_{-})}-\frac{\lambda^2}
   {(f^{\prime}(\bar{u}_{-}))^3}\right)y}d\lambda\\
&\quad +\frac{1}{2\pi i}\left(\int_{-i\frac{r}{2}}^{-i\delta}+\int_{i\delta}^{i\frac{r}{2}}\right)
   \lambda^{-1}e^{\lambda t}
   e^{\left(\frac{\lambda}{f^{\prime}(\bar{u}_{-})}-\frac{\lambda ^2}{(f^{\prime}(\bar{u}_{-}))^3}\right)y}d\lambda\\
&\quad +\frac{1}{2\pi i}\int_{\gamma}\lambda^{-1}e^{\lambda t}
   e^{\left(\frac{\lambda}{f^{\prime}(\bar{u}_{-})}-\frac{\lambda ^2}{(f^{\prime}(\bar{u}_{-}))^3}\right)y}d\lambda
   +\mathrm{Res}_{\lambda=0}\lambda^{-1}e^{\lambda t}
   e^{\left(\frac{\lambda}{f^{\prime}(\bar{u}_{-})}-\frac{\lambda ^2}{(f^{\prime}(\bar{u}_{-}))^3}\right)y}
\end{align*}
where $\gamma$ is the left half circle $\gamma:=\{\delta e^{i\theta}:\frac{\pi}{2}\leq \theta\leq\frac{3\pi}{2}\}$, for some $\delta>0$. Notice that
$
\mathrm{Res}_{\lambda=0}\lambda^{-1}e^{\lambda t}
   e^{\left(\frac{\lambda}{f^{\prime}(\bar{u}_{-})}-\frac{\lambda ^2}{(f^{\prime}(\bar{u}_{-}))^3}\right)y}
=\lim_{\lambda \to 0}e^{\lambda t}
   e^{\left(\frac{\lambda}{f^{\prime}(\bar{u}_{-})}-\frac{\lambda ^2}{(f^{\prime}(\bar{u}_{-}))^3}\right)y}
=1
$\\
and
$
\lim_{\delta \to 0}\frac{1}{2\pi i}\int_{\gamma}\lambda^{-1}e^{\lambda t}
   e^{\left(\frac{\lambda}{f^{\prime}(\bar{u}_{-})}-\frac{\lambda ^2}{(f^{\prime}(\bar{u}_{-}))^3}\right)y}d\lambda
=-\frac{1}{2},
$
thus sending $\delta \to 0$ in the above calculations gives us
\begin{eqnarray*}
\Xi(x,t;y)
&=&\frac{1}{2\pi}\mathrm{P.V.}\int_{-\frac{r}{2}}^{\frac{r}{2}}
   (i\xi)^{-1}e^{i\xi t}
   e^{\left(\frac{i\xi}{f^{\prime}(\bar{u}_{-})}+\frac{\xi ^2}{(f^{\prime}(\bar{u}_{-}))^3}\right)y}d\xi\\
& &+\frac{1}{2\pi i}\left(\int_{-\eta_{1}-i\frac{r}{2}}^{-i\frac{r}{2}}+\int_{i\frac{r}{2}}^{-\eta_{1}+i\frac{r}{2}}\right)
   \lambda^{-1}e^{\lambda t}e^{\left(\frac{\lambda}{f^{\prime}(\bar{u}_{-})}-\frac{\lambda^2}
   {(f^{\prime}(\bar{u}_{-}))^3}\right)y}d\lambda\\
& &+\frac{1}{2}\mathrm{Res}_{\lambda=0}\lambda^{-1}e^{\lambda t}
   e^{\left(\frac{\lambda}{f^{\prime}(\bar{u}_{-})}-\frac{\lambda ^2}{(f^{\prime}(\bar{u}_{-}))^3}\right)y}\\
&=&\left(\frac{1}{2\pi}\mathrm{P.V.}\int_{-\infty}^{+\infty}(i\xi)^{-1}e^{i\xi\left(t+\frac{y}{f^{\prime}(\bar{u}_{-})}\right)}
   e^{\xi^2\frac{y}{(f^{\prime}(\bar{u}_{-}))^3}}d\xi+\frac{1}{2}\right)\\
& &-\frac{1}{2\pi}\left(\int_{-\infty}^{-\frac{r}{2}}+\int_{\frac{r}{2}}^{+\infty}\right)
   (i\xi)^{-1}e^{i\xi\left(t+\frac{y}{f^{\prime}(\bar{u}_{-})}\right)}e^{\xi^2\frac{y}{(f^{\prime}(\bar{u}_{-}))^3}}d\xi\\
& &+\frac{1}{2\pi i}\left(\int_{-\eta_{1}-i\frac{r}{2}}^{-i\frac{r}{2}}+\int_{i\frac{r}{2}}^{-\eta_{1}+i\frac{r}{2}}\right)
   \lambda^{-1}e^{\lambda t}e^{\left(\frac{\lambda}{f^{\prime}(\bar{u}_{-})}-\frac{\lambda^2}
   {(f^{\prime}(\bar{u}_{-}))^3}\right)y}d\lambda
\end{eqnarray*}
The first term in the above equality may be explicitly evaluated to give
\begin{equation} \label{8.24}
\mathrm{errfn}\left(\frac{y+f^{\prime}(\bar{u}_{-})t}{\sqrt{4\left|\frac{y}{f^{\prime}(\bar{u}_{-})}\right|}}\right),
\end{equation}
where
$
\mathrm{errfn}(z):=\frac{1}{\sqrt{\pi}}\int_{-\infty}^{z}e^{-y^2}dy,
$
whereas the second and third terms are clearly time-exponentially small for $t \leq C|y|$ and $\eta_{1}$ sufficiently small relative to $r$. In the trivial case $t \geq C|y|, C>0$ sufficiently large, we can simply move the contour to
$[-\eta_{1}-i\frac{r}{2},-\eta_{1}+i\frac{r}{2}]$ to obtain a complete residue of $1$ plus a time-exponentially small error corresponding to the shifted contour integral, which result again agrees with \eqref{8.24} up to a time-exponentially small error.

Expression \eqref{8.24} may be rewritten as
\begin{equation} \label{8.26}
\mathrm{errfn}\left(\frac{y+f^{\prime}(\bar{u}_{-})t}{\sqrt{4t}}\right),
\end{equation}
plus error
\begin{equation} \label{8.27}
\begin{aligned}
& \mathrm{errfn}\left(\frac{y+f^{\prime}(\bar{u}_{-})t}{\sqrt{4\left|\frac{y}{f^{\prime}(\bar{u}_{-})}\right|}}\right)
   -\mathrm{errfn}\left(\frac{y+f^{\prime}(\bar{u}_{-})t}{\sqrt{4t}}\right)\\
&=\mathrm{errfn}^{\prime}\left(\frac{y+f^{\prime}(\bar{u}_{-})t}{\sqrt{4t}}\right)
   \left(-2(y+f^{\prime}(\bar{u}_{-})t)^{2}(4t)^{-\frac{3}{2}} \right)
=\mathbf{O}(t^{-1}e^{\frac{(y+f^{\prime}(\bar{u}_{-})t)^2}{Mt}}),
\end{aligned}
\end{equation}
for $M>0$ sufficiently large, and similarly for $x$- and $y$-derivatives. Multiplying by
$C_1\bar{u}^{\prime}(x)=\mathbf{O}\left(e^{-\alpha |x|}\right)$
we find that term \eqref{8.26} gives contribution
\begin{equation} \label{8.28}
C_1\bar{u}^{\prime}(x)\mathrm{errfn}\left(\frac{y+f^{\prime}(\bar{u}_{-})t}{\sqrt{4t}}\right)
\end{equation}
whereas term \eqref{8.27} gives a contribution absorbable in $R$.

Finally, observing that
\begin{equation}
C_1\bar{u}^{\prime}(x)\mathrm{errfn}\left(\frac{y-f^{\prime}(\bar{u}_{-})t}{\sqrt{4t}}\right)
\end{equation}
is time-exponentially small for $t \geq 1$, since $f^{\prime}(\bar{u}_{-})>0, y<0$, and
$\left|\bar{u}^{\prime}(x)\right|\leq Ce^{-\alpha |x|}$, we may subtract and add this term to \eqref{8.28} to obtain a total of $$E(x,t;y)=C\bar{u}^{\prime}(x)\left(\mathrm{errfn}\left(\frac{y+f^{\prime}(\bar{u}_{-})t}{\sqrt{4t}}\right)
 -\mathrm{errfn}\left(\frac{y-f^{\prime}(\bar{u}_{-})t}{\sqrt{4t}}\right)\right),$$ plus terms absorbable in $R$.

\textit{The $S_{\lambda}$ term.} Next, we consider the second-order term
$
\frac{1}{2\pi i}\int_{\tilde{\Gamma}}e^{\lambda t}S_{\lambda}(x,y)d\lambda
$
which by \eqref{6.42}, is given by
$C_2\Xi^{\prime}(x,t;y)$
where
$
\Xi^{\prime}:=\frac{1}{2\pi i}
\int_{\tilde{\Gamma}}e^{\lambda t}
e^{\left(-\frac{\lambda}{f^{\prime}(\bar{u}_-)}+\frac{\lambda^2}{(f^{\prime}(\bar{u}_-))^3}\right)(x-y)}d\lambda.
$
Similarly as in the treatment of the $E_{\lambda}$ term, just above, by deforming the contour $\tilde{\Gamma}$ to
$
\Gamma^{\prime\prime}:=[-\eta_{1}-i\frac{r}{2},-i\frac{r}{2}]
\cup[-i\frac{r}{2},+i\frac{r}{2}]\cup[+i\frac{r}{2},-\eta_{1}+i\frac{r}{2}],
$
these may be transformed, neglecting time-exponentially decaying terms, to the elementary Fourier integrals
$$
\frac{1}{2\pi}\mathrm{P.V.}\int_{-\infty}^{+\infty}e^{i\xi\left(t-\frac{x-y}{f^{\prime}(\bar{u}_{-})}\right)}
e^{\xi^{2}\left(-\frac{1}{\left(f^{\prime}(\bar{u}_{-})\right)^3}\right)(x-y)}d\xi
=(4\pi t)^{-\frac{1}{2}}e^{-\frac{(x-y-f^{\prime}(\bar{u}_{-})t)^2}{4t}}.
$$
Noting that for $t \geq 1, y \leq x \leq 0$, there is
$$
\left|(4\pi t)^{-\frac{1}{2}}e^{-\frac{(x-y-f^{\prime}(\bar{u}_{-})t)^2}{4t}}\left(1-\frac{e^{-x}}{e^x+e^{-x}}\right)\right|
\leq (4\pi t)^{-\frac{1}{2}}e^{-\frac{(x-y-f^{\prime}(\bar{u}_{-})t)^2}{4t}}e^{-\alpha |x|}
$$
for some $\alpha>0$, so is absorbable in error term $R$, we find that the total contribution of this term, neglecting terms absorbable in $R$, is
$$
S(x,t;y)=\chi_{\{t \geq 1\}}(4\pi t)^{-\frac{1}{2}}e^{-\frac{\left(x-y-f^{\prime}(\bar{u}_{-})t\right)^2}{4t}}\left(\frac{e^{-x}}{e^x+e^{-x}}\right).
$$

\textit{The $R_{\lambda}$ term.} Finally, we briefly discuss the estimation of error term
$\frac{1}{2\pi i}\int_{\tilde{\Gamma}}e^{\lambda t}R_{\lambda}(x,y)d\lambda$. We can decompose the above integral into sum of integrals involving various terms of $R_{\lambda}^{E}$ and $R_{\lambda}^{S}$ given in \eqref{6.43} and \eqref{6.44}. By expanding the term $\mathbf{O}\left(e^{\mathbf{O}(\lambda^3)(x-y)}-1\right)$, we get contour integrals of the form
\begin{equation} \label{8.35}
\frac{1}{2\pi i}\int_{\tilde{\Gamma}}e^{\lambda t}\lambda^q
e^{\left(-\frac{\lambda}{f^{\prime}(\bar{u}_-)}+\frac{\lambda^2}{(f^{\prime}(\bar{u}_-))^3}\right)(x-y)}d\lambda
\end{equation}
It may be deformed to contour
$
\Gamma^{\prime\prime\prime}:=[-\eta_1-i\frac{r}{2},\eta_{\ast}-i\frac{r}{2}]\cup[\eta_{\ast}-i\frac{r}{2},\eta_{\ast}+i\frac{r}{2}]
\cup[\eta_{\ast}+i\frac{r}{2},-\eta_1+i\frac{r}{2}],
$
where the saddle-point $\eta_{\ast}$ is defined as
\begin{equation*}
\eta_{\ast}(x,y,t)
:=\left\{
 \begin{array}{l l}
 \frac{\bar{\alpha}}{p},& \quad \mathrm{if} \quad \left|\frac{\bar{\alpha}}{p}\right|\leq \varepsilon;\\
  & \\
 \pm \varepsilon,& \quad \mathrm{if} \quad \frac{\bar{\alpha}}{p}\gtrless\pm\varepsilon,
 \end{array}
 \right. \\
\end{equation*}
with
$\bar{\alpha}:=\frac{x-y-f^{\prime}(\bar{u}_-)t}{2t}$, $p:=\frac{x-y}{(f^{\prime}(\bar{u}_-))^2t}>0$,
so the integral \eqref{8.35} may be rewritten as
\begin{equation*}
\frac{1}{2\pi i}\left(\int_{-\eta_1-i\frac{r}{2}}^{\eta_{\ast}-i\frac{r}{2}}
+\int_{\eta_{\ast}-i\frac{r}{2}}^{\eta_{\ast}+i\frac{r}{2}}
+\int_{\eta_{\ast}+i\frac{r}{2}}^{-\eta_1+i\frac{r}{2}}\right)e^{\lambda t}\lambda^q
e^{\left(-\frac{\lambda}{f^{\prime}(\bar{u}_-)}+\frac{\lambda^2}{(f^{\prime}(\bar{u}_-))^3}\right)(x-y)}d\lambda.
\end{equation*}
With a bit of computation we can show that the main contribution lies along the central vertical portion
$[\eta_{\ast}-i\frac{r}{2},\eta_{\ast}+i\frac{r}{2}]$ of the contour $\Gamma^{\prime\prime\prime}$:
\begin{equation} \label{8.39}
\frac{1}{2\pi i}\int_{\eta_{\ast}-i\frac{r}{2}}^{\eta_{\ast}+i\frac{r}{2}}e^{\lambda t}\lambda^q
   e^{\left(-\frac{\lambda}{f^{\prime}(\bar{u}_-)}+\frac{\lambda^2}{(f^{\prime}(\bar{u}_-))^3}\right)(x-y)}d\lambda.
\end{equation}
Now we estimate \eqref{8.39}, set $\lambda=\eta_{\ast}+i\xi$ where $-\frac{r}{2}\leq\xi\leq\frac{r}{2}$. Because
\begin{eqnarray*}
& &\mathrm{Re}\left(\lambda t
   +\left(-\frac{\lambda}{f^{\prime}(\bar{u}_-)}+\frac{\lambda^2}{(f^{\prime}(\bar{u}_-))^3}\right)(x-y)\right)\\
&=&\mathrm{Re}\left(-\lambda\frac{2t}{f^{\prime}(\bar{u}_-)}
   \left(\frac{x-y-f^{\prime}(\bar{u}_-)t}{2t}\right)
   +\frac{\lambda^2 t}{f^{\prime}(\bar{u}_-)}\frac{(x-y)}{(f^{\prime}(\bar{u}_-))^2t}\right)\\
&=&\mathrm{Re}\left(-\frac{2\lambda t}{f^{\prime}(\bar{u}_-)}\bar{\alpha}+\frac{\lambda^2t}{f^{\prime}(\bar{u}_-)}p\right)
 = \mathrm{Re}\left(-\frac{t}{f^{\prime}(\bar{u}_-)}\left(2\bar{\alpha}\lambda-p\lambda^2\right)\right)\\
&=&-\frac{t}{f^{\prime}(\bar{u}_-)}\left(2\bar{\alpha}\mathrm{Re}(\lambda)-p\mathrm{Re}(\lambda^2)\right)
 = -\frac{t}{f^{\prime}(\bar{u}_-)}\left(2\bar{\alpha}\mathrm{Re}(\eta_{\ast}+i\xi)-p\mathrm{Re}(\eta_{\ast}+i\xi)^2\right)\\
&=&-\frac{t}{f^{\prime}(\bar{u}_-)}\left(2\bar{\alpha}\eta_{\ast}-p\eta_{\ast}^2+p\xi^2\right)
 = -\frac{t}{f^{\prime}(\bar{u}_-)}\frac{\bar{\alpha}^{2}}{p}-\frac{tp}{f^{\prime}(\bar{u}_-)}\xi^2
\end{eqnarray*}
and $|\lambda|^q =|\eta_{\ast}+i\xi|^q \leq \mathbf{O}(|\eta_{\ast}|^q + |\xi|^q)$, we have
\begin{eqnarray*}
&    &\left|\frac{1}{2\pi i}\int_{\eta_{\ast}-i\frac{r}{2}}^{\eta_{\ast}+i\frac{r}{2}}e^{\lambda t}\lambda^q
   e^{\left(-\frac{\lambda}{f^{\prime}(\bar{u}_-)}+\frac{\lambda^2}{(f^{\prime}(\bar{u}_-))^3}\right)(x-y)}d\lambda\right|\\
&\leq&e^{-\frac{t}{f^{\prime}(\bar{u}_-)}\frac{\bar{\alpha}^{2}}{p}}\int_{-\frac{r}{2}}^{\frac{r}{2}}
      \mathbf{O}(|\eta_{\ast}|^q + |\xi|^q)e^{-\frac{tp}{f^{\prime}(\bar{u}_-)}\xi^2}d\xi\\
&\leq&e^{-\frac{(f^{\prime}(\bar{u}_-))^2}{x-y}\frac{(x-y-f^{\prime}(\bar{u}_-)t)^2}{4t}}\int_{-\infty}^{\infty}
      \mathbf{O}(|\eta_{\ast}|^q + |\xi|^q)e^{-\frac{p}{f^{\prime}(\bar{u}_-)}\xi^2t}d\xi\\
&\leq&\mathbf{O}\left(t^{-\frac{q+1}{2}}e^{-\frac{(x-y-f^{\prime}(\bar{u}_-)t)^2}{Mt}}\right)
\end{eqnarray*}
if $\left|\frac{\bar{\alpha}}{p}\right|\leq \varepsilon$, and
\begin{eqnarray*}
     \left|\frac{1}{2\pi i}\int_{\eta_{\ast}-i\frac{r}{2}}^{\eta_{\ast}+i\frac{r}{2}}e^{\lambda t}\lambda^q
     e^{\left(-\frac{\lambda}{f^{\prime}(\bar{u}_-)}+\frac{\lambda^2}{(f^{\prime}(\bar{u}_-))^3}\right)(x-y)}d\lambda\right|
&\leq& e^{-\frac{\varepsilon t}{M}}\int_{-\infty}^{\infty}
     \mathbf{O}(|\eta_{\ast}|^q + |\xi|^q)e^{-\frac{p}{f^{\prime}(\bar{u}_-)}\xi^2t}d\xi\\
&\leq& \mathbf{O}\left(t^{-\frac{q+1}{2}}e^{-\eta t}\right)
\end{eqnarray*}
if $\left|\frac{\bar{\alpha}}{p}\right|\geq \varepsilon$. Combining these estimates, we get the bound in \eqref{8.6}.
\end{proof}

\begin{rem}
{\rm The derivation of \eqref{8.24}. \\
To evaluate the integral,
$
\frac{1}{2\pi}\mathrm{P.V.}\int_{-\infty}^{+\infty}(i\xi)^{-1}e^{i\xi\left(t+\frac{y}{f^{\prime}(\bar{u}_{-})}\right)}
   e^{\xi^2\frac{y}{(f^{\prime}(\bar{u}_{-}))^3}}d\xi,
$
we make a change of variable $\zeta=\sqrt{\frac{-y}{(f^{\prime}(\bar{u}_{-}))^3}}\xi$, then
$\frac{y}{(f^{\prime}(\bar{u}_{-}))^3}\xi^2=-\zeta^2$ and
$d\xi=\frac{1}{\sqrt{\frac{-y}{(f^{\prime}(\bar{u}_{-}))^3}}}d\zeta$.
The integral becomes
\begin{align*}
& \frac{1}{2\pi}\int_{-\infty}^{+\infty}\frac{\sqrt{\frac{-y}{(f^{\prime}(\bar{u}_{-}))^3}}}{i\zeta}
   e^{i\left(t+\frac{y}{f^{\prime}(\bar{u}_{-})}\right)\frac{1}{\sqrt{\frac{-y}{(f^{\prime}(\bar{u}_{-}))^3}}}\zeta}
   e^{-\zeta^2}\frac{1}{\sqrt{\frac{-y}{(f^{\prime}(\bar{u}_{-}))^3}}}d\zeta\\
&=\frac{1}{2\pi}\int_{-\infty}^{+\infty}\frac{1}{i\zeta}
   e^{i\left(\frac{y+f^{\prime}(\bar{u}_{-})t}{\sqrt{-\frac{y}{f^{\prime}(\bar{u}_{-})}}}\right)\zeta}e^{-\zeta^2}d\zeta
 =\frac{1}{\sqrt{2\pi}}\frac{1}{\sqrt{2\pi}}\int_{-\infty}^{+\infty}\frac{e^{-\zeta^2}}{i\zeta}
   e^{i\left(\frac{y+f^{\prime}(\bar{u}_{-})t}{\sqrt{-\frac{y}{f^{\prime}(\bar{u}_{-})}}}\right)\zeta}d\zeta\\
&=\frac{1}{\sqrt{2\pi}}\left(\mathcal{F}_{\zeta}^{-1}f(\zeta)\right)
   \left(\frac{y+f^{\prime}(\bar{u}_{-})t}{\sqrt{-\frac{y}{f^{\prime}(\bar{u}_{-})}}}\right)
 =\frac{1}{\sqrt{2\pi}}\left(\mathcal{F}_{\zeta}^{-1}f(\zeta)\right)(\tau):=\frac{1}{\sqrt{2\pi}}g(\tau)
\end{align*}
where $f(\zeta)=\frac{e^{-\zeta^2}}{i\zeta}$,
$\tau=\frac{y+f^{\prime}(\bar{u}_{-})t}{\sqrt{-\frac{y}{f^{\prime}(\bar{u}_{-})}}}$ and
$g(\tau)=\left(\mathcal{F}_{\zeta}^{-1}f(\zeta)\right)(\tau)$.
By the inverse Fourier transform formulae, we have
$f(\zeta)=\left(\mathcal{F}_{\tau}g(\tau)\right)(\zeta)$
and
$i\zeta f(\zeta)=\left(\mathcal{F}_{\tau}g^{\prime}(\tau)\right)(\zeta)=e^{-\zeta^2}$
so
\begin{equation*}
g^{\prime}(\tau)
=\left(\mathcal{F}_{\zeta}^{-1}e^{-\zeta^2}\right)(\tau)
=\frac{1}{\sqrt{2\pi}}\int_{-\infty}^{+\infty}e^{-\zeta^2}e^{i\tau\zeta}d\zeta
=\frac{1}{\sqrt{2}}e^{-\frac{\tau^2}{4}}
=\frac{\sqrt{2\pi}}{2}\mathrm{errfn}^{\prime}\left(\frac{\tau}{2}\right),
\end{equation*}
then
$\frac{1}{\sqrt{2\pi}}g^{\prime}(\tau)=\frac{1}{2}\mathrm{errfn}^{\prime}\left(\frac{\tau}{2}\right)$,
integrate this equation to get
$\frac{1}{\sqrt{2\pi}}g(\tau)=\mathrm{errfn}\left(\frac{\tau}{2}\right)-\frac{1}{2}$
because $g(0)=0$ and $\mathrm{errfn}(0)=\frac{1}{2}$. This completes the proof of \eqref{8.24}.
}
\end{rem}

\begin{rem}
{\rm The reason that we made the excited term $E(x,t;y)$ look like \eqref{8.4} is that we would like to have the Green function decompositon look similar to the scalar Burger's equation case in \cite{Z1}, doing so also makes $E(x,t;y)$ vanishes at $t=0$.}
\end{rem}

\begin{rem}
{\rm The $\frac{e^{-x}}{e^x+e^{-x}}$ term in the scattering term serves as a smooth cutoff function, which smoothly interpolate between different cases of solutions. For $x>0$ and $|x|$ large, $\frac{e^{-x}}{e^x+e^{-x}}$ decays to $0$, for $x<0$ and $|x|$ large, $\frac{e^{-x}}{e^x+e^{-x}}$ is almost $1$.}
\end{rem}

Now it is time to give some $L^p$ estimates on Green function convolved with some function $f$ in $L^{p}(\mathbb{R})$
($1\leq p\leq \infty$).

\begin{pro} \label{Pro8.7}
The Green function $G$ decomposes as $G=E+S+R=E+\tilde{G}$, where $\tilde{G}=S+R$,
$E(x,t;y)=C\bar{u}^{\prime}(x)e(y,t)$ and
$$e(y,t):=\mathrm{errfn}\left(\frac{y+f^{\prime}(\bar{u}_{-})t}{\sqrt{4t}}\right)
 -\mathrm{errfn}\left(\frac{y-f^{\prime}(\bar{u}_{-})t}{\sqrt{4t}}\right)$$
then for some $C>0$ and all $t>0$,
\begin{equation} \label{8.41}
\left|\int_{-\infty}^{+\infty}\tilde{G}(x,t;y)h(y)dy\right|_{L^{p}(x)}\leq Ct^{-\frac{1}{2}(1-\frac{1}{p})}|h|_{L^1},
\end{equation}
\begin{equation} \label{8.42}
\left|\int_{-\infty}^{+\infty}\tilde{G}_{y}(x,t;y)h(y)dy\right|_{L^{p}(x)}\leq Ct^{-\frac{1}{2}(1-\frac{1}{p})-\frac{1}{2}}|h|_{L^1},
\end{equation}
\begin{equation} \label{8.43}
\left|\int_{-\infty}^{+\infty}\tilde{G}(x,t;y)h(y)dy\right|_{L^{p}(x)}\leq C|h|_{L^p},
\end{equation}
\begin{equation} \label{8.44}
\left|\int_{-\infty}^{+\infty}\tilde{G}_{y}(x,t;y)h(y)dy\right|_{L^{p}(x)}\leq Ct^{-\frac{1}{2}}|h|_{L^p}.
\end{equation}
and
\begin{equation} \label{8.45}
\left|\int_{-\infty}^{+\infty}e(y,t)h(y)dy\right|\leq C|h|_{L^1},
\left|\int_{-\infty}^{+\infty}e_{y}(y,t)h(y)dy\right|\leq Ct^{-\frac{1}{2}}|h|_{L^1},
\end{equation}
\begin{equation} \label{8.46}
\left|\int_{-\infty}^{+\infty}e_{t}(y,t)h(y)dy\right|\leq Ct^{-\frac{1}{2}}|h|_{L^1},
\left|\int_{-\infty}^{+\infty}e_{ty}(y,t)h(y)dy\right|\leq Ct^{-1}|h|_{L^1},
\end{equation}
\begin{equation} \label{8.47}
\left|\int_{-\infty}^{+\infty}e_{t}(y,t)h(y)dy\right|\leq C|h|_{L^{\infty}},
\left|\int_{-\infty}^{+\infty}e_{yt}(y,t)h(y)dy\right|\leq Ct^{-\frac{1}{2}}|h|_{L^{\infty}}.
\end{equation}
\end{pro}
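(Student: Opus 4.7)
My plan is to reduce every estimate to a standard application of Young's (or Hölder's) inequality applied to the explicit pointwise bounds already given in Theorem~\ref{greenfunctionbounds}, together with a direct analysis of the error-function profile $e(y,t)$. The key scaling identity I will use repeatedly is that for a Gaussian kernel $K_t(z)=t^{-1/2}e^{-(z-f'(\bar u_-)t)^2/(Mt)}$ one has $\|K_t\|_{L^q_x}\leq Ct^{-\frac{1}{2}(1-1/q)}$; in particular $\|K_t\|_{L^1_x}\leq C$ uniformly, while $\|K_t\|_{L^p_x}$ scales as $t^{-\frac{1}{2}(1-1/p)}$.

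For \eqref{8.41}--\eqref{8.44}, write $\tilde G=S+R$ and observe from \eqref{8.5}, \eqref{8.6} that each summand factors as $\phi(x)K_t(x-y)$, where $\phi$ is one of the three bounded multipliers: the logistic cutoff $e^{\mp x}/(e^x+e^{-x})$ (for $S$), an $e^{-\eta t}$ factor absorbing everything (for the first summand of $R$), or $(t+1)^{-1/2}e^{-\eta x^+}+e^{-\eta|x|}$ (for the second summand of $R$). Crucially, each $\phi$ is simultaneously uniformly bounded in $L^\infty_x$ and in $L^1_x$ by a constant independent of $t$. For the $L^1\to L^p$ bound \eqref{8.41}, bound $\phi$ in $L^\infty_x$ and apply Young's inequality to the kernel in $L^p_x$:
\[
\Big\|\int\tilde G(x,t;y)h(y)\,dy\Big\|_{L^p_x}\leq\|\phi\|_{L^\infty}\|K_t\|_{L^p}\|h\|_{L^1}\leq Ct^{-\frac{1}{2}(1-1/p)}\|h\|_{L^1}.
\]
For the $L^p\to L^p$ bound \eqref{8.43}, use the $L^1$ norm of the kernel instead. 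The $y$-derivative bounds \eqref{8.42} and \eqref{8.44} follow in exactly the same way, the only change being that differentiating $K_t$ in $y$ produces an extra factor of $t^{-1/2}$, which flows through. The analogous formulas for the remaining cases $x\leq y$ of Theorem~\ref{greenfunctionbounds} are handled identically.

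For \eqref{8.45}--\eqref{8.47}, I work directly with $e(y,t)=\mathrm{errfn}\bigl(\tfrac{y+at}{\sqrt{4t}}\bigr)-\mathrm{errfn}\bigl(\tfrac{y-at}{\sqrt{4t}}\bigr)$ with $a:=f'(\bar u_-)>0$. Since $\mathrm{errfn}\in[0,1]$, $|e|_{L^\infty_y}\leq 2$, giving the first bound of \eqref{8.45}. Direct differentiation yields $e_y=(4\pi t)^{-1/2}\bigl[e^{-(y+at)^2/(4t)}-e^{-(y-at)^2/(4t)}\bigr]$, so $\|e_y\|_{L^\infty_y}\leq Ct^{-1/2}$ and $\|e_y\|_{L^1_y}\leq C$, giving the second bound of \eqref{8.45}. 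The $ty$-bounds follow by one more differentiation and the standard estimate $|w|e^{-w^2/(4t)}\leq C\sqrt{t}$, yielding $\|e_{ty}\|_{L^\infty_y}\leq Ct^{-1}$ and $\|e_{ty}\|_{L^1_y}\leq Ct^{-1/2}$ via rescaling $w=2\sqrt{t}u$. The $L^\infty_y$ bound against $\|h\|_{L^1}$ then gives the right-hand members of \eqref{8.46} and \eqref{8.47}.

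The subtlest point is the bound $\|e_t\|_{L^\infty_y}\leq Ct^{-1/2}$, because differentiating each errfn separately produces an $O(t^{-1})$ singularity at $y\approx\pm\sqrt{t}$, and the desired $t^{-1/2}$ scaling only emerges from cancellation between the two terms. I will expose this cancellation using the representation $e(y,t)=\int_{-at}^{at}K(y+r,t)\,dr$ with $K$ the heat kernel; then $K_t=K_{yy}$ and Leibniz give
\[
e_t(y,t)=K_y(y+at,t)-K_y(y-at,t)+a\bigl(K(y+at,t)+K(y-at,t)\bigr).
\]
The second group is $O(t^{-1/2})$ in $L^\infty_y$ by direct inspection. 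For the first group, I split by scale: when $at\leq\sqrt t$, the mean value theorem gives $|K_y(y+at)-K_y(y-at)|\leq 2at\,\|K_{yy}\|_{L^\infty}=O(at\cdot t^{-3/2})=O(t^{-1/2})$; when $at\geq\sqrt t$, the two $K_y$ terms live on essentially disjoint spatial scales, each of size $O(t^{-1})=O(a\cdot t^{-1/2})$, which is again $O(t^{-1/2})$. The same splitting gives $\|e_t\|_{L^1_y}\leq C$, completing \eqref{8.46}--\eqref{8.47}. Exposing this cancellation is the main obstacle; the rest of the proof is bookkeeping on top of Theorem~\ref{greenfunctionbounds}.
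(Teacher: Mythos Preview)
Your approach is correct and essentially the same as the paper's: both reduce \eqref{8.41}--\eqref{8.44} to Young's inequality applied to the Gaussian-type pointwise bounds of Theorem~\ref{greenfunctionbounds}, and both handle \eqref{8.45}--\eqref{8.47} by direct differentiation of $e(y,t)$ together with a mean-value argument to extract the $t^{-1/2}$ rate for $e_t$ at small times (the paper splits at $t=1$, you split at $at=\sqrt t$; your integral representation $e=\int_{-at}^{at}K(y+r,t)\,dr$ is a clean way to expose the same cancellation). One small correction: your claim that each multiplier $\phi$ lies in $L^1_x$ is false---the logistic cutoff $e^{-x}/(e^x+e^{-x})$ and the factor $(t+1)^{-1/2}e^{-\eta x^+}$ are only bounded, not integrable---but since your displayed estimates only ever use $\|\phi\|_{L^\infty}$ (with the kernel supplying the $L^p$ or $L^1$ norm), this does not affect the argument and you should simply drop the $L^1$ assertion.
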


\begin{proof}
We prove \eqref{8.41} first. Write $\tilde{G}$ as $\tilde{G}(x,t;y)=S(x,t;y)+R(x,t;y)$,
recall from Theorem \ref{greenfunctionbounds} that
$S(x,t;y)=\chi_{\{t \geq 1\}}(4\pi t)^{-\frac{1}{2}}e^{-\frac{\left(x-y-f^{\prime}(\bar{u}_{-})t\right)^2}{4t}}\left(\frac{e^{-x}}{e^x+e^{-x}}\right)$
so
\begin{equation} \label{8.48}
\left|S(x,t;y)\right|\leq (4\pi t)^{-\frac{1}{2}}e^{-\frac{\left(x-y-f^{\prime}(\bar{u}_{-})t\right)^2}{4t}}
\end{equation}
and
$R(x,t;y)=\mathbf{O}\left(e^{-\eta t}e^{-\frac{|x-y|^2}{Mt}}\right)
 +\mathbf{O}\left((t+1)^{-\frac{1}{2}}e^{-\eta x^+}+e^{-\eta |x|}\right)t^{-\frac{1}{2}}e^{-\frac{(x-y-f^{\prime}(\bar{u}_{-})t)^2}{Mt}}$
so
\begin{equation} \label{8.49}
\left|R(x,t;y)\right|\leq Ce^{-\eta |x|}t^{-\frac{1}{2}}e^{-\frac{(x-y-f^{\prime}(\bar{u}_{-})t)^2}{Mt}}
+Ce^{-\eta^{\prime}(|x-y|+t)}
\end{equation}
for some $0<\eta^{\prime}<\eta$.
By Minkowski's inequality,
$$\left|\tilde{G}(x,t;y)\right|_{L^{p}(x)}\leq \left|S(x,t;y)\right|_{L^{p}(x)}+\left|R(x,t;y)\right|_{L^{p}(x)}$$
We estimate $\left|S(x,t;y)\right|_{L^{p}(x)}$ first,
\begin{eqnarray*}
\left|S(x,t;y)\right|_{L^{p}(x)}
&\leq&\left(\int_{-\infty}^{+\infty}\left[(4\pi t)^{-\frac{1}{2}}e^{-\frac{\left(x-y-f^{\prime}(\bar{u}_{-})t\right)^2}{4t}}\right]^{p}dx\right)^{\frac{1}{p}}\\
&\leq&(4\pi t)^{-\frac{1}{2}}\left(\int_{-\infty}^{+\infty}
e^{-\frac{p}{4t}\left(x-y-f^{\prime}(\bar{u}_{-})t\right)^2}dx\right)^{\frac{1}{p}}\\
&=&(4\pi t)^{-\frac{1}{2}}\left(\sqrt{\frac{4t}{p}}\int_{-\infty}^{+\infty}e^{-z^2}dz\right)^{\frac{1}{p}}\\
&=&(4\pi t)^{-\frac{1}{2}}\left(\sqrt{\frac{4t}{p}}\sqrt{\pi}\right)^{\frac{1}{p}}
=C_1 t^{-\frac{1}{2}(1-\frac{1}{p})}
\end{eqnarray*}
Then we estimate $\left|R(x,t;y)\right|_{L^{p}(x)}$,
\begin{eqnarray*}
\left|R(x,t;y)\right|_{L^{p}(x)}
\leq\left|Ce^{-\eta |x|}t^{-\frac{1}{2}}e^{-\frac{(x-y-f^{\prime}(\bar{u}_{-})t)^2}{Mt}}\right|_{L^{p}(x)}
+\left|Ce^{-\eta^{\prime}(|x-y|+t)}\right|_{L^{p}(x)}
:=R_A+R_B.
\end{eqnarray*}
In fact $R_A$ is similar to $\left|S(x,t;y)\right|_{L^{p}(x)}$, so we have
$R_A\leq C_2 t^{-\frac{1}{2}(1-\frac{1}{p})}$. Estimate $R_B$ as,
\begin{eqnarray*}
R_B^p
&=&C^p\int_{-\infty}^{+\infty}\left(e^{-\eta^{\prime}(|x-y|+t)}\right)^p dx
=C^p\int_{-\infty}^{+\infty}e^{-p\eta^{\prime}(|x-y|+t)} dx\\
&=&C^p\int_{-\infty}^{y}e^{-p\eta^{\prime}(y-x+t)} dx+C^p\int_{y}^{+\infty}e^{-p\eta^{\prime}(x-y+t)} dx\\
&=&C^p e^{-p\eta^{\prime}t}\left(\int_{-\infty}^{y}e^{-p\eta^{\prime}(y-x)} dx
+\int_{y}^{+\infty}e^{-p\eta^{\prime}(x-y)} dx\right)\\
&=&2C^p e^{-p\eta^{\prime}t}\int_{0}^{+\infty}e^{-p\eta^{\prime}x} dx
=\frac{2C^p e^{-p\eta^{\prime}t}}{p\eta^{\prime}}
\end{eqnarray*}
so
$R_B=C\left(\frac{2}{p\eta^{\prime}}\right)^{\frac{1}{p}}e^{-\eta^{\prime}t}$.

Finally, we use the above estimates to derive,
\begin{eqnarray*}
&    &\left|\int_{-\infty}^{+\infty}\tilde{G}(x,t;y)h(y)dy\right|_{L^{p}(x)}\\
&\leq&\int_{-\infty}^{+\infty}\left|\tilde{G}(x,t;y)\right|_{L^{p}(x)}\left|h(y)\right|dy\\
&\leq&\int_{-\infty}^{+\infty}\left(\left|S(x,t;y)\right|_{L^{p}(x)}+\left|R(x,t;y)\right|_{L^{p}(x)}\right)\left|h(y)\right|dy\\
&\leq&\int_{-\infty}^{+\infty}\left(C_1t^{-\frac{1}{2}(1-\frac{1}{p})}+C_2t^{-\frac{1}{2}(1-\frac{1}{p})}
      +C\left(\frac{2}{p\eta^{\prime}}\right)^{\frac{1}{p}}e^{-\eta^{\prime}t}\right)\left|h(y)\right|dy\\
&\leq&\int_{-\infty}^{+\infty}(C_1+C_2+C_3)t^{-\frac{1}{2}(1-\frac{1}{p})}\left|h(y)\right|dy
 =  Ct^{-\frac{1}{2}(1-\frac{1}{p})}|h|_{L^1}.
\end{eqnarray*}
For $y$-derivative bounds of $\tilde{G}$, $\tilde{G}_{y}(x,t;y)$, we just need to notice that we have the following estimates
\begin{equation} \label{8.50}
\left|S_{y}(x,t;y)\right|\leq C t^{-1}e^{-\frac{\left(x-y-f^{\prime}(\bar{u}_{-})t\right)^2}{4t}}
\end{equation}
and
$R_{y}(x,t;y)
=\mathbf{O}\left(e^{-\eta t}e^{-\frac{|x-y|^2}{Mt}}\right)
+\mathbf{O}\left((t+1)^{-\frac{1}{2}}e^{-\eta x^+}+e^{-\eta |x|}\right)t^{-1}e^{-\frac{(x-y-f^{\prime}(\bar{u}_{-})t)^2}{Mt}}$,
so
\begin{equation} \label{8.51}
\left|R_{y}(x,t;y)\right|\leq Ce^{-\eta |x|}t^{-1}e^{-\frac{(x-y-f^{\prime}(\bar{u}_{-})t)^2}{Mt}}
+Ce^{-\eta^{\prime}(|x-y|+t)}
\end{equation}
for some $0<\eta^{\prime}<\eta$. With some similar computation as the for $\tilde{G}(x,t;y)$ we get
\begin{equation*}
\left|\int_{-\infty}^{+\infty}\tilde{G}_{y}(x,t;y)h(y)dy\right|_{L^{p}(x)}\leq Ct^{-\frac{1}{2}(1-\frac{1}{p})-\frac{1}{2}}|h|_{L^1},
\end{equation*}
To prove \eqref{8.43} and \eqref{8.44}, notice that \eqref{8.48} implies
\begin{equation} \label{8.52}
\left|S(x,t;y)\right|_{L^{1}(x)}\leq \left|(4\pi t)^{-\frac{1}{2}}e^{-\frac{\left(x-y-f^{\prime}(\bar{u}_{-})t\right)^2}{4t}}\right|_{L^{1}(x)}\leq C,
\end{equation}
$(\ref{8.49})$ implies
\begin{equation} \label{8.53}
\left|R(x,t;y)\right|_{L^{1}(x)}\leq C\left|e^{-\eta |x|}t^{-\frac{1}{2}}e^{-\frac{(x-y-f^{\prime}(\bar{u}_{-})t)^2}{Mt}}\right|_{L^{1}(x)}
+C\left|e^{-\eta^{\prime}(|x-y|+t)}\right|_{L^{1}(x)}\leq C,
\end{equation}
\eqref{8.50} implies
\begin{equation} \label{8.54}
\left|S_{y}(x,t;y)\right|_{L^{1}(x)}\leq C \left|t^{-1}e^{-\frac{\left(x-y-f^{\prime}(\bar{u}_{-})t\right)^2}{4t}}\right|_{L^{1}(x)}\leq Ct^{-\frac{1}{2}}
\end{equation}
\eqref{8.51} implies
\begin{equation} \label{8.55}
\left|R_{y}(x,t;y)\right|_{L^{1}(x)}\leq C\left|e^{-\eta |x|}t^{-1}e^{-\frac{(x-y-f^{\prime}(\bar{u}_{-})t)^2}{Mt}}\right|_{L^{1}(x)}
+C\left|e^{-\eta^{\prime}(|x-y|+t)}\right|_{L^{1}(x)}\leq Ct^{-\frac{1}{2}}.
\end{equation}
Use estimates \eqref{8.52}-\eqref{8.55} and the inequality $|f\ast g|_{L^{p}}\leq |f|_{L^{1}}|g|_{L^{p}}$ we can derive the estimates \eqref{8.43} and \eqref{8.44}.
To prove bounds \eqref{8.45}-\eqref{8.47} we need the following lemma.
\end{proof}

\begin{lem} \label{Lem8.8}
For some $C>0$ and all $t>0$,
 \begin{eqnarray}
 |e(\cdot,t)|_{L^{\infty}}&\leq& C,\label{8.56}\\
 |e_y(\cdot,t)|_{L^{p}}&\leq& Ct^{-\frac{1}{2}(1-\frac{1}{p})},\label{8.57}\\
 |e_t(\cdot,t)|_{L^{p}}&\leq& Ct^{-\frac{1}{2}(1-\frac{1}{p})},\label{8.58}\\
 |e_{ty}(\cdot,t)|_{L^{p}}&\leq& Ct^{-\frac{1}{2}(1-\frac{1}{p})-\frac{1}{2}},\label{8.59}\\
 |e_y(y,t)|&\leq& Ct^{-\frac{1}{2}}\left(e^{-\frac{(-y-t)^2}{Ct}}+e^{-\frac{(-y+t)^2}{Ct}}\right),\label{8.60}\\
 |e_t(y,t)|&\leq& Ct^{-\frac{1}{2}}\left(e^{-\frac{(-y-t)^2}{Ct}}+e^{-\frac{(-y+t)^2}{Ct}}\right),\label{8.61}\\
 |e_{ty}(y,t)|&\leq& Ct^{-1}\left(e^{-\frac{(-y-t)^2}{Ct}}+e^{-\frac{(-y+t)^2}{Ct}}\right).\label{8.62}
 \end{eqnarray}
\end{lem}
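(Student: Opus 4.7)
Set $a := f^{\prime}(\bar{u}_-) > 0$ and decompose $e(y,t) = u_1(y,t) - u_2(y,t)$, where $u_j(y,t) := \mathrm{errfn}\bigl((y + (-1)^{j+1}at)/(2\sqrt{t})\bigr)$. The plan is to establish all seven estimates by explicit differentiation followed by elementary Gaussian bounds. Bound \eqref{8.56} is immediate from $0 \le \mathrm{errfn} \le 1$. For \eqref{8.60}, differentiating in $y$ using $\mathrm{errfn}^{\prime}(z) = \pi^{-1/2}e^{-z^2}$ yields
\[
e_y = (4\pi t)^{-1/2}\bigl[e^{-(y+at)^2/(4t)} - e^{-(y-at)^2/(4t)}\bigr],
\]
so the triangle inequality, with $a$ absorbed into the constant $C$ appearing in the Gaussian exponent, gives \eqref{8.60}.

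The main estimate is the pointwise bound \eqref{8.61}. A direct computation yields
\[
e_t = (4\sqrt{\pi}\,t^{3/2})^{-1}\bigl[(at-y)e^{-(y+at)^2/(4t)} + (y+at)e^{-(y-at)^2/(4t)}\bigr].
\]
I plan to rewrite $at - y = 2at - (y+at)$ and $y + at = 2at + (y-at)$ and average the two resulting decompositions to obtain the symmetric form
\[
e_t = \tfrac{a}{4\sqrt{\pi t}}\bigl[e^{-(y+at)^2/(4t)} + e^{-(y-at)^2/(4t)}\bigr] + \tfrac{y}{4\sqrt{\pi}\,t^{3/2}}\bigl[e^{-(y-at)^2/(4t)} - e^{-(y+at)^2/(4t)}\bigr].
\]
The first summand already satisfies \eqref{8.61}. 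For the second, the key tool is the integral identity
\[
e^{-(y-at)^2/(4t)} - e^{-(y+at)^2/(4t)} = \int_{y-at}^{y+at} \tfrac{s}{2t}\,e^{-s^2/(4t)}\,ds,
\]
whose integrand satisfies the elementary Gaussian-weighted bound $\tfrac{|s|}{2t}e^{-s^2/(4t)} \le \tfrac{C}{\sqrt{t}}e^{-s^2/(8t)}$. A case split according to $|y| \le at$ versus $|y| > at$ (using in the second region that $y-at$ and $y+at$ then have the same sign, so $|s|$ is bounded below on the integration interval by $|y|-at$) combines these estimates with the prefactor $|y|/t^{3/2}$ to yield the claimed $Ct^{-1/2}$ bound with Gaussian tails centered at $y = \pm at$, after enlarging $C$ in the Gaussian exponent if necessary.

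Bound \eqref{8.62} is obtained by differentiating the same symmetric formula in $y$ and running the case split one more time; each $y$-derivative supplies an extra factor of $t^{-1/2}$ in the prefactor while preserving the Gaussian-tail structure. Finally, the $L^p$-norm bounds \eqref{8.57}--\eqref{8.59} follow from the pointwise bounds by the standard Gaussian identity $\|e^{-y^2/(Ct)}\|_{L^p(y)} = C^{\prime}t^{1/(2p)}$: a pointwise prefactor of order $t^{-k/2}$ (with $k = 1, 1, 2$ for $e_y$, $e_t$, $e_{ty}$, respectively) integrated against this norm yields $L^p$ size $t^{-k/2 + 1/(2p)} = t^{-(k - 1/p)/2}$, matching each of the stated decay rates.

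The main obstacle will be the sharp prefactor in \eqref{8.61}: each of the two raw summands in the direct formula for $e_t$ individually scales pointwise like $t^{-1}$ near the characteristic lines $y = \pm at$, so the missing half-power of $t$ is invisible term by term. The symmetrization step above is what makes the coefficient of the difference proportional to $y$ rather than to $y \pm at$, and this is precisely what allows the integral identity to supply one factor of $\sqrt{t}$ of cancellation and reduce the estimate to a routine application of the Gaussian-weighted polynomial bound $|s|e^{-s^2/(4t)} \le C\sqrt{t}$.
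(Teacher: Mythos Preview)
Your overall strategy matches the paper's: both compute $e_y$ and $e_t$ explicitly, split $e_t$ into a harmless Gaussian sum plus a ``cancellation'' term, and extract one factor of $\sqrt{t}$ from the latter. Your symmetric decomposition and the integral identity for $e^{-(y-at)^2/(4t)}-e^{-(y+at)^2/(4t)}$ are clean substitutes for the paper's mean-value step.

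There is, however, a real gap in your treatment of \eqref{8.61}: the integral-identity bound degrades for large $t$. Take $y=at+1$ and let $t\to\infty$. On $[y-at,y+at]=[1,2at+1]$ your integrand bound $\tfrac{|s|}{2t}e^{-s^2/(4t)}\le C t^{-1/2}e^{-s^2/(8t)}$ together with $e^{-s^2/(8t)}\le e^{-1/(8t)}\approx 1$ gives only $|Q-P|\lesssim \sqrt{t}$, so the second summand is bounded by $\tfrac{|y|}{t^{3/2}}\cdot O(\sqrt{t})=O(1)$, not $O(t^{-1/2})$; yet the right-hand side of \eqref{8.61} at this point is $\sim t^{-1/2}$. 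The same overshoot occurs in the region $|y|\le at$ for large $t$ (e.g.\ near $y=0$ the Gaussian tails are $e^{-a^2 t/C}$, while your bound gives a nonzero constant). The point is that the factor of $\sqrt{t}$ you gain from the length $2at$ of the integration interval is a genuine gain only when $t$ is small; for large $t$ it is a loss.

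The fix is exactly what the paper does: split $t\ge 1$ versus $t<1$. For $t\ge 1$ no cancellation is needed at all, since each raw summand in the direct formula for $e_t$ is already $O(t^{-1})\le O(t^{-1/2})$ with the correct Gaussian tail (use $\tfrac{|z|}{\sqrt t}e^{-z^2/(8t)}\le C$ on the factors $(y\pm at)/\sqrt{t}$). For $0<t<1$ your integral identity and the $|y|\lessgtr at$ case split work as you describe, because then the extra $at$ in $y=(y-at)+at$ contributes at most a bounded constant, which is dominated by $t^{-1/2}$. With this additional split your argument is complete and essentially equivalent to the paper's.
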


\begin{proof}
Estimates \eqref{8.56} follows directly from the definition of $e(y,t)$:
\begin{eqnarray*}
e(y,t)
&:=&\mathrm{errfn}\left(\frac{y+f^{\prime}(\bar{u}_{-})t}{\sqrt{4t}}\right)
    -\mathrm{errfn}\left(\frac{y-f^{\prime}(\bar{u}_{-})t}{\sqrt{4t}}\right)\\
& =&\frac{1}{\sqrt{\pi}}\int_{-\infty}^{\frac{y+f^{\prime}(\bar{u}_{-})t}{\sqrt{4t}}}e^{-z^2}dz
    -\frac{1}{\sqrt{\pi}}\int_{-\infty}^{\frac{y-f^{\prime}(\bar{u}_{-})t}{\sqrt{4t}}}e^{-z^2}dz
 =\frac{1}{\sqrt{\pi}}\int_{\frac{y-f^{\prime}(\bar{u}_{-})t}{\sqrt{4t}}}
    ^{\frac{y+f^{\prime}(\bar{u}_{-})t}{\sqrt{4t}}}e^{-z^2}dz
\end{eqnarray*}
Differentiating the above equation with respect to $y$ we get,
\begin{eqnarray*}
e_y(y,t)
=\frac{1}{\sqrt{\pi}}\left(\frac{e^{-\frac{(y+f^{\prime}(\bar{u}_{-})t)^2}{4t}}}{\sqrt{4t}}
 -\frac{e^{-\frac{(y-f^{\prime}(\bar{u}_{-})t)^2}{4t}}}{\sqrt{4t}}\right)
\end{eqnarray*}
yielding \eqref{8.60}.

Differentiating with respect to $t$ we get,
\begin{eqnarray*}
e_t(y,t)
&=&f^{\prime}(\bar{u}_{-})\left(\frac{e^{-\frac{(y+f^{\prime}(\bar{u}_{-})t)^2}{4t}}}{\sqrt{4\pi t}}
   +\frac{e^{-\frac{(y-f^{\prime}(\bar{u}_{-})t)^2}{4t}}}{\sqrt{4\pi t}}\right)\\
& &+\frac{1}{2\sqrt{t}}
   \left(\frac{(y-f^{\prime}(\bar{u}_{-})t)}{\sqrt{t}}\frac{e^{-\frac{(y-f^{\prime}(\bar{u}_{-})t)^2}{4t}}}{\sqrt{4\pi t}}
   -\frac{(y+f^{\prime}(\bar{u}_{-})t)}{\sqrt{t}}\frac{e^{-\frac{(y+f^{\prime}(\bar{u}_{-})t)^2}{4t}}}{\sqrt{4\pi t}}\right)
\end{eqnarray*}
then we get \eqref{8.61} immediately if $t\geq 1$. For the case when $0<t<1$, we use the mean value theorem,
\begin{eqnarray*}
& &\left|\frac{(y-f^{\prime}(\bar{u}_{-})t)}{\sqrt{t}}\frac{e^{-\frac{(y-f^{\prime}(\bar{u}_{-})t)^2}{4t}}}{\sqrt{4\pi t}}
   -\frac{(y+f^{\prime}(\bar{u}_{-})t)}{\sqrt{t}}\frac{e^{-\frac{(y+f^{\prime}(\bar{u}_{-})t)^2}{4t}}}{\sqrt{4\pi t}}\right|\\
&=&t\left|\int_{-f^{\prime}(\bar{u}_{-})}^{f^{\prime}(\bar{u}_{-})}
   \frac{\partial}{\partial z}\left(\frac{z}{\sqrt{t}}\frac{e^{-\frac{z^2}{4t}}}{\sqrt{4\pi t}}\right)
   |_{z=-y+\theta t}d\theta\right|
\leq 2C_0 t\left|\frac{\partial}{\partial z}\left(\frac{z}{\sqrt{t}}\frac{e^{-\frac{z^2}{4t}}}{\sqrt{4\pi t}}\right)
      |_{z=-y}\right|\\
&=&2C_0 t\frac{e^{-\frac{y^2}{4t}}|2t-y^2|}{4\sqrt{\pi}t^2}
\leq C\left(e^{-\frac{(-y-t)^2}{Ct}}+e^{-\frac{(-y+t)^2}{Ct}}\right),
\end{eqnarray*}
we get \eqref{8.61} again. Estimate \eqref{8.62} can be derived similarly.\\
Now the estimates \eqref{8.57}-\eqref{8.59} follow as in the heat kernel estimates.
\end{proof}

Once we have the above lemma, \eqref{8.45}-\eqref{8.47} can be derived similarly as previous estimates on $\tilde{G}$.

\section{Nonlinear stability} \label{sectionstability}

In this section we will establish the stability results of the viscous shock solutions of \eqref{0.1}, proving
Theorem \ref{mainthm}. Let $\tilde{u}$ be a second
solution of \eqref{0.1}, define the perturbation
\begin{equation}
u(x,t):=\tilde{u}(x+\alpha(t),t)-\bar{u}(x) \label{per}
\end{equation}
as the difference between a translate of $\tilde{u}$ and the background wave $\bar{u}$, where the translation $\alpha(t)$ is to
be determined later. After a bit of computation, we derive the \textit{perturbation equation}
\begin{equation}
u_t-Lu=N(u)_x+\dot{\alpha}(t)(u_x+\bar{u}_x), \label{pereq}
\end{equation}
where $Lu=u_{xx} - (f^{\prime}(\bar{u})u)_x$ is the linearized differential operator as before, and
$N(u):=f(\bar{u})+f^{\prime}(\bar{u})u-f(u+\bar{u})$.

\begin{rem}
{\rm From direct calculation $N(u)$ ought to be
$N(u):=f^{\prime}(\bar{u})u-f(u+\bar{u})+\bar{u}_x,$
but notice that $\bar{u}_{xx}=f(\bar{u})_{x}$, thus in \eqref{pereq} if we substitute $N(u)$ with the one defined above will
make no difference.}
\end{rem}

Starting from the perturbation equation, we apply Duhamel's principle to \eqref{pereq},
\begin{eqnarray*}
u(x,t)
&=&e^{Lt}u_0+\int_{0}^{t}e^{L(t-s)}\left(N(u)_x+\dot{\alpha}(s)(\bar{u}_x+u_x)\right)ds\\
&=&\int_{-\infty}^{+\infty}G(x,t;y)u_{0}(y)dy\\
& &+\int_{0}^{t}\int_{-\infty}^{+\infty}G(x,t-s;y)\left[(N(u))_x(y,s)+\dot{\alpha}(s)u_x(y,s)\right]dyds\\
& &+\int_{0}^{t}\int_{-\infty}^{+\infty}G(x,t-s;y)\dot{\alpha}(s)\bar{u}_x(y)dyds\\
&=&\int_{-\infty}^{+\infty}G(x,t;y)u_{0}(y)dy\\
& &-\int_{0}^{t}\int_{-\infty}^{+\infty}G_y(x,t-s;y)\left[(N(u))(y,s)+\dot{\alpha}(s)u(y,s)\right]dyds\\
& &+\int_{0}^{t}G(x,t-s;y)\left[(N(u))(y,s)+\dot{\alpha}(s)u(y,s)\right]_{-\infty}^{+\infty}ds\\
& &+\left(\int_{0}^{t}\dot{\alpha}(s)ds\right)\left(\int_{-\infty}^{+\infty}G(x,t-s;y)\bar{u}_x(y)dy\right)\\
&=&\int_{-\infty}^{+\infty}G(x,t;y)u_{0}(y)dy\\
& &-\int_{0}^{t}\int_{-\infty}^{+\infty}G_y(x,t-s;y)\left[(N(u))(y,s)+\dot{\alpha}(s)u(y,s)\right]dyds
   +\alpha(t)\bar{u}_x(x).
\end{eqnarray*}

To determine $\alpha$, recall the decomposition $G=C\bar{u}_{x}e(y,t)+\tilde{G}$ from Proposition \ref{Pro8.7}, substitute this
decomposition into formulae of $u(x,t)$ derived above,
\begin{eqnarray*}
& &u(x,t)
   =\int_{-\infty}^{+\infty}\tilde{G}(x,t;y)u_{0}(y)dy
   -\int_{0}^{t}\int_{-\infty}^{+\infty}\tilde{G}_y(x,t-s;y)\left[N(u)+\dot{\alpha}u\right]dyds\\
& &+C\bar{u}_x(x)\left\{\alpha(t)+\int_{-\infty}^{+\infty}e(y,t)u_0(y)dy
   -\int_{0}^{t}\int_{-\infty}^{+\infty}e_y(y,t-s)[N(u)+\dot{\alpha}u]dyds\right\}.
\end{eqnarray*}
So to make the term $\bar{u}_x$ disappear, we have to define $\alpha$ as
\begin{equation*}
\begin{aligned}
\alpha(t)=-\int_{-\infty}^{+\infty}e(y,t)u_0(y)dy
\quad +\int_{0}^{t}\int_{-\infty}^{+\infty}e_y(y,t-s)(N(u)(y,s)+\dot{\alpha}(s)u(y,s))dyds.
\end{aligned}
\end{equation*}
We obtain the \textit{integral representation},
\begin{equation*}
\begin{aligned}
u(x,t)
=\int_{-\infty}^{+\infty}\tilde{G}(x,t;y)u_{0}(y)dy
 -\int_{0}^{t}\int_{-\infty}^{+\infty}\tilde{G}_y(x,t-s;y)\left(N(u)(y,s)+\dot{\alpha}(s)u(y,s)\right)dyds
\end{aligned}
\end{equation*}
differentiating $\alpha(t)$ with respect to $t$,
\begin{equation*}
\begin{aligned}
\dot{\alpha}(t)
=-\int_{-\infty}^{+\infty}e_t(y,t)u_0(y)dy
 +\int_{0}^{t}\int_{-\infty}^{+\infty}e_{yt}(y,t-s)(N(u)(y,s)+\dot{\alpha}(s)u(y,s))dyds.
\end{aligned}
\end{equation*}

\begin{lem}
Define
 \begin{equation}
 \zeta(t):=\sup_{0\leq s\leq t, 1\leq p\leq\infty}\left(|u|_{L^p}(s)(1+s)^{\frac{1}{2}(1-\frac{1}{p})}
 +|\dot{\alpha}(s)|(1+s)^{\frac{1}{2}}\right),
 \end{equation}
then for all $t\geq 0$ for which $\zeta(t)$ is finite, some $C>0$, and $E_0:=|u_0|_{L^1\cap L_{\infty}}$,
 \begin{equation}
 \zeta(t)\leq C(E_0+\zeta(t)^2). \label{zeta_est}
 \end{equation}
\end{lem}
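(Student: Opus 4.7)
The plan is to exploit the integral representations of $u$ and $\dot\alpha$ derived just above the lemma, together with the scattering decomposition $G=C\bar u'(x)e(y,t)+\tilde G(x,t;y)$ and the quantitative bounds of Proposition \ref{Pro8.7} and Lemma \ref{Lem8.8}. A Taylor expansion shows
\[
N(u)=f(\bar u)+f'(\bar u)u-f(u+\bar u)=-\tfrac{1}{2}f''(\xi)u^{2},
\]
so $|N(u)|\le C|u|^{2}$ pointwise; by the definition of $\zeta$ this gives, for $s\le t$,
\[
|N(u)|_{L^{1}}(s)\le C\zeta(t)^{2}(1+s)^{-1/2},\qquad |N(u)|_{L^{p}}(s)\le C\zeta(t)^{2}(1+s)^{-1+1/(2p)},
\]
and $\dot\alpha(s)u(\cdot,s)$ admits the same bounds in each norm.

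First I would establish $|u|_{L^{p}}(s)\le C(1+s)^{-\frac{1}{2}(1-\frac{1}{p})}(E_{0}+\zeta(t)^{2})$ for every $s\le t$ and $1\le p\le\infty$. The linear piece $\int\tilde G(x,s;y)u_{0}(y)\,dy$ is controlled by combining \eqref{8.43} for small $s$ (with $|u_{0}|_{L^{p}}\le E_{0}$ by log-convex interpolation) and \eqref{8.41} for large $s$, giving $\le C(1+s)^{-\frac{1}{2}(1-\frac{1}{p})}E_{0}$. The Duhamel piece $\int_{0}^{s}\!\int\tilde G_{y}(x,s-r;y)[N(u)+\dot\alpha u](y,r)\,dy\,dr$ is split at $r=s/2$: on $[0,s/2]$ I apply \eqref{8.42} with $h\in L^{1}$, yielding integrand $C(s-r)^{-\frac{1}{2}(1-\frac{1}{p})-\frac{1}{2}}\zeta(t)^{2}(1+r)^{-1/2}$ whose integral is $\le C(1+s)^{-\frac{1}{2}(1-\frac{1}{p})}\zeta(t)^{2}$; on $[s/2,s]$ I apply \eqref{8.44} with $h\in L^{p}$, yielding $C(s-r)^{-1/2}\zeta(t)^{2}(1+r)^{-1+1/(2p)}$, and since $(1+r)\sim(1+s)$ there, the integral in $r$ gives the same bound.

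The matching estimate $|\dot\alpha(s)|\le C(1+s)^{-1/2}(E_{0}+\zeta(t)^{2})$ comes from the identical scheme applied to
\[
\dot\alpha(s)=-\int e_{t}(y,s)u_{0}(y)\,dy+\int_{0}^{s}\!\int e_{yt}(y,s-r)[N(u)+\dot\alpha u](y,r)\,dy\,dr,
\]
using \eqref{8.46}--\eqref{8.47} at small and large $s$ for the linear term, then splitting the nonlinear integral at $r=s/2$: \eqref{8.46} on $[0,s/2]$ against the $L^{1}$ norms of $N+\dot\alpha u$, and \eqref{8.47} on $[s/2,s]$ against the $L^{\infty}$ norms (where $|N(u)|_{L^{\infty}},|\dot\alpha u|_{L^{\infty}}\le C\zeta(t)^{2}(1+r)^{-1}$); each contributes $\le C(1+s)^{-1/2}\zeta(t)^{2}$. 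Multiplying the two inequalities by the weights $(1+s)^{\frac{1}{2}(1-\frac{1}{p})}$ and $(1+s)^{1/2}$, adding, and taking the supremum over $s\in[0,t]$ and $p\in[1,\infty]$ yields \eqref{zeta_est}.

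The main technical obstacle is the bookkeeping for the Duhamel integrals: the splitting point $r=s/2$ and the dual use of the Green-function bounds, the $L^{1}\!\to\!L^{p}$ estimates \eqref{8.41}--\eqref{8.42} (carrying extra decay and used on the long half $[0,s/2]$) versus the $L^{p}\!\to\!L^{p}$ estimates \eqref{8.43}--\eqref{8.44} (with integrable $(s-r)^{-1/2}$ singularity and used on the short half $[s/2,s]$), must be organised so that every $p\in[1,\infty]$ reproduces exactly the weight $(1+s)^{-\frac{1}{2}(1-\frac{1}{p})}$ demanded by the definition of $\zeta$; apart from this delicate matching the estimates are routine.
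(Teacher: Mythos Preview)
Your proposal is correct and follows essentially the same route as the paper: Taylor expansion to get $N(u)=\mathbf{O}(u^{2})$, the same $L^{1}$ and $L^{p}$ bounds on $N(u)+\dot\alpha u$, the same splitting of the Duhamel integrals at the midpoint using \eqref{8.42} on the long half and \eqref{8.44} on the short half for $u$, and \eqref{8.46}/\eqref{8.47} analogously for $\dot\alpha$. Your extra care in handling the linear term by combining the $L^{p}\!\to\!L^{p}$ bound for small time with the $L^{1}\!\to\!L^{p}$ bound for large time is a minor elaboration the paper leaves implicit.
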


\begin{proof}
Use Taylor expansion of $f(u+\bar{u})$ in $N(u):=f(\bar{u})+f^{\prime}(\bar{u})u-f(u+\bar{u})$ to derive
$N(u)=-\frac{1}{2}f^{\prime\prime}(\bar{u})u^2+\mathbf{O}(u^3)=\mathbf{O}(u^2)$.

We have then the following estimates for $|N(u)+\dot{\alpha}u|_{L^1}(s)$ and $|N(u)+\dot{\alpha}u|_{L^p}(s)$,
 \begin{align*}
 &\quad|N(u)+\dot{\alpha}u|_{L^1}(s)\\
 &\leq C|u|_{L^1}(s)(|u|_{L^{\infty}}(s)+|\dot{\alpha}(s)|)\\
 &\leq C\left(|u|_{L^1}(s)+|\dot{\alpha}(s)|(1+s)^{\frac{1}{2}}\right)(1+s)^{-\frac{1}{2}}
        \left(|u|_{L^{\infty}}(s)(1+s)^{\frac{1}{2}}+|\dot{\alpha}(s)|(1+s)^{\frac{1}{2}}\right)\\
 &\leq C\zeta(s)^2 (1+s)^{-\frac{1}{2}}
 \end{align*}
and
 \begin{align*}
       |N(u)+\dot{\alpha}u|_{L^p}(s)
 &\leq C|u|_{L^p}(s)(|u|_{L^{\infty}}(s)+|\dot{\alpha}(s)|)\\
 &\leq C(1+s)^{-\frac{1}{2}(1-\frac{1}{p})}\left(|u|_{L^p}(s)(1+s)^{\frac{1}{2}(1-\frac{1}{p})}
       +|\dot{\alpha}(s)|(1+s)^{\frac{1}{2}}\right)\\
 &\quad(1+s)^{-\frac{1}{2}}
       \left(|u|_{L^{\infty}}(s)(1+s)^{\frac{1}{2}}+|\dot{\alpha}(s)|(1+s)^{\frac{1}{2}}\right)\\
 &\leq C\zeta(s)^2 (1+s)^{-\frac{1}{2}(1-\frac{1}{p})-\frac{1}{2}}.
 \end{align*}

Since we already have the bounds of $\tilde{G}$ and $e$, we can now estimate $|u(\cdot,t)|_{L^p(x)}$. Utilize the
representation of $u(x,t)$,
 \begin{align*}
       |u(\cdot,t)|_{L^p(x)}
 &\leq C(1+t)^{-\frac{1}{2}(1-\frac{1}{p})}E_0
       +C\int_{0}^{\frac{t}{2}}(t-s)^{-\frac{1}{2}(1-\frac{1}{p})-\frac{1}{2}}|N(u)+\dot{\alpha}u|_{L^1}(s)ds\\
 &\quad+C\int_{\frac{t}{2}}^{t}(t-s)^{-\frac{1}{2}}|N(u)+\dot{\alpha}u|_{L^p}(s)ds\\
 &\leq C(1+t)^{-\frac{1}{2}(1-\frac{1}{p})}E_0
       +C\int_{0}^{\frac{t}{2}}(t-s)^{-\frac{1}{2}(1-\frac{1}{p})-\frac{1}{2}}\zeta(s)^2 (1+s)^{-\frac{1}{2}}ds\\
 &\quad+C\int_{\frac{t}{2}}^{t}(t-s)^{-\frac{1}{2}}\zeta(s)^2 (1+s)^{-\frac{1}{2}(1-\frac{1}{p})-\frac{1}{2}}ds\\
 &\leq C(E_0+\zeta(t)^2)(1+t)^{-\frac{1}{2}(1-\frac{1}{p})}
 \end{align*}

Similarly, for $|\dot{\alpha}(t)|$, we have
 \begin{align*}
       |\dot{\alpha}(t)|
 &\leq C(1+t)^{-\frac{1}{2}}E_0
       +C\int_{0}^{\frac{t}{2}}(t-s)^{-1}|N(u)+\dot{\alpha}u|_{L^1}(s)ds\\
 &\quad+C\int_{\frac{t}{2}}^{t}(t-s)^{-\frac{1}{2}}|N(u)+\dot{\alpha}u|_{L^{\infty}}(s)ds\\
 &\leq C(1+t)^{-\frac{1}{2}}E_0
       +C\int_{0}^{\frac{t}{2}}(t-s)^{-1}\zeta(s)^2 (1+s)^{-\frac{1}{2}}ds\\
 &\quad+C\int_{\frac{t}{2}}^{t}(t-s)^{-\frac{1}{2}}\zeta(s)^2 (1+s)^{-1}ds\\
 &\leq C(E_0+\zeta(t)^2)(1+t)^{-\frac{1}{2}}
 \end{align*}

Rearranging the above two estimates together we obtain \eqref{zeta_est}.
\end{proof}

Now we can prove our main result, Theorem \ref{mainthm}.
\begin{proof}[Proof of Thoerem \ref{mainthm}]
The first two bounds are proved by continuous induction. The third follows using \eqref{8.45} and \eqref{8.46}.
To show the last inequality, notice that
$$\tilde{u}(x,t)-\bar{u}(x)=u(x-\alpha(t),t)-(\bar{u}(x)-\bar{u}(x-\alpha(t))),$$
so that $|\tilde{u}(\cdot,t)-\bar{u}|$ is controlled by the sum of $|u|$ and
$\bar{u}-\bar{u}(x-\alpha(t))=\alpha(t)|\bar{u}^{\prime}(x)|$, hence remains $\leq CE_0$ for all $t\geq 0$, for $E_0$
sufficiently small.
\end{proof}

This completes the proof of Theorem \ref{mainthm}, giving the nonlinear stability.

\section{Small-amplitude Green function bounds}

In this section we consider the small-amplitude stationary
profiles $\bar u^\epsilon(x)$ of one-dimensional strictly parabolic
viscous conservation law \eqref{generalscalar} in a neighborhood of a particular state $u_0$.
\begin{equation}
u_t + f(u)_x = u_{xx}, \label{generalscalar}
\end{equation}
where $u \in C^2(\mathbb{R}^2 \to \mathbb{R}), f \in C^2(\mathbb{R} \to \mathbb{R})$, $u=u(x,t)$,
$u=\bar{u}^{\epsilon}(x-st)$, $\lim_{z \to \pm \infty} \bar{u}^{\epsilon}(z) = u^{\epsilon}_{\pm}$.
After linearizing \eqref{generalscalar} about $\bar{u}^{\epsilon}$, we have
\begin{eqnarray}
v_t=Lv:=v_{xx}-(av)_x=v_{xx}-a_x v-av_x, \label{generalscalarlinearized}
\end{eqnarray}
where $a=a(x)=f^{\prime}(\bar{u}^{\epsilon}(x))$.
Here, $\epsilon>0$ denotes the shock strength $\epsilon:=|u^\epsilon_+-u^\epsilon_-|$, and profiles $\bar u^\epsilon(\cdot)$
are assumed to converge as $\epsilon\to 0$ to $u_0$.  Under these
assumptions, the center-manifold argument of Majda and Pego yields
convergence after rescaling of $\bar u^\epsilon$ to the standard
Burgers profile \eqref{burgersprofile} below, which we shall describe here.

We compare \eqref{generalscalar} with the \textit{Burgers equation},
\begin{equation}
u_t + \left(\frac{u^2}{2}\right)_x= u_{xx}, \label{burgers}
\end{equation}
which approximately describes small-amplitude viscous behavior in the principal characteristic mode to our general scalar
conservation law \eqref{generalscalar}.
In particular, the family of exact solutions
\begin{equation}
\tilde{\bar{u}}^{\epsilon}(x):= -\epsilon \tanh\left(\frac{\epsilon x}{2}\right), \label{burgersprofile}
\end{equation}
give an asymptotic description of the structure of weak viscous shock profiles in the principal direction, in the limit as amplitude $\epsilon$ goes to zero.

\medskip
{\bf The rescaled profile equations.}
Let $\tilde{\eta}:=\bar{u}^{\epsilon}-\frac{u_{+}+u_{-}}{2}$, $\epsilon:=|u_{+}-u_{-}|$,
we have the reduced flow $\tilde{\eta}$ satisfying,
\begin{equation}
\beta \tilde{\eta}_{\tilde{x}}=\frac{\Lambda}{2}(\tilde{\eta}^2-\epsilon^2)+\mathbf{O}(|\tilde{\eta},\epsilon|^3)
\label{generalreduced}
\end{equation}
where $\beta$ and $\Lambda$ are positive constants, after rescaling
$\tilde{\eta} \to \eta=\tilde{\eta}/\epsilon, \tilde{x} \to x=\Lambda \epsilon \tilde{x}/\beta$
we get
\begin{equation}
\eta^{\prime}=\frac{1}{2}(\eta^2-1)+\epsilon Q(\eta,\epsilon)
\label{generalreducedrescale}
\end{equation}
where $Q(\eta,\epsilon)\in C^1(\mathbb{R}^2)$ and $^{\prime}$ denotes $\frac{d}{dx}$.
Compare this to the exact profile equation of Burgers
\begin{equation}
\bar{\eta}^{\prime}=\frac{1}{2}(\bar{\eta}^2-1) \label{burgerprofileequation}.
\end{equation}
Standard Implicit Function Theorem and stable/unstable manifolds estimates give us,
\begin{pro} \label{ProProfile}
There holds
 \begin{align}
 |\eta_{\pm}-\bar{\eta}_{\pm}| &\leq C\epsilon, \label{endpointsestimate} \\
 |(\eta-\eta_{\pm})-(\bar{\eta}-\bar{\eta}_{\pm})| &\leq C\epsilon e^{-\theta|x|} \label{reducedprofileestimate}
 \end{align}
for $x\gtrless 0$ and for any fixed $0<\theta<1$, and some $C=C(\theta)>0$, where $\eta_{\pm}$ and $\bar{\eta}_{\pm}=\mp 1$ denote the rest points of \eqref{generalreducedrescale} and \eqref{burgerprofileequation}, respectively.
\end{pro}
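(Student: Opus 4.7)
The plan is to treat \eqref{endpointsestimate} by the Implicit Function Theorem applied to the algebraic rest-point equation, and to treat \eqref{reducedprofileestimate} by a stable/unstable manifold comparison between \eqref{generalreducedrescale} and \eqref{burgerprofileequation} followed by a Gronwall-type estimate in an exponentially weighted norm.

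\textbf{Step 1 (Rest points).} The rest points of \eqref{generalreducedrescale} satisfy $F(\eta,\epsilon):=\tfrac{1}{2}(\eta^2-1)+\epsilon Q(\eta,\epsilon)=0$, and $\bar\eta_\pm=\mp 1$ solve $F(\cdot,0)=0$. Since $F_\eta(\mp 1,0)=\mp 1\neq 0$, the Implicit Function Theorem produces $C^1$ branches $\epsilon\mapsto \eta_\pm(\epsilon)$ with $\eta_\pm(0)=\bar\eta_\pm$, giving \eqref{endpointsestimate}.

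\textbf{Step 2 (Stable-manifold structure and matching of translations).} Both \eqref{generalreducedrescale} and \eqref{burgerprofileequation} are autonomous scalar ODEs with hyperbolic rest points (linearized eigenvalue $\eta_+ +\mathbf{O}(\epsilon)=-1+\mathbf{O}(\epsilon)$ at the right endpoint, and $+1+\mathbf{O}(\epsilon)$ at the left), so by the stable manifold theorem with parameters the connecting trajectories depend $C^1$ on $\epsilon$ modulo translation. I fix translations by imposing the midpoint normalization $\eta(0)=\tfrac{1}{2}(\eta_++\eta_-)$ and $\bar\eta(0)=\tfrac{1}{2}(\bar\eta_++\bar\eta_-)=0$; smooth dependence of the intersection of the trajectory with the transversal hyperplane $\{\eta=\text{midpoint}\}$ in $\epsilon$ is a standard IFT argument. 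This normalization gives $|\eta(0)-\bar\eta(0)|\leq C\epsilon$ by Step~1.

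\textbf{Step 3 (Exponential comparison).} Set $u:=\eta-\eta_+$, $\bar u:=\bar\eta-\bar\eta_+$, $\psi:=u-\bar u$. Using $F(\eta_+,\epsilon)=0$ and a one-term Taylor expansion in the first argument of $Q$, one computes
\begin{align*}
u'&=\eta_+ u+\tfrac{1}{2}u^2+\epsilon\bigl[Q(u+\eta_+,\epsilon)-Q(\eta_+,\epsilon)\bigr],\\
\bar u'&=\bar\eta_+\bar u+\tfrac{1}{2}\bar u^2,\\
\psi'&=\bigl(\eta_++\tfrac{1}{2}(u+\bar u)\bigr)\psi+(\eta_+-\bar\eta_+)\bar u+\epsilon\bigl[Q(u+\eta_+,\epsilon)-Q(\eta_+,\epsilon)\bigr].
\end{align*}
Standard scalar stable-manifold bounds applied separately to \eqref{generalreducedrescale} and \eqref{burgerprofileequation} give $|u(x)|,|\bar u(x)|\leq Ce^{-\theta x}$ for $x\geq 0$ and any fixed $\theta\in(0,1)$, once $\epsilon$ is small. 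Therefore the linear coefficient equals $-1+\mathbf{O}(\epsilon+e^{-\theta|x|})$, while Step~1 together with $|Q(u+\eta_+,\epsilon)-Q(\eta_+,\epsilon)|\leq C|u|$ bounds the forcing by $C\epsilon e^{-\theta|x|}$. Integrating the resulting linear inequality in the weight $e^{\theta x}$ from $x=0$ (where $|\psi(0)|\leq C\epsilon$ by Step~2) forward to $+\infty$ and invoking $\psi(+\infty)=0$ yields \eqref{reducedprofileestimate} on $x\geq 0$; the case $x\leq 0$ is symmetric.

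\textbf{Main obstacle.} The routine pieces are the Implicit Function and stable-manifold inputs; the delicate point is the coupling of the translation freedom with the requested \emph{exponentially decaying} bound. Picking translations so as to match only a single value (say at $x=0$) forces one to control $\psi$ on both half-lines with a one-sided Gronwall argument, which works because the linearized flow at each rest point contracts toward that rest point in the appropriate direction; this is what makes the weighted integration in Step~3 close, with the weight $\theta<1$ needed to absorb the $\mathbf{O}(\epsilon)$ perturbation of the linearized eigenvalue.
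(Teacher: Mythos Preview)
Your proposal is correct and follows precisely the approach the paper indicates: the paper does not give a detailed proof but only states that the proposition follows from ``standard Implicit Function Theorem and stable/unstable manifolds estimates,'' and your Steps~1--3 supply exactly those details. The one small point worth noting is that in Step~3 the convolution $\int_0^x e^{-\theta(x-s)}e^{-\theta s}\,ds = x e^{-\theta x}$ produces an extra factor of $x$, but this is harmless since $x e^{-\theta x}\leq C_{\theta'}e^{-\theta' x}$ for any $\theta'<\theta$, and as $\theta<1$ was arbitrary the stated bound follows.
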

This suggests that we can expect similar stability results of profiles $\bar{u}^{\epsilon}(x)$ as that of
$\tilde{\bar{u}}^{\epsilon}(x)$.

Using our former results on Green function bounds of the general scalar conservation law, we can derive the Green bounds under the rescalled coordinates. First, let's recall Theorem \ref{greenfunctionbounds}, we rescale it using
$\tilde{x}=\Lambda\epsilon x, \tilde{t}=\Lambda^2\epsilon^2 t,
\tilde{u}=\frac{u}{\Lambda\epsilon}, \tilde{a}=\frac{1}{\Lambda\epsilon}a$.
Note that from now on the coordinate system $(x,t;y)$-$u$ denotes the coordinate system of the small amplitude problem
($\epsilon$-dependent case), and the coordinate system $(\tilde{x},\tilde{t};\tilde{y})$-$\tilde{u}$ denotes the rescaled small
amplitude problem(back to normal amplitude case or $\epsilon$-independent case).
We define
$\tilde{u}(\tilde{x},\tilde{t})$ and the flux function $\tilde{f}$ as
$
\tilde{u}(\tilde{x},\tilde{t})=\frac{1}{\Lambda\epsilon}u(x,t), \quad
\tilde{f}(\cdot)=\frac{1}{\Lambda^2\epsilon^2}f(\Lambda\epsilon(\cdot)).
$
If we substitute $u$ by $v$, $a^{\epsilon}$, we get
$
\tilde{v}(\tilde{x},\tilde{t})=\frac{1}{\Lambda\epsilon}v(x,t), \quad
\tilde{a}(\tilde{x})=\frac{1}{\Lambda\epsilon}a^{\epsilon}(x)
$.
Now we can compute the derivatives
\begin{eqnarray*}
\tilde{u}_{\tilde{x}}(\tilde{x},\tilde{t})&=&\frac{\partial}{\partial\tilde{x}}\left[\tilde{u}(\tilde{x},\tilde{t})\right]
=\frac{1}{\Lambda\epsilon}\frac{\partial}{\partial x}\left[\frac{1}{\Lambda\epsilon}u(x,t)\right]=\frac{1}{\Lambda^2\epsilon^2}u_x(x,t),\\
\tilde{u}_{\tilde{x}\tilde{x}}(\tilde{x},\tilde{t})
&=&\frac{\partial}{\partial\tilde{x}}\left[\frac{1}{\Lambda^2\epsilon^2}u_x(x,t)\right]
=\frac{1}{\Lambda\epsilon}\frac{\partial}{\partial x}\left[\frac{1}{\Lambda^2\epsilon^2}u_{x}(x,t)\right]
=\frac{1}{\Lambda^3\epsilon^3}u_{xx}(x,t),\\
\tilde{u}_{\tilde{t}}(\tilde{x},\tilde{t})&=&\frac{\partial}{\partial\tilde{t}}\left[\tilde{u}(\tilde{x},\tilde{t})\right]
=\frac{1}{\Lambda^2\epsilon^2}\frac{\partial}{\partial t}\left[\frac{1}{\Lambda\epsilon}u(x,t)\right]=\frac{1}{\Lambda^3\epsilon^3}u_t(x,t).
\end{eqnarray*}
Thus
$
\tilde{u}_{\tilde{t}}(\tilde{x},\tilde{t})+\tilde{u}(\tilde{x},\tilde{t})\tilde{u}_{\tilde{x}}(\tilde{x},\tilde{t})
-\tilde{u}_{\tilde{x}\tilde{x}}(\tilde{x},\tilde{t})
=\frac{1}{\Lambda^3\epsilon^3}\left[u_t(x,t)+u(x,t)u_x(x,t)-u_{xx}(x,t)\right]=0
$,
and notice that
\begin{eqnarray*}
&\tilde{v}_{\tilde{t}}(\tilde{x},\tilde{t})=\frac{1}{\Lambda^3\epsilon^3}v_t(x,t), \quad
\tilde{v}_{\tilde{x}}(\tilde{x},\tilde{t})=\frac{1}{\Lambda^2\epsilon^2}v_x(x,t), \quad
\tilde{v}_{\tilde{x}\tilde{x}}(\tilde{x},\tilde{t})=\frac{1}{\Lambda^3\epsilon^3}v_{xx}(x,t), \\
&\left[\tilde{a}(\tilde{x})\tilde{v}(\tilde{x},\tilde{t})\right]_{\tilde{x}}
=\frac{1}{\Lambda\epsilon}\left[\frac{1}{\Lambda\epsilon}a^{\epsilon}(x)\frac{1}{\Lambda\epsilon}v(x,t)\right]_x
=\frac{1}{\Lambda^3\epsilon^3}\left[a^{\epsilon}(x)v(x,t)\right]_x,
\end{eqnarray*}
so $\tilde{v}(\tilde{x},\tilde{t})$ satisfies
$
\tilde{v}_{\tilde{t}}(\tilde{x},\tilde{t})+\left[\tilde{a}(\tilde{x})\tilde{v}(\tilde{x},\tilde{t})\right]_{\tilde{x}}
-\tilde{v}_{\tilde{x}\tilde{x}}(\tilde{x},\tilde{t})
=\frac{1}{\Lambda^3\epsilon^3}\left\{v_t(x,t)+\left[a^{\epsilon}(x)v(x,t)\right]_x-v_{xx}(x,t)\right\}=0
$.
The flux function $\tilde{f}(\tilde{u})=\frac{1}{\Lambda^2\epsilon^2}f(u)=\frac{1}{\Lambda^2\epsilon^2}f(\Lambda\epsilon\tilde{u})$, so
$\tilde{f}^{\prime}(\tilde{u})=\frac{1}{\Lambda\epsilon}f^{\prime}(\Lambda\epsilon\tilde{u})
=\frac{1}{\Lambda\epsilon}f^{\prime}(u).$

By the $\epsilon$-independent case $(3.5)$ in section 3 of \cite{Z1},
\begin{equation}
\tilde{v}(\tilde{x},\tilde{t})=\int_{-\infty}^{+\infty}G(\tilde{x},\tilde{t};\tilde{y})\tilde{h}(\tilde{y})d\tilde{y}
\label{rescaledlinearsolver}
\end{equation}
where $\tilde{h}(\tilde{x})=\tilde{v}(\tilde{x},0)=\frac{1}{\Lambda\epsilon}v(x,0)=\frac{1}{\Lambda\epsilon}h(x)$,
and $\tilde{y}=\Lambda\epsilon y$.
From \eqref{rescaledlinearsolver} we have
$$
\frac{1}{\Lambda\epsilon}v(x,t)=\tilde{v}(\tilde{x},\tilde{t})
=\int_{-\infty}^{+\infty}G(\tilde{x},\tilde{t};\tilde{y})\frac{1}{\Lambda\epsilon}h(y)\Lambda\epsilon dy,
$$
so
$$
v(x,t)=\Lambda\epsilon\int_{-\infty}^{+\infty}G(\tilde{x},\tilde{t};\tilde{y})h(y)dy
=\int_{-\infty}^{+\infty}\Lambda\epsilon G(\tilde{x},\tilde{t};\tilde{y})h(y)dy
=\int_{-\infty}^{+\infty}G^{\epsilon}(x,t;y)h(y)dy,
$$
where $G^{\epsilon}(x,t;y)
=\Lambda\epsilon G(\tilde{x},\tilde{t};\tilde{y})
=\Lambda\epsilon G(\Lambda\epsilon x,\Lambda^2\epsilon^2 t;\Lambda\epsilon y)$,
and $E^{\epsilon}$ and $S^{\epsilon}$ can be carried out explicitly as,
\begin{eqnarray*}
E^{\epsilon}(x,t;y)
&=&\Lambda\epsilon E(\tilde{x},\tilde{t};\tilde{y})
   =\Lambda\epsilon E(\Lambda\epsilon x,\Lambda^2\epsilon^2 t;\Lambda\epsilon y)\\
&=&C\Lambda\epsilon \tilde{\bar{u}}_{\tilde{x}}(\tilde{x})
   \left(\mathrm{errfn}\left(\frac{\tilde{y}+\tilde{f}^{\prime}(\tilde{\bar{u}}_{-})\tilde{t}}{\sqrt{4\tilde{t}}}\right)
   -\mathrm{errfn}\left(\frac{\tilde{y}-\tilde{f}^{\prime}(\tilde{\bar{u}}_{-})\tilde{t}}{\sqrt{4\tilde{t}}}\right)\right)\\
&=&C\Lambda\epsilon \frac{1}{\Lambda^2\epsilon^2}\bar{u}^{\epsilon}_{x}(x)
   \left(\mathrm{errfn}\left(\frac{\Lambda\epsilon y+\frac{1}{\Lambda\epsilon}f^{\prime}(\bar{u}_{-})\Lambda^2\epsilon^2 t}{\sqrt{4\Lambda^2\epsilon^2t}}\right)\right.\\
& &\left.-\mathrm{errfn}\left(\frac{\Lambda\epsilon y-\frac{1}{\Lambda\epsilon}f^{\prime}(\bar{u}_{-})\Lambda^2\epsilon^2
   t}{\sqrt{4\Lambda^2\epsilon^2 t}}\right)\right)\\
&=&C\frac{1}{\Lambda\epsilon}\bar{u}^{\epsilon}_{x}(x)
   \left(\mathrm{errfn}\left(\frac{y+f^{\prime}(\bar{u}_{-})t}{\sqrt{4t}}\right)
   -\mathrm{errfn}\left(\frac{y-f^{\prime}(\bar{u}_{-})t}{\sqrt{4t}}\right)\right)
\end{eqnarray*}
and
\begin{eqnarray*}
S^{\epsilon}(x,t;y)
&=&\Lambda\epsilon S(\tilde{x},\tilde{t};\tilde{y})
   =\Lambda\epsilon S(\Lambda\epsilon x,\Lambda^2\epsilon^2 t;\Lambda\epsilon y)\\
&=&\Lambda\epsilon\left[(4\pi \tilde{t})^{-\frac{1}{2}}
   e^{-\frac{\left(\tilde{x}-\tilde{y}-\tilde{f}^{\prime}(\tilde{\bar{u}}_{-})\tilde{t}\right)^2}{4t}}
   \left(\frac{e^{-\tilde{x}}}{e^{\tilde{x}}+e^{-\tilde{x}}}\right)\right.\\
& &\left.+(4\pi \tilde{t})^{-\frac{1}{2}}
   e^{-\frac{\left(\tilde{x}-\tilde{y}+\tilde{f}^{\prime}(\tilde{\bar{u}}_{-})\tilde{t}\right)^2}{4t}}
   \left(\frac{e^{\tilde{x}}}{e^{\tilde{x}}+e^{-\tilde{x}}}\right)\right]\\
&=&\Lambda\epsilon\left[(4\pi \Lambda^2\epsilon^2 t)^{-\frac{1}{2}}
   e^{-\frac{\left(\Lambda\epsilon x-\Lambda\epsilon y-\frac{1}{\Lambda\epsilon}f^{\prime}(\bar{u}_{-})\Lambda^2\epsilon^2 t\right)^2}{4\Lambda^2\epsilon^2 t}}
   \left(\frac{e^{-\Lambda\epsilon x}}{e^{\Lambda\epsilon x}+e^{-\Lambda\epsilon x}}\right)\right.\\
& &\left.+(4\pi \Lambda^2\epsilon^2 t)^{-\frac{1}{2}}
   e^{-\frac{\left(\Lambda\epsilon x-\Lambda\epsilon y+\frac{1}{\Lambda\epsilon}f^{\prime}(\bar{u}_{-})\Lambda^2\epsilon^2 t\right)^2}{4\Lambda^2\epsilon^2 t}}
   \left(\frac{e^{\Lambda\epsilon x}}{e^{\Lambda\epsilon x}+e^{-\Lambda\epsilon x}}\right)\right]\\
&=&(4\pi t)^{-\frac{1}{2}}e^{-\frac{\left(x-y-f^{\prime}(\bar{u}_{-})t\right)^2}{4t}}
   \left(\frac{e^{-\Lambda \epsilon x}}{e^{\Lambda \epsilon x}+e^{-\Lambda \epsilon x}}\right)\\
& &+(4\pi t)^{-\frac{1}{2}}e^{-\frac{\left(x-y+f^{\prime}(\bar{u}_{-})t\right)^2}{4t}}
   \left(\frac{e^{\Lambda \epsilon x}}{e^{\Lambda \epsilon x}+e^{-\Lambda \epsilon x}}\right).
\end{eqnarray*}

Finally, $R^{\epsilon}$ and $R^{\epsilon}_{y}$ can be similarly derived.

Now we have the following estimates by rescaling Theorem \ref{greenfunctionbounds}.

\begin{pro}
The Green function $G(x,t;y)$ associated with the linearized evolution equation
 \eqref{generalscalarlinearized} may be decomposed as
 \begin{equation}
 G^{\epsilon}(x,t;y)=E^{\epsilon}(x,t;y)+S^{\epsilon}(x,t;y)+R^{\epsilon}(x,t;y)
 \end{equation}
where for $y \leq 0:$
 \begin{equation}
 E^{\epsilon}(x,t;y)=C\bar{u}^{\epsilon}_{x}(x)\frac{1}{\Lambda\epsilon}
 \left(\mathrm{errfn}\left(\frac{y+f^{\prime}(\bar{u}_{-})t}{\sqrt{4t}}\right)
 -\mathrm{errfn}\left(\frac{y-f^{\prime}(\bar{u}_{-})t}{\sqrt{4t}}\right)\right),
 \end{equation}
and
 \begin{equation}
 \begin{aligned}
 S^{\epsilon}(x,t;y)
 &     =(4\pi t)^{-\frac{1}{2}}e^{-\frac{\left(x-y-f^{\prime}(\bar{u}_{-})t\right)^2}{4t}}\left(\frac{e^{-\Lambda \epsilon x}}{e^{\Lambda \epsilon x}+e^{-\Lambda \epsilon x}}\right)\\
 &\quad +(4\pi t)^{-\frac{1}{2}}e^{-\frac{\left(x-y+f^{\prime}(\bar{u}_{-})t\right)^2}{4t}}\left(\frac{e^{\Lambda \epsilon x}}{e^{\Lambda \epsilon x}+e^{-\Lambda \epsilon x}}\right),
 \end{aligned}
 \end{equation}

 \begin{equation}
 \begin{aligned}
 R^{\epsilon}(x,t;y)
 &     =\mathbf{O}\left(\Lambda\epsilon e^{-\eta \Lambda\epsilon t}e^{-\frac{|x-y|^2}{Mt}}\right)\\
 &\quad +\mathbf{O}\left((t+1)^{-\frac{1}{2}}e^{-\eta x^+}+e^{-\eta |x|}\right)t^{-\frac{1}{2}}e^{-\frac{(x-y-f^{\prime}(\bar{u}_{-})t)^2}{Mt}}
 \end{aligned}
 \end{equation}

 \begin{equation}
 \begin{aligned}
 R^{\epsilon}_{y}(x,t;y)
 &     =\mathbf{O}\left(\Lambda\epsilon e^{-\eta \Lambda\epsilon t}e^{-\frac{|x-y|^2}{Mt}}\right)\\
 &\quad +\mathbf{O}\left((t+1)^{-\frac{1}{2}}e^{-\eta x^+}+e^{-\eta |x|}\right)t^{-1}e^{-\frac{(x-y-f^{\prime}(\bar{u}_{-})t)^2}{Mt}}
 \end{aligned}
 \end{equation}
\end{pro}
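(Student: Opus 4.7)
The proof is by direct rescaling from Theorem \ref{greenfunctionbounds}. The preceding paragraphs already establish the central identity $G^{\epsilon}(x,t;y) = \Lambda\epsilon\, G(\tilde{x},\tilde{t};\tilde{y})$ with $\tilde{x} = \Lambda\epsilon x$, $\tilde{t} = \Lambda^2\epsilon^2 t$, $\tilde{y} = \Lambda\epsilon y$, and work out $E^{\epsilon}$ and $S^{\epsilon}$ explicitly. Moreover, by Proposition \ref{ProProfile}, the rescaled profile $\tilde{\bar{u}}^{\epsilon}$ is an order-one Lax shock uniformly in $\epsilon \in (0,1]$, so Theorem \ref{greenfunctionbounds} applies to the rescaled linearized equation and yields, in tilde variables, the decomposition $G = E + S + R$ with all constants (the $\eta$, $M$, and implicit $\mathbf{O}$-constants) uniform in $\epsilon$.

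The plan is therefore to work out only the remainder terms $R^{\epsilon}$ and $R^{\epsilon}_y$. I would substitute $\tilde{x} = \Lambda\epsilon x$, $\tilde{t} = \Lambda^2\epsilon^2 t$, $\tilde{y} = \Lambda\epsilon y$ into each summand of the bound \eqref{8.6} for $R$, multiply by the overall $\Lambda\epsilon$ factor from the rescaling identity, and (for the $y$-derivative case) account additionally for the chain rule $\partial_{\tilde{y}} = (\Lambda\epsilon)^{-1}\partial_y$. Two simplifications are automatic: the Gaussian exponents $|\tilde{x}-\tilde{y}|^2/(M\tilde{t})$ and $(\tilde{x}-\tilde{y}-\tilde{f}^{\prime}(\tilde{\bar{u}}_-)\tilde{t})^2/(M\tilde{t})$ are scale-invariant, becoming $|x-y|^2/(Mt)$ and $(x-y-f^{\prime}(\bar{u}_-)t)^2/(Mt)$ respectively; and the prefactor $\tilde{t}^{-1/2} = (\Lambda\epsilon)^{-1}t^{-1/2}$ cancels the overall $\Lambda\epsilon$ to produce the stated $t^{-1/2}$ (respectively $t^{-1}$ for the $y$-derivative, through $\tilde{t}^{-1}$).

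The main point where bookkeeping matters is the translation of the decay rates. Under $\tilde{x} = \Lambda\epsilon x$, the spatial factor $e^{-\eta|\tilde{x}|}$ becomes $e^{-\eta\Lambda\epsilon|x|}$ and similarly for $e^{-\eta\tilde{x}^+}$; the time-exponential $e^{-\eta\tilde{t}}$ becomes $e^{-\eta\Lambda^2\epsilon^2 t}$; and $(\tilde{t}+1)^{-1/2} = (\Lambda^2\epsilon^2 t+1)^{-1/2}$ is uniformly equivalent to $(t+1)^{-1/2}$ up to a multiplicative constant depending only on $\Lambda$. Relabeling the rate constant $\eta$ to absorb factors of $\Lambda\epsilon$ appropriately, these pieces assemble to the bounds stated for $R^{\epsilon}$ and $R^{\epsilon}_y$.

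I expect no genuine analytic obstacle here: the proposition is entirely a change of variables, and its content reduces to Theorem \ref{greenfunctionbounds} in tilde coordinates combined with the uniformity supplied by Proposition \ref{ProProfile}. The only real subtlety is to verify that constants such as $\eta$ and $M$ in Theorem \ref{greenfunctionbounds} can be taken independent of $\epsilon$ when applied to the rescaled family $\tilde{\bar{u}}^{\epsilon}$; this in turn follows from the $C^1$-convergence of $\tilde{\bar{u}}^{\epsilon}$ to the Burgers profile \eqref{burgersprofile} encoded in \eqref{endpointsestimate}--\eqref{reducedprofileestimate}, which guarantees that all geometric ingredients feeding into Theorem \ref{greenfunctionbounds} (Evans function non-vanishing, asymptotic decay rates of $\bar{u}^{\epsilon}$, resolvent kernel expansions) are uniform on the compact interval $\epsilon \in (0,1]$.
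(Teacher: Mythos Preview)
Your proposal is correct and follows exactly the paper's approach: the proposition is stated immediately after the explicit rescaling computations for $E^{\epsilon}$ and $S^{\epsilon}$, with the paper remarking only that ``$R^{\epsilon}$ and $R^{\epsilon}_{y}$ can be similarly derived'' and that the estimates follow ``by rescaling Theorem \ref{greenfunctionbounds}.'' Your write-up in fact supplies more detail than the paper does, including the important observation (left implicit in the paper) that the constants in Theorem \ref{greenfunctionbounds} are uniform over the rescaled family by virtue of Proposition \ref{ProProfile}.
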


Again by rescaling, we can recover all the estimates in the $\epsilon$-independent case for the small amplitude case.
Here we give an example of how the rescaling argument works to recover the bounds in Proposition \ref{Pro8.7}.
Recall that we have the following estimate in the $\epsilon$-independent case,
\begin{equation*}
\left|\int_{-\infty}^{+\infty}\tilde{G}(\tilde{x},\tilde{t};\tilde{y})\tilde{h}(\tilde{y})d\tilde{y}\right|_{L^{p}(\tilde{x})}
\leq C\tilde{t}^{-\frac{1}{2}(1-\frac{1}{p})}|\tilde{h}|_{L^1},
\end{equation*}
now we substitute $\frac{1}{\Lambda\epsilon}\tilde{G}^{\epsilon}(x,t;y)$ for $\tilde{G}(\tilde{x},\tilde{t};\tilde{y})$,
$\frac{1}{\Lambda\epsilon}h(y)$ for $\tilde{h}(\tilde{y})$,
$\Lambda\epsilon y$ for $\tilde{y}$, $\Lambda^2\epsilon^2 t$ for $\tilde{t}$, we get
\begin{align*}
      \left|\int_{-\infty}^{+\infty}\frac{1}{\Lambda\epsilon}\tilde{G}^{\epsilon}(x,t;y)\frac{1}{\Lambda\epsilon}h(y)
      \Lambda\epsilon dy\right|_{L^{p}(\tilde{x})}
&\leq C(\Lambda^2\epsilon^2 t)^{-\frac{1}{2}(1-\frac{1}{p})}\int_{-\infty}^{+\infty}|\tilde{h}(\tilde{y})|d\tilde{y}\\
      \left(\int_{-\infty}^{+\infty}\left|\int_{-\infty}^{+\infty}
      \frac{1}{\Lambda\epsilon}\tilde{G}^{\epsilon}(x,t;y)\frac{1}{\Lambda\epsilon}h(y)
      \Lambda\epsilon dy\right|^{p}\Lambda\epsilon dx\right)^{\frac{1}{p}}
&\leq C(\Lambda^2\epsilon^2 t)^{-\frac{1}{2}(1-\frac{1}{p})}
      \int_{-\infty}^{+\infty}\left|\frac{1}{\Lambda\epsilon}h(y)\right|\Lambda\epsilon dy\\
      (\Lambda\epsilon)^{\frac{1}{p}-1}\left(\int_{-\infty}^{+\infty}\left|\int_{-\infty}^{+\infty}
      \tilde{G}^{\epsilon}(x,t;y)h(y)
      dy\right|^{p}dx\right)^{\frac{1}{p}}
&\leq C(\Lambda\epsilon)^{-(1-\frac{1}{p})}t^{-\frac{1}{2}(1-\frac{1}{p})}
      \int_{-\infty}^{+\infty}\left|h(y)\right|dy
\end{align*}
when we cancel $(\Lambda\epsilon)^{\frac{1}{p}-1}$ on both sides, we get
\begin{equation*}
\left(\int_{-\infty}^{+\infty}\left|\int_{-\infty}^{+\infty}\tilde{G}^{\epsilon}(x,t;y)h(y)dy\right|^{p}dx\right)^{\frac{1}{p}}
\leq Ct^{-\frac{1}{2}(1-\frac{1}{p})}\int_{-\infty}^{+\infty}\left|h(y)\right|dy
\end{equation*}
or equivalently
\begin{equation*}
\left|\int_{-\infty}^{+\infty}\tilde{G}^{\epsilon}(x,t;y)h(y)dy\right|_{L^{p}(x)}
\leq Ct^{-\frac{1}{2}(1-\frac{1}{p})}|h|_{L^1}.
\end{equation*}
Similarly, we can recover all other estimates with the $\tilde{G}^{\epsilon}$ term.

For the estimates with term
$e^{\epsilon}(y,t)$, we note that $\tilde{e}(\tilde{y},\tilde{t})=\Lambda\epsilon e^{\epsilon}(y,t)
=\Lambda\epsilon e^{\epsilon}(\frac{\tilde{y}}{\Lambda\epsilon},\frac{\tilde{t}}{\Lambda^2 \epsilon^2})$, thus
\begin{eqnarray*}
\tilde{e}_{\tilde{y}}(\tilde{y},\tilde{t})=e^{\epsilon}_{y}(y,t), \quad
\tilde{e}_{\tilde{t}}(\tilde{y},\tilde{t})=\frac{1}{\Lambda\epsilon}e^{\epsilon}_{t}(y,t), \quad
\tilde{e}_{\tilde{t}\tilde{y}}(\tilde{y},\tilde{t})=\frac{1}{\Lambda^2\epsilon^2}e^{\epsilon}_{ty}(y,t).
\end{eqnarray*}
Using these to substitute, for example the following estimate,
\begin{equation*}
\left|\int_{-\infty}^{+\infty}\tilde{e}_{\tilde{y}}(\tilde{y},\tilde{t})\tilde{h}(\tilde{y})d\tilde{y}\right|
\leq C\tilde{t}^{-\frac{1}{2}}|\tilde{h}|_{L^1},
\end{equation*}
we get
\begin{equation*}
\left|\int_{-\infty}^{+\infty}e^{\epsilon}_{y}(y,t)\frac{1}{\Lambda\epsilon}h(y)\Lambda\epsilon dy\right|
\leq C(\Lambda^2\epsilon^2 t)^{-\frac{1}{2}}|h|_{L^1},
\end{equation*}
and this further simplifies to be
\begin{equation*}
\left|\int_{-\infty}^{+\infty}e^{\epsilon}_{y}(y,t)h(y)dy\right|
\leq C(\Lambda\epsilon)^{-1}t^{-\frac{1}{2}}|h|_{L^1}.
\end{equation*}
Other estimates can be derived similarly.

Now we record these results in the following proposition and lemma which are corresponding versions of Proposition \ref{Pro8.7} and Lemma \ref{Lem8.8}.

\begin{pro} \label{Pro_Ge}
The Green function $G^{\epsilon}$ decomposes as $G^{\epsilon}=E^{\epsilon}+S^{\epsilon}+R^{\epsilon}=E^{\epsilon}+\tilde{G}^{\epsilon}$,
where $\tilde{G}^{\epsilon}=S^{\epsilon}+R^{\epsilon}$,
$E^{\epsilon}(x,t;y)=C\bar{u}^{\epsilon}_{x}(x)e^{\epsilon}(y,t)$ and
$$e^{\epsilon}(y,t):=\frac{1}{\Lambda\epsilon}\left(\mathrm{errfn}\left(\frac{y+f^{\prime}(\bar{u}_{-})t}{\sqrt{4t}}\right)
 -\mathrm{errfn}\left(\frac{y-f^{\prime}(\bar{u}_{-})t}{\sqrt{4t}}\right)\right)$$
then for some $C>0$ independent of $\epsilon$ and all $t>0$,
\begin{equation} \label{9.41}
\left|\int_{-\infty}^{+\infty}\tilde{G}^{\epsilon}(x,t;y)h(y)dy\right|_{L^{p}(x)}\leq Ct^{-\frac{1}{2}(1-\frac{1}{p})}|h|_{L^1},
\end{equation}
\begin{equation} \label{9.42}
\left|\int_{-\infty}^{+\infty}\tilde{G}^{\epsilon}_{y}(x,t;y)h(y)dy\right|_{L^{p}(x)}\leq Ct^{-\frac{1}{2}(1-\frac{1}{p})-\frac{1}{2}}|h|_{L^1},
\end{equation}
\begin{equation} \label{9.43}
\left|\int_{-\infty}^{+\infty}\tilde{G}^{\epsilon}(x,t;y)h(y)dy\right|_{L^{p}(x)}\leq C|h|_{L^p},
\end{equation}
\begin{equation} \label{9.44}
\left|\int_{-\infty}^{+\infty}\tilde{G}^{\epsilon}_{y}(x,t;y)h(y)dy\right|_{L^{p}(x)}\leq Ct^{-\frac{1}{2}}|h|_{L^p}.
\end{equation}
and
\begin{equation} \label{9.45}
\left|\int_{-\infty}^{+\infty}e^{\epsilon}(y,t)h(y)dy\right|\leq C\epsilon^{-1}|h|_{L^1},
\left|\int_{-\infty}^{+\infty}e^{\epsilon}_{y}(y,t)h(y)dy\right|\leq C\epsilon^{-1}t^{-\frac{1}{2}}|h|_{L^1},
\end{equation}
\begin{equation} \label{9.46}
\left|\int_{-\infty}^{+\infty}e^{\epsilon}_{t}(y,t)h(y)dy\right|\leq Ct^{-\frac{1}{2}}|h|_{L^1},
\left|\int_{-\infty}^{+\infty}e^{\epsilon}_{ty}(y,t)h(y)dy\right|\leq Ct^{-1}|h|_{L^1},
\end{equation}
\begin{equation} \label{9.47}
\left|\int_{-\infty}^{+\infty}e^{\epsilon}_{t}(y,t)h(y)dy\right|\leq C|h|_{L^{\infty}},
\left|\int_{-\infty}^{+\infty}e^{\epsilon}_{yt}(y,t)h(y)dy\right|\leq Ct^{-\frac{1}{2}}|h|_{L^{\infty}}.
\end{equation}
\end{pro}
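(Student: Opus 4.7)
The plan is to reduce every estimate to the corresponding $\epsilon$-independent bound already proved in Proposition \ref{Pro8.7} and Lemma \ref{Lem8.8}, via the parabolic rescaling $\tilde{x}=\Lambda\epsilon x$, $\tilde{t}=\Lambda^2\epsilon^2 t$, $\tilde{y}=\Lambda\epsilon y$ that the previous section has already set up. The only thing to check is that every power of $\Lambda\epsilon$ introduced by the change of variables cancels against the prefactors sitting in front of $\tilde{G}^{\epsilon}$ and $e^{\epsilon}$, producing constants independent of $\epsilon$ in \eqref{9.41}--\eqref{9.44}, \eqref{9.46}, \eqref{9.47}, and producing exactly the $\epsilon^{-1}$ loss in \eqref{9.45}.

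The key scaling identities, which follow by direct differentiation from $\tilde{G}^{\epsilon}(x,t;y)=\Lambda\epsilon\, \tilde{G}(\tilde{x},\tilde{t};\tilde{y})$ and from the invariance of the errfn arguments under the rescaling (whence $\tilde{e}(\tilde{y},\tilde{t})=e(y,t)$ so that $e^{\epsilon}(y,t)=\frac{1}{\Lambda\epsilon}\tilde{e}(\tilde{y},\tilde{t})$), are
\begin{align*}
\tilde{G}^{\epsilon}_{y}(x,t;y) &= \Lambda^2\epsilon^2\,\tilde{G}_{\tilde{y}}(\tilde{x},\tilde{t};\tilde{y}),\\
e^{\epsilon}_{y}(y,t) &= \tilde{e}_{\tilde{y}}(\tilde{y},\tilde{t}), \qquad e^{\epsilon}_{t}(y,t) = \Lambda\epsilon\,\tilde{e}_{\tilde{t}}(\tilde{y},\tilde{t}), \qquad e^{\epsilon}_{ty}(y,t) = \Lambda^2\epsilon^2\,\tilde{e}_{\tilde{t}\tilde{y}}(\tilde{y},\tilde{t}).
\end{align*}
Introducing $\tilde{h}(\tilde{y}):=\frac{1}{\Lambda\epsilon}h(\tilde{y}/(\Lambda\epsilon))$, one checks $|\tilde{h}|_{L^1}=|h|_{L^1}$ and $|\tilde{h}|_{L^p}=(\Lambda\epsilon)^{-(1-1/p)}|h|_{L^p}$, while the norm in the output variable satisfies $|F(\tilde{x})|_{L^p(\tilde{x})}=(\Lambda\epsilon)^{1/p}|F(\tilde{x})|_{L^p(x)}$.

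With these identities in hand, each of \eqref{9.41}--\eqref{9.47} is reduced to the corresponding estimate of Proposition \ref{Pro8.7} or Lemma \ref{Lem8.8} applied to $\tilde{G}$ or $\tilde{e}$. For instance, for \eqref{9.41} the factor $\Lambda\epsilon$ inside $\tilde{G}^{\epsilon}$ combines with $dy=d\tilde{y}/(\Lambda\epsilon)$ to leave a pristine $\int\tilde{G}\tilde{h}d\tilde{y}$, the $L^p(x)$--$L^p(\tilde{x})$ conversion contributes $(\Lambda\epsilon)^{-1/p}$, the bound on the rescaled side contributes $\tilde{t}^{-(1-1/p)/2}=(\Lambda\epsilon)^{-(1-1/p)}t^{-(1-1/p)/2}$, and the $L^1$ norms on the right agree; collecting the powers yields exactly $Ct^{-(1-1/p)/2}|h|_{L^1}$ with $C$ independent of $\epsilon$. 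The bounds \eqref{9.42}--\eqref{9.44} follow by the same bookkeeping together with the extra $\Lambda\epsilon$ from the $y$-derivative identity above. For the $e^{\epsilon}$ bounds, inserting the scaling identities and using $\tilde{t}^{-\alpha}=(\Lambda\epsilon)^{-2\alpha}t^{-\alpha}$, one finds that the prefactors collapse to $1$ in \eqref{9.46}--\eqref{9.47} but leave a net $(\Lambda\epsilon)^{-1}$ in \eqref{9.45}, yielding the claimed $\epsilon^{-1}$.

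The only ``obstacle'' is purely combinatorial: keeping track of the distinct powers of $\Lambda\epsilon$ coming from (i) the prefactor in the definition of $\tilde{G}^{\epsilon}$ or $e^{\epsilon}$, (ii) each $y$ or $t$ derivative, (iii) the Jacobians $dy\to d\tilde{y}$ and $dx\to d\tilde{x}$, and (iv) the rescaling of $\tilde{t}^{-\alpha}$. Since the author has already exhibited this balancing in detail for \eqref{9.41} and for one of the $e^{\epsilon}$ bounds, the remaining cases are entirely mechanical, and nothing beyond Proposition \ref{Pro8.7} and Lemma \ref{Lem8.8} is required.
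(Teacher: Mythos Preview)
Your proposal is correct and follows exactly the paper's own argument: both obtain every bound in Proposition~\ref{Pro_Ge} by applying the parabolic rescaling $\tilde{x}=\Lambda\epsilon x$, $\tilde{t}=\Lambda^2\epsilon^2 t$, $\tilde{y}=\Lambda\epsilon y$ to reduce to the $\epsilon$-independent estimates of Proposition~\ref{Pro8.7} and Lemma~\ref{Lem8.8}, then tracking the powers of $\Lambda\epsilon$ coming from the prefactors, the derivatives, the Jacobians, and the time exponent. The paper carries out this bookkeeping explicitly for \eqref{9.41} and for the $e^{\epsilon}_y$ bound in \eqref{9.45} and declares the rest analogous, precisely as you do; your minor verbal slip (``a pristine $\int\tilde G\tilde h\,d\tilde y$'' --- in fact a stray $\Lambda\epsilon$ survives and is cancelled later by the $L^p(x)$--$L^p(\tilde x)$ conversion together with the $\tilde t$ rescaling) does not affect the validity of the computation.
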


\begin{lem} \label{Lem_e}
For some $C>0$ and all $0<\epsilon \leq 1$, and all $t>0$,
 \begin{eqnarray}
 |e^{\epsilon}(\cdot,t)|_{L^{\infty}}&\leq& \frac{C}{\epsilon},\label{9.56}\\
 |e^{\epsilon}_y(\cdot,t)|_{L^{p}}&\leq& \frac{C}{\epsilon}t^{-\frac{1}{2}(1-\frac{1}{p})},\label{9.57}\\
 |e^{\epsilon}_t(\cdot,t)|_{L^{p}}&\leq& Ct^{-\frac{1}{2}(1-\frac{1}{p})},\label{9.58}\\
 |e^{\epsilon}_{ty}(\cdot,t)|_{L^{p}}&\leq& Ct^{-\frac{1}{2}(1-\frac{1}{p})-\frac{1}{2}},\label{9.59}\\
 |e^{\epsilon}_y(y,t)|&\leq&
 \frac{C}{\epsilon}t^{-\frac{1}{2}}\left(e^{-\frac{(-y-t)^2}{Ct}}+e^{-\frac{(-y+t)^2}{Ct}}\right),\label{9.60}\\
 |e^{\epsilon}_t(y,t)|&\leq& Ct^{-\frac{1}{2}}\left(e^{-\frac{(-y-t)^2}{Ct}}+e^{-\frac{(-y+t)^2}{Ct}}\right),\label{9.61}\\
 |e^{\epsilon}_{ty}(y,t)|&\leq& Ct^{-1}\left(e^{-\frac{(-y-t)^2}{Ct}}+e^{-\frac{(-y+t)^2}{Ct}}\right).\label{9.62}
 \end{eqnarray}
\end{lem}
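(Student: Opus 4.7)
The plan is to derive every bound in the lemma by rescaling the corresponding bound in Lemma \ref{Lem8.8}, exactly in the spirit of the rescaling arguments already illustrated for Proposition \ref{Pro_Ge}. The key identity is that, by the very definition of $e^{\epsilon}$ together with the scaling $\tilde{y}=\Lambda\epsilon y$, $\tilde{t}=\Lambda^{2}\epsilon^{2}t$, one has $e^{\epsilon}(y,t)=\frac{1}{\Lambda\epsilon}\tilde{e}(\tilde{y},\tilde{t})$, where $\tilde{e}$ denotes the error-function kernel of Lemma \ref{Lem8.8}. Differentiating yields the scaling relations already recorded in the paper:
\begin{equation*}
e^{\epsilon}_{y}(y,t)=\tilde{e}_{\tilde{y}}(\tilde{y},\tilde{t}),\qquad
e^{\epsilon}_{t}(y,t)=\Lambda\epsilon\,\tilde{e}_{\tilde{t}}(\tilde{y},\tilde{t}),\qquad
e^{\epsilon}_{ty}(y,t)=\Lambda^{2}\epsilon^{2}\,\tilde{e}_{\tilde{t}\tilde{y}}(\tilde{y},\tilde{t}).
\end{equation*}

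First I would dispose of the pointwise bounds \eqref{9.60}--\eqref{9.62}, which follow at once by inserting the above identities into \eqref{8.60}--\eqref{8.62} of Lemma \ref{Lem8.8} and noting that the Gaussian profiles in $\tilde{y}\pm\tilde{t}$ become, after the substitution $\tilde{y}=\Lambda\epsilon y$, $\tilde{t}=\Lambda^{2}\epsilon^{2}t$, the Gaussian profiles $e^{-(-y\pm t)^{2}/Ct}$ appearing on the right-hand sides (up to harmless adjustment of the constant $C$). The only bookkeeping is that the factor $\Lambda\epsilon$ in $e^{\epsilon}_{t}$ exactly cancels the $\tilde{t}^{-1/2}=(\Lambda\epsilon)^{-1}t^{-1/2}$ factor, so no $\epsilon^{-1}$ survives in \eqref{9.61}, whereas for $e^{\epsilon}_{y}$ the factor $\tilde{t}^{-1/2}=(\Lambda\epsilon)^{-1}t^{-1/2}$ produces the $\epsilon^{-1}$ stated in \eqref{9.60}.

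Next, I would pass to the $L^{p}$ bounds by the change of variables $dy=d\tilde{y}/(\Lambda\epsilon)$, which produces a factor $(\Lambda\epsilon)^{-1/p}$ in every $L^{p}$ norm. Applied to \eqref{8.57}--\eqref{8.59}, this gives
\begin{align*}
|e^{\epsilon}_{y}(\cdot,t)|_{L^{p}}&=(\Lambda\epsilon)^{-1/p}|\tilde{e}_{\tilde{y}}(\cdot,\tilde{t})|_{L^{p}}\le C(\Lambda\epsilon)^{-1/p}(\Lambda^{2}\epsilon^{2}t)^{-\frac{1}{2}(1-\frac{1}{p})}=\tfrac{C}{\epsilon}t^{-\frac{1}{2}(1-\frac{1}{p})},\\
|e^{\epsilon}_{t}(\cdot,t)|_{L^{p}}&=(\Lambda\epsilon)^{1-1/p}|\tilde{e}_{\tilde{t}}(\cdot,\tilde{t})|_{L^{p}}\le C(\Lambda\epsilon)^{1-1/p}(\Lambda^{2}\epsilon^{2}t)^{-\frac{1}{2}(1-\frac{1}{p})}=Ct^{-\frac{1}{2}(1-\frac{1}{p})},\\
|e^{\epsilon}_{ty}(\cdot,t)|_{L^{p}}&=(\Lambda\epsilon)^{2-1/p}|\tilde{e}_{\tilde{t}\tilde{y}}(\cdot,\tilde{t})|_{L^{p}}\le C(\Lambda\epsilon)^{2-1/p}(\Lambda^{2}\epsilon^{2}t)^{-\frac{1}{2}(1-\frac{1}{p})-\frac{1}{2}}=Ct^{-\frac{1}{2}(1-\frac{1}{p})-\frac{1}{2}},
\end{align*}
yielding \eqref{9.57}--\eqref{9.59}. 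The $L^{\infty}$ bound \eqref{9.56} is the simplest: $|e^{\epsilon}|_{L^{\infty}}=(\Lambda\epsilon)^{-1}|\tilde{e}|_{L^{\infty}}\le C/\epsilon$ by \eqref{8.56}.

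There is no real obstacle here; the argument is entirely dimensional, with every power of $\Lambda\epsilon$ dictated by the chain rule and by the Jacobian of the change of variables. The only delicate point, which I would emphasize in the write-up, is the selective appearance of $\epsilon^{-1}$: it appears in $e^{\epsilon}$ and $e^{\epsilon}_{y}$ because the ``excited'' translation kernel $e^{\epsilon}$ itself carries the factor $(\Lambda\epsilon)^{-1}$ (reflecting the fact that a unit mass perturbation may shift the small-amplitude profile by $\mathcal{O}(\epsilon^{-1})$), but disappears from $e^{\epsilon}_{t}$ and $e^{\epsilon}_{ty}$, where the extra $\partial_{t}$ eats a compensating $\Lambda\epsilon$. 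This is the key structural feature that, as noted in section \ref{sectionstability} and will be needed in the stability argument, allows the third (shift) estimate in Theorem \ref{mainthmsmall} to degrade by $\epsilon^{-1}$ while the other three estimates remain $\epsilon$-uniform.
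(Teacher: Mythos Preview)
Your proposal is correct and follows exactly the paper's approach: the paper states Lemma~\ref{Lem_e} without a separate proof, having already illustrated the rescaling technique (via the relations $\tilde{e}(\tilde y,\tilde t)=\Lambda\epsilon\, e^{\epsilon}(y,t)$ and its derivatives, together with the change of variables $dy=(\Lambda\epsilon)^{-1}d\tilde y$) on representative estimates just before Proposition~\ref{Pro_Ge}. Your write-up simply makes explicit the bookkeeping of $\Lambda\epsilon$ powers for each of \eqref{9.56}--\eqref{9.62}, which is precisely what the paper leaves to the reader.
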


\section{Nonlinear stability in the small amplitude case} \label{sectionstabilitysmall}

Having the estimates in Proposition \ref{Pro_Ge} and Lemma \ref{Lem_e}, we are now ready to derive the nonlinear stability results of the small amplitude case.
Similarly, define the \textit{perturbation} as $u(x,t):=\tilde{u}(x+\alpha(t),t)-\bar{u}^{\epsilon}(x).$
Following some similar computation as the $\epsilon$-independent case, we have the following representations,
\begin{equation*}
\begin{aligned}
u(x,t)
=\int_{-\infty}^{+\infty}\tilde{G}^{\epsilon}(x,t;y)u_{0}(y)dy
 -\int_{0}^{t}\int_{-\infty}^{+\infty}\tilde{G}^{\epsilon}_{y}(x,t-s;y)\left(N(u)(y,s)+\dot{\alpha}(s)u(y,s)\right)dyds
\end{aligned}
\end{equation*}

\begin{equation*}
\begin{aligned}
\alpha(t)
=-\int_{-\infty}^{+\infty}e^{\epsilon}(y,t)u_0(y)dy
 +\int_{0}^{t}\int_{-\infty}^{+\infty}e^{\epsilon}_y(y,t-s)(N(u)(y,s)+\dot{\alpha}(s)u(y,s))dyds
\end{aligned}
\end{equation*}

\begin{equation*}
\begin{aligned}
\dot{\alpha}(t)
=-\int_{-\infty}^{+\infty}e^{\epsilon}_t(y,t)u_0(y)dy
 \quad +\int_{0}^{t}\int_{-\infty}^{+\infty}e^{\epsilon}_{yt}(y,t-s)(N(u)(y,s)+\dot{\alpha}(s)u(y,s))dyds.
\end{aligned}
\end{equation*}

We are ready to prove Theorem \ref{mainthmsmall} now.

\begin{proof}[Proof of Theorem \ref{mainthmsmall}]
The proof of the first two bounds follows exactly as the proof of the fixed-amplitude($\epsilon$-independent) case. Since the bounds on $\tilde{G}^{\epsilon}$, $\tilde{G}^{\epsilon}_{y}$,
$e^{\epsilon}_{t}$ and $e^{\epsilon}_{yt}$ in Proposition \ref{Pro_Ge} are exactly the same as the bounds on $\tilde{G}$, $\tilde{G}_{y}$, $e_{t}$ and $e_{yt}$ in Proposition \ref{Pro8.7}
in the fixed-amplitude case. Using the bounds on $e^{\epsilon}$ and $e^{\epsilon}_{y}$ we obtain the third bound. It is $\epsilon^{-1}$ times the corresponding bound for $|\alpha(t)|$ because the the bounds on $e^{\epsilon}$ and $e^{\epsilon}_{y}$ are all $\epsilon^{-1}$ times the corresponding bounds on $e$ and $e_{y}$ in the fixed-amplitude case.

To derive the fourth bound, we note that $\tilde{u}(x,t)-\bar{u}^{\epsilon}(x)=u(x-\alpha(t),t)-(\bar{u}^{\epsilon}(x)-\bar{u}^{\epsilon}(x-\alpha(t)))$, so that
$\tilde{u}(\cdot,t)-\bar{u}^{\epsilon}$ is controlled by the sum of $|u|$ and $|\bar{u}^{\epsilon}(x)-\bar{u}^{\epsilon}(x-\alpha(t))|$. By monotonicity of scalar shock profiles of the
first-order scalar profile ODE, $\bar{u}^{\epsilon}(x)-\bar{u}^{\epsilon}(x-\alpha(t))$ has one sign, hence
\begin{equation*}
\left|\bar{u}^{\epsilon}(x)-\bar{u}^{\epsilon}(x-\alpha(t))\right|_{L^1}
=\left|\int_{-\infty}^{+\infty}(\bar{u}^{\epsilon}(x)-\bar{u}^{\epsilon}(x-\alpha(t)))dx\right|=|\alpha(t)||\bar{u}^{\epsilon}_{+}-\bar{u}^{\epsilon}_{-}|,
\end{equation*}
and thus by bounds on $|\alpha(t)|$,
\begin{equation*}
\left|\bar{u}^{\epsilon}(x)-\bar{u}^{\epsilon}(x-\alpha(t))\right|_{L^1}
=2\epsilon |\alpha(t)| \leq 2CE_0,
\end{equation*}
where $E_0=|\tilde{u}-\bar{u}^{\epsilon}|_{L^1\cap L^{\infty}}|_{t=0}$.
Similarly, by the Mean Value Theorem,
\begin{equation*}
\left|\bar{u}^{\epsilon}(x)-\bar{u}^{\epsilon}(x-\alpha(t))\right|
\leq |\alpha(t)||(\bar{u}^{\epsilon})^{\prime}|_{L^{\infty}}\leq (\frac{CE_0}{\epsilon})(\epsilon^2)=CE_0\epsilon,
\end{equation*}
here we used the asymptotic $(\bar{u}^{\epsilon})^{\prime}\sim \epsilon^2 e^{-\theta \epsilon |x|}$. Thus,
$|\bar{u}^{\epsilon}(x)-\bar{u}^{\epsilon}(x-\alpha(t))|_{L^1 \cap L^{\infty}}\leq CE_0$, and so $|\tilde{u}(x,t)-\bar{u}^{\epsilon}(x)|_{L^1 \cup L^{\infty}}\leq CE_0$ for all
$t\geq 0$, for $E_0$ sufficiently small. This verifies the fourth inequality, yielding nonlinear stability.
\end{proof}

This completes the proof, verifying nonlinear stability.

\bibliographystyle{amsalpha}
\bibliography{books}

\end{document}